\newcommand{\mc}{\mathcal}
\newcommand{\mb}{\mathbb}
\newcommand{\ms}{\mathscr}
\def\cd{{\centerdot}}
\def\defeq{{\vcentcolon=}}
\def\proj{{\mathcal{P}}}
\DeclareMathOperator\ke{ker}
\DeclareMathOperator\im{im}
\newcommand{\binr}[2]{\ar@<.5ex>[r]^{#1} \ar@<-.5ex>[r]_{#2}}
\newcommand{\bind}[2]{\ar@<.5ex>[d]^{#1} \ar@<-.5ex>[d]_{#2}}
\def\id{{\rm id}}
\def\bF{{\mathbb{F}}}   
\def\bG{{\mathbb{G}}}   
\def\bQ{{\mathbb{Q}}}   
\def\bZ{{\mathbb{Z}}}   
\def\bN{{\mathbb{N}}}   
\def\cM{{\mathcal M}}
\def\cO{{\mathcal O}}
\newcommand{\longto}{\longrightarrow}
\DeclareMathOperator\End{End}
\DeclareMathOperator\GL{GL}
\DeclareMathOperator\Hom{Hom}
\DeclareMathOperator\Mat{Mat}
\DeclareMathOperator\Mod{Mod}
\DeclareMathOperator\Pol{Pol}
\DeclareMathOperator\Sym{Sym}
\DeclareMathOperator\cHom{\mathcal{H}om}
\DeclareMathOperator\uEnd{\underline{End}}
\DeclareMathOperator\uHom{\underline{Hom}}
\DeclareMathOperator\uSpec{\underline{Spec}}
\DeclareMathOperator\uP{\underline{\proj}}
\newtheorem{thm}{Theorem}[section]
\newtheorem*{thm*}{Theorem}
\newtheorem{thmdef}[thm]{Theorem / Definition}
\newtheorem{cor}[thm]{Corollary}
\newtheorem{prop}[thm]{Proposition}
\newtheorem{lem}[thm]{Lemma}
\theoremstyle{definition}
\newtheorem{defn}[thm]{Definition}
\newtheorem*{defn*}{Definition}
\newtheorem{ex}[thm]{Example}
\newtheorem{mainex}[thm]{Main Example}
\newtheorem{rem}[thm]{Remark}
\theoremstyle{remark}
\let\c@equation\c@thm
\numberwithin{equation}{section}
\title[Exterior power operations via binary complexes]
{Exterior power operations on higher $K$-groups via binary complexes}
\author{Tom Harris}
\address{University Printing House\\ Shaftesbury Avenue\\ Cambridge CB2 8BS\\ United Kingdom}
\email{tharris@cambridge.org}
\author{Bernhard K\"{o}ck}
\address{Mathematical Sciences\\ University of Southampton\\ Highfield\\ Southampton SO17 1BJ\\ United Kingdom}
\email{b.koeck@soton.ac.uk}
\author{Lenny Taelman}
\address{KdVI -- Universiteit van Amsterdam\\ P.O. Box 94248\\ 1090 GE Amsterdam\\ the Netherlands}
\email{l.d.j.taelman@uva.nl}
\date{\today}
\begin{document}

\begin{abstract}
We use Grayson's binary multicomplex presentation of algebraic $K$-theory to give a new construction of exterior
power operations on the higher $K$-groups of a (quasi-compact) scheme.
We show that these operations satisfy the axioms of a $\lambda$-ring, including the product and composition laws.
To prove the composition law we show that the Grothendieck group of the exact category of integral polynomial
functors is the universal $\lambda$-ring on one generator.
\end{abstract}

\subjclass[2010]{13D15, 14F99, 19D99, 19E08, 20G05}

\maketitle

\section*{Introduction}

Exterior powers of vector bundles over a scheme $X$ endow its Grothendieck group $K_0(X)$ with a
family of operations $\lambda^r: K_0(X) \rightarrow K_0(X)$, $r=0,1, \ldots$ These
$\lambda$-operations allow us to define Adams operations and the $\gamma$-filtration on $K_0(X)$ and are, more generally, at the
heart of Grothendieck's Riemann-Roch theory (see \cite{FultonLang}). This fundamental structure has
been extended to the higher $K$-groups $K_n(X)$, $n \ge 0$, using a variety of sophisticated
approaches and in various degrees of generality, by Kratzer \cite{Kratzer}, Hiller \cite{Hiller},
Grayson \cite{GraysonExterior}, Nenashev \cite{NenLambda} and Levine \cite{Levine}, and has been
most profoundly studied and applied in Soul\'e's seminal paper \cite{Soule}. Common to all these
constructions is that they use homotopy theory.

\medskip

In this paper we give a purely algebraic construction of the $\lambda$-operations on the higher
$K$-groups of any quasi-compact scheme $X$. Our construction is explicit in the following sense: in
his surprising paper \cite{Gray1}, Grayson has given explicit generators and relations for
$K_n(X)$, and our construction describes explicit (albeit intricate) images of these generators
under the $\lambda$-operations. Within the purely algebraic context of this paper, we prove moreover that our $\lambda$-operations satisfy the usual
axioms, including the product and composition laws. In a forthcoming paper we address the problem of matching up our $\lambda$-operations with Hiller's.

\medskip

To describe our results in more precise terms, we now recall the definition of a $\lambda$-ring.

\begin{defn*}
 A {\em pre-$\lambda$-ring} is a commutative unital ring $K$ together with maps $\lambda^r \colon K \rightarrow K$, $r \ge 1$, satisfying $\lambda^1(x) =x$ and the following axiom for all $x$, $y \in K$:
 \begin{enumerate}
  \item $\lambda^r (x + y) = \lambda^r(x) + \sum_{i=1}^{r-1} \lambda^{r-i}(x)\lambda^i (y)+ \lambda^r(y)$.
 \end{enumerate}
A  {\em $\lambda$-ring} $K$ is a pre-$\lambda$-ring satisfying the further axioms
 \begin{enumerate}
 \setcounter{enumi}{1}
  \item $\lambda^r(xy) = P_r (\lambda^1 (x), \dots, \lambda^r (x), \lambda^1 (y), \dots, \lambda^r (y))$
  \item $\lambda^r (\lambda^s (x)) = P_{r,s} (\lambda^1 (x) , \dots, \lambda^{rs} (x))$,
 \end{enumerate}
where $P_r$ and $P_{r,s}$ ($r$, $s > 0$) are certain universal integral polynomials (defined in such a way so the axioms (2) and (3) hold in every pre-$\lambda$-ring whose additive group is generated by elements $l$ with $\lambda^r (l)=0$ for all $r>1$ and in which products of elements of this type are again of this type; for details see \cite{FultonLang}).
\end{defn*}

Probably the most prominent example of a $\lambda$-ring is $K_0(X)$ (see \cite{FultonLang}). The object of this paper is to make $K_* (X)= \oplus_{n \ge 0} \; K_n(X)$ into a $\lambda$-ring.

\medskip

For each $n \ge 0$, Grayson \cite{Gray1} associates to an exact category $\mc{P}$ the exact category
$(B_\mathrm{b}^\mathrm{q})^n \mc{P}$ of so-called $n$-dimensional bounded acyclic binary complexes,
and proves that $K_n (\mc{P})$ is isomorphic to a relatively simple-to-describe quotient of the
Grothendieck group $K_0\left((B_\mathrm{b}^\mathrm{q})^n \mc{P}\right)$ (see
section~\ref{Binary multicomplexes and algebraic $K$-theory} for a detailed review of Grayson's
construction). Using the Dold--Puppe construction \cite{DoldPuppe}, we inductively construct functors
\[\Lambda_n^r: (B_\mathrm{b}^\mathrm{q})^n \mc{P}(X) \rightarrow (B_\mathrm{b}^\mathrm{q})^n \mc{P}(X)\]
for all $r, n >0$ from the usual exterior power functors
$\Lambda^r: \mc{P}(X) \rightarrow \mc{P}(X)$, $r \ge 0$, on the category $\mc{P}(X)$ of vector
bundles on $X$.

\medskip

The following theorems are the main results of this paper.

\medskip

{\bf Theorem A} [Theorem~\ref{main theorem for schemes}].
 {\em The functors
 $\Lambda^r_n$
 induce well-defined homomorphisms
 $\lambda^r \colon K_n (X) \rightarrow K_n (X)$
 for $r,n > 0$. }

\medskip

The tensor product induces the multiplication in the Grothendieck ring $K_0(X)$ and also an action of $K_0(X)$ on the higher $K$-groups $K_n(X)$. In particular, $K_* (X)= \oplus_{n \ge 0} \; K_n(X)$ carries the structure of a unital commutative ring in which the product of any two elements in $\oplus_{n \ge 1}\, K_n(X)$ is defined to be zero. Note that, if $n >0$, axiom~(1) for $x,y \in K_n(X)$ then follows from $\lambda^r:K_n(X) \rightarrow K_n(X)$ being a homomorphism (Theorem~A). Furthermore, the formula in axiom~(1) can be used to extend our
operations $\lambda^r \colon K_n (X) \rightarrow K_n (X)$, $n \ge 0$, to a
pre-$\lambda$-ring structure on $K_* (X)$.

\medskip

{\bf Theorem B} [Theorems~\ref{lambda axiom 2}, and \ref{lambda axiom 3}].
{\em The pre-$\lambda$-ring $K_*(X)$ is a $\lambda$-ring.}

\medskip

The first half of the paper is devoted to the construction of the
exterior power functors $\Lambda^r_n$.
Let $C_{\rm{b}}\mc{P}(X)$ denote the category of bounded complexes in $\mc{P}(X)$.
We use the Dold--Kan correspondence (reviewed along with the other necessary homological
preliminaries in section \ref{prelim}) to obtain a chain-homotopy invariant
functor $\Lambda^r_1 \colon C_{\rm{b}}\mc{P}(X) \rightarrow C_{\rm{b}}\mc{P}(X)$
for each $r > 0$; if $X$ is affine, then the bounded acyclic complexes in $\mc{P}(X)$
are precisely the contractible ones, so we obtain an endofunctor on the category of
bounded acyclic chain complexes in $\mc{P}(X)$. By generalising and iterating this procedure
over complexes of complexes we get the desired functors
$\Lambda^r_n \colon (B^{\rm{q}}_{\rm{b}})^n \mc{P}(X) \rightarrow (B^{\rm{q}}_{\rm{b}})^n \mc{P}(X)$.
This material is the subject of sections
\ref{Operations on acyclic complexes} and \ref{Operations on binary multicomplexes}.

\medskip

In the rather long section \ref{Simplicial tensor products} we construct a `simplicial tensor
product' $\otimes_{\Delta, n}$ on $(B^{\rm{q}}_{\rm{b}})^n \mc{P}(X)$. In defining exterior powers
on $K_0(X)$ we obtain from a short exact sequence of vector bundles
$0 \rightarrow V' \rightarrow V \rightarrow V'' \rightarrow 0$ a filtration of $\Lambda^r (V)$
whose successive quotients are $\Lambda^{r-i}(V') \otimes \Lambda^i (V'')$. We use our simplicial
tensor product of binary multicomplexes to obtain similar statements for short exact sequences in $(B_\mathrm{b}^\mathrm{q})^n \mc{P}(X)$;
our tensor product $\otimes_{\Delta, n}$ is to $\otimes$ as the exterior powers $\Lambda^r_n$ are to
$\Lambda^r$. The main result of the section (Proposition~\ref{tensor product vanishes}) is that the product induced by $\otimes_{\Delta, n}$
on $K_n(X)$ vanishes.

\medskip

In section \ref{proof of main theorem} we pass our exterior powers from the affine case
to general (quasi-compact) schemes and show that they induce well-defined operations
$\lambda^r \colon K_n(X) \rightarrow K_n(X)$.
As the product on $K_n(X)$ that is compatible with these operations is the zero product (by Proposition~\ref{tensor product vanishes}),
it follows that the $\lambda^r$ are group homomorphisms.

\medskip

In section \ref{Lambda axiom 4} we
show that that the resulting pre-$\lambda$-ring $K_*(X)$ satisfies the $\lambda$-ring axiom~(2) concerning products.

\medskip

The final $\lambda$-ring axiom (3) is proved in section \ref{Lambda final}. While the usual geometric splitting principle suffices to prove axiom~(2) for $K_*(X)$ (see section~\ref{Lambda axiom 4}) and both axioms~(2) and~(3) for $K_0(X)$, there seems to be no way of extending that approach to prove axiom~(3) for $K_*(X)$. We will rather proceed as follows.  As
$K_0(X)$ is a $\lambda$-ring, there exist short exact sequences in $\mc{P}(X)$ that prove the
relation $\lambda^r (\lambda^s (x)) = P_{r,s} (\lambda^1 (x) , \dots, \lambda^{rs} (x))$ in
$K_0(X)$ when $x$ is the class of a vector bundle $V$ on $X$. We will see (see
subsection~\ref{Proof of final axiom}) that if in fact these short exact sequences exist
functorially in $V$, then we can inductively prove the existence of short exact sequences in
$(B_\mathrm{b}^\mathrm{q})^n\mc{P}(X)$ that prove the relation above when $x$ is the class of an
object in $(B^\mathrm{q}_\mathrm{b})^n \mc{P}(X)$; in other words, we have then proved axiom~(3)
for $K_*(X)$.

\medskip

We are therefore reduced to showing the existence of such short exact sequences of
functors in $V$. This problem may be seen as a weak variant of the famous plethysm problem (see Remark~\ref{remark}). The
crucial insight now is that it becomes attackable when we also require these functors to be
polynomial (see Definition~\ref{PolynomialFunctor}). On the one hand, this requirement guarantees the existence of appropriate base change functors and hence reduces the
problem to $X=\operatorname{Spec}(\bZ)$ (see subsections~\ref{PolynomialFunctor} and~\ref{Proof of final axiom}). On the other hand, it makes the computation of the
corresponding Grothendieck group of functors feasible; this is the content of the following theorem,
which we highlight as it may be of independent interest.

\medskip

It is well-known that there exists a unique $\lambda$-ring structure on the ring $\bZ[s_1, s_2,\ldots]$ of integral polynomials in infinite variables such that $\lambda^r(s_1) = s_r$ for all $r$. Furthermore, let $\Pol_{< \infty}(\bZ)$ denote the category of polynomial functors over $\bZ$
of bounded degree (whose Grothendieck group is easily seen to be a pre-$\lambda$-ring).
\begin{thm*}[Theorem \ref{Computing Grothendieck group of PolFun}]
 The ring homomorphism
\[
    \bZ[s_1,s_2,\ldots ] \to K_0(\Pol_{< \infty}(\bZ)),\quad s_i \mapsto [\Lambda^i]
\]
is an isomorphism of pre-$\lambda$-rings.
\end{thm*}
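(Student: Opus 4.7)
\emph{Strategy.} The plan is to split both sides by homogeneity degree and compare them via the character morphism to the ring of symmetric functions $\Lambda$. Write $\phi$ for the given ring homomorphism. Every $F \in \Pol_{<\infty}(\bZ)$ decomposes canonically as a finite direct sum $F = \bigoplus_n F_n$ of homogeneous pieces $F_n$ of degree $n$, the $F_n$ arising as weight spaces of the natural $\bG_m$-action on $F(V \otimes_\bZ \bZ[t,t^{-1}])$. Correspondingly $\bZ[s_1,s_2,\ldots]$ is graded by $\deg s_i = i$, and $\phi$ respects this grading. The problem reduces to showing that for each $n \ge 0$ the $\bZ$-module map
\[
    \bZ[s_1,s_2,\ldots]_n \to K_0(\Pol_n(\bZ)), \qquad s_\lambda \mapsto [\Lambda^\lambda]
\]
is an isomorphism, where $\lambda$ runs over partitions of $n$, $s_\lambda = \prod_i s_{\lambda_i}$, and $\Lambda^\lambda = \bigotimes_i \Lambda^{\lambda_i}$.

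\emph{Character morphism.} Evaluation on $\bZ^m$ with diagonal $\bG_m^m$-action sends $F \in \Pol_n(\bZ)$ to a symmetric polynomial of degree $n$ in $x_1,\ldots,x_m$; assembling for varying $m$ yields a pre-$\lambda$-ring homomorphism $\chi \colon K_0(\Pol_{<\infty}(\bZ)) \to \Lambda$ sending $[\Lambda^i] \mapsto e_i$ (the compatibility with $\lambda^r$ being visible on the level of eigenvalues). The composition $\chi \circ \phi$ is the map $s_i \mapsto e_i$, which is the classical isomorphism from the fundamental theorem of symmetric polynomials. Hence $\phi$ is injective. To conclude that $\phi$ is a ring isomorphism (and, by transport along $\chi$, a pre-$\lambda$-ring isomorphism), it is sufficient to show that $\chi$ itself is injective on each $K_0(\Pol_n(\bZ))$.

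\emph{Injectivity of $\chi$ and main obstacle.} For this I would invoke the classical equivalence, via evaluation on $\bZ^m$ for $m \ge n$, between $\Pol_n(\bZ)$ and finitely generated modules over the integral Schur algebra $S_\bZ(m,n) = \End_{\bZ\Sigma_n}((\bZ^m)^{\otimes n})$, which is quasi-hereditary over $\bZ$. The Weyl modules $\Delta(\lambda)$ for $\lambda \vdash n$ then span $K_0(\Pol_n(\bZ))$ as a $\bZ$-module, and their characters are the Schur polynomials $s_\lambda \in \Lambda$, which are $\bZ$-linearly independent. Hence $\chi|_{K_0(\Pol_n(\bZ))}$ is injective, so $\phi$ is a ring isomorphism; the pre-$\lambda$-ring compatibility then follows formally, since $\chi \circ \phi$ is the standard $\lambda$-ring isomorphism $\bZ[s_1,\ldots] \xrightarrow{\sim} \Lambda$ and $\chi$ is an injective pre-$\lambda$-ring homomorphism. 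The main obstacle in this approach is this integral representation-theoretic input --- namely the quasi-heredity of the Schur algebra over $\bZ$ (Akin--Buchsbaum--Weyman, Carter--Lusztig, Donkin), which is what guarantees that every class in $K_0(\Pol_n(\bZ))$ is an integer combination of Weyl classes whose characters are linearly independent; everything else reduces either to standard symmetric function theory or to a straightforward weight decomposition.
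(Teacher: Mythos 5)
Your proposal is correct in outline and takes a genuinely different route from the paper, so a comparison is in order. Both proofs pass through the identification (the paper's Theorem~\ref{PolynomialSchur}) of $\Pol_d(\bZ)$ with $\bZ$-projective modules over the integral Schur algebra $\Gamma^d\Mat(n,\bZ)$. From there the paths diverge. The paper avoids any structural input about the integral Schur algebra beyond $\bZ$-freeness: it shows $K_0(\cM_\mathrm{p}(\bZ)) \cong K_0(\cM(\bZ))$ via two-term $\bZ$-torsion-free resolutions (Lemma~\ref{lemma:projective-resolution}), then transports Serre's $\GL_n$ localization-sequence argument to prove $K_0(\cM(\bZ)) \cong K_0(\cM(\bQ))$, and finally invokes Green's computation over a field (Theorem~\ref{thm:K0-over-field}). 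You instead stay over $\bZ$ and argue via the character homomorphism $\chi$ to symmetric functions: $\chi\circ\phi$ is the classical isomorphism $s_i\mapsto e_i$, so it suffices that $\chi$ be injective, which you deduce from quasi-heredity of the integral Schur algebra (Weyl classes span $K_0$, and their characters $s_\lambda$ are $\bZ$-independent). Your pre-$\lambda$-ring conclusion via ``transport along the injective $\chi$'' is also clean and matches what the paper glosses as ``obvious''. What each buys: the paper's route needs only Green's result over a field and the comparatively soft Serre machinery (projective resolutions, localization, decomposition maps), keeping the integral representation theory at arm's length; your route is more direct but front-loads a heavier integral input.

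Two small points to tighten. First, the equivalence with the Schur algebra is with $\cM_\mathrm{p}(\bZ)$ (finitely generated, $\bZ$-\emph{projective} modules), not all finitely generated modules; you need this to make sense of $\chi$ and to keep the Weyl modules in the right exact category. Second, the step ``quasi-heredity $\Rightarrow$ Weyl classes span $K_0(\Pol_n(\bZ))$'' hides a further argument: one needs that $\Gamma^d\Mat(n,\bZ)$ has finite global dimension (so that, by the resolution theorem, $K_0(\cM_\mathrm{p}(\bZ))$ is generated by genuine projectives) and that projectives admit $\Delta$-filtrations. Both facts do follow from the quasi-hereditary structure over $\bZ$, and you correctly flag this as the main obstacle with the right references, but as stated the implication is a bit more than a one-liner; it is precisely the work that the paper's Serre-style reduction to $\bQ$ and $\bF_\ell$ is designed to circumvent.
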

This theorem obviously implies that the right-hand side is a $\lambda$-ring as well and hence that the short exact sequences of functors postulated above indeed exist. After interpreting polynomial functors as modules over certain Schur algebras following Krause \cite[Section~8.2]{Krause} (see subsection~\ref{SchurAlgebra}), we will prove the theorem by following Serre's computation \cite{Serre} of the Grothendieck group of representations of the group scheme $\mathrm{GL}_{n,\bZ}$ (see subsection~\ref{Grothendieck groups over Z}). A crucial ingredient here is Green's computation \cite{Gr} of the Grothendieck group of polynomial functors over a field.

\medskip

The fundamental idea of proving $\lambda$-ring axioms for Grothendieck groups of complexes via the corresponding axioms for a Grothendieck group of appropriate functors is also sketched in an exchange of letters between Deligne and Grothendieck \cite{DG67}. Both their correspondence and the introduction of Serre's paper \cite{Serre} already allude to a role of Serre's result for $\lambda$-operations.

\medskip

In a forthcoming paper we will complement the somewhat intricate constructions of this paper with simpler formulae that (help to) compute our $\lambda$-operations in certain cases. For instance, we will give formulae for our $\lambda$-operations when applied to $K_1$-groups of rings or to external products $K_m(X) \times K_n(X) \rightarrow K_{m+n}(X)$.

\medskip

{\bf Acknowledgements}. The first two authors thank Dan Grayson and Marco Schlichting for stimulating discussions and emails about exterior powers of binary complexes and for valuable comments and suggestions relating to earlier drafts. The third author wishes to thank Wilberd van der Kallen for helpful elucidations about polynomial functors and integral representation theory. The second author moreover thanks Christophe Soul\'e for sending a photocopy of the correspondence \cite{DG67} upon posting this paper on arXiv. Finally all three authors thank the referee for carefully reading the paper and for many helpful suggestions.

\section{Binary multicomplexes and algebraic \texorpdfstring{$K$}{K}-theory}
\label{Binary multicomplexes and algebraic $K$-theory}

In this section we review the description of algebraic $K$-groups in terms of binary complexes
given in \cite{Gray1}. We also prove a simple lemma about shifted binary complexes to justify a
slight modification of Grayson's description. The lemma is also useful for computations.

\medskip

Recall that an \emph{exact category} in the sense of Quillen \cite{Quil1} is an additive category
with a distinguished class of `short exact sequences' that behave like the short exact sequences of
an abelian category. A small exact category $\mc{N}$ may also be thought of
as a full subcategory of an ambient abelian category $\mc{A}$ such that $\mc{N}$ is closed under
extensions in $\mc{A}$.
\footnote{This is the \emph{Gabriel--Quillen embedding theorem \cite{Thomason}, A.7.1, A.7.16.}}
The category of chain complexes in an exact category is again an exact
category, with short exact sequences defined to be those sequences of chain maps that are short
exact in each degree. In this paper we consider only complexes that are concentrated in
non-negative degrees, those with an underlying $\bZ_{\ge 0}$-graded object. We denote this
category of chain complexes in $\mc{N}$ by $C\mc{N}$. A chain complex is bounded if it has only
finitely many non-zero objects. The exact subcategory of $C\mc{N}$ of bounded chain complexes is
denoted $C_{\rm{b}}\mc{N}$. An \emph{acyclic} complex in an exact category is a chain complex
$N_{\cd}$ in $\mc{N}$ whose differentials $d_i \colon N_i \rightarrow N_{i-1}$ factor as
$N_i \rightarrow Z_i \rightarrow N_{i-1}$ (with $Z_i$ in $\mc{N}$), such that each
$0 \rightarrow Z_{i+1} \rightarrow N_i \rightarrow Z_i \rightarrow 0$
is a short exact sequence in $\mc{N}$.
\footnote{\label{fn} This is not in general the same thing as being a long exact sequence in the
ambient abelian category $\mc{A}$. However in this paper we work only with
\emph{idempotent complete} exact categories, in which case the two notions coincide.
See \cite{Gray1} and \cite{Thomason} A.9.2.}
The full subcategories of acyclic complexes in $C\mc{N}$ and $C_{\rm{b}}\mc{N}$ are also exact, and are
denoted $C^{\rm{q}}\mc{N}$ and $C^{\rm{q}}_{\rm{b}}\mc{N}$.\\
Since each of these categories of complexes is also an exact category, we can iterate their
construction to define \emph{$n$-dimensional multicomplexes} in $\mc{N}$. A $1$-dimensional
multicomplex in $\mc{N}$ is simply a chain complex, an object of $C\mc{N}$. An $n+1$-dimensional
multicomplex in $\mc{N}$ is a chain complex in the exact category $C^n\mc{N}$ of
$n$-dimensional multicomplexes in $\mc{N}$. We define categories of bounded and/or acyclic
multicomplexes, $(C_{\rm{b}})^n\mc{N}$, $(C^{\rm{q}})^n\mc{N}$ and
$(C^{\rm{q}}_{\rm{b}})^n\mc{N}$ analogously.
With these notions in place, we can define binary complexes and multicomplexes.

\begin{defn}[]\
\begin{enumerate}
 \item A \emph{binary complex} in an exact category $\mc{N}$ is a triple $(N_{\cd},d,\tilde{d})$
 consisting of: a $\bZ_{\ge 0}$-graded collection of objects of $\mc{N}$ together with two
 differentials $d$ and $d'$ such that $(N_{\cd},d)$ and $(N_{\cd},\tilde{d})$ are chain complexes
 in $\mc{N}$. A binary complex can be regarded as pair of objects of $C\mc{N}$ that have the same
 underlying graded object. A morphism of binary complexes is a degree 0 map between these
 underlying objects that commutes with both differentials. The category of binary complexes in
 $\mc{N}$ is denoted $B\mc{N}$. This is an exact category in the same way that $C\mc{N}$ is.
 \item A \emph{bounded acyclic} binary complex in $\mc{N}$ is a binary complex such that the chain
 complexes $(N_{\cd},d)$ and $(N_{\cd},\tilde{d})$ are bounded and acyclic. The category of bounded
 acyclic binary complexes in $\mc{N}$ is denoted $B_{\rm{b}}^{\rm{q}}\mc{N}$. It is an exact
 subcategory of $B\mc{N}$.
 \item An \emph{$n$-dimensional binary multicomplex} is an object of the exact category
 $B^n\mc{N} = B\cdots B\mc{N}$ (defined in the same way as $C^n\mc{N}$). An $n$-dimensional
 bounded acyclic binary multicomplex is an object of $(B^{\rm{q}}_{\rm{b}})^n\mc{N}$.
\end{enumerate}
\end{defn}

\begin{rem}
\label{commute}
 A (bounded acyclic) binary multicomplex $N_{\cd}$ of dimension $n$ is equivalent to the following
 data: a (bounded) $\bZ_{\ge 0}^n$-graded collection of objects of $\mc{N}$ equipped with
 \textit{two} (acyclic) differentials, denoted $d^i$ and $\tilde{d}^i$, in each direction
 $1 \le i \le n$ subject to the following commutativity requirements:
 \begin{enumerate}
  \item $d^id^j = d^jd^i$
  \item $d^i\tilde{d}^j = \tilde{d}^jd^i$
  \item $\tilde{d}^id^j = d^j\tilde{d}^i$
  \item $\tilde{d}^i\tilde{d}^j = \tilde{d}^j\tilde{d}^i$
 \end{enumerate}
 whenever $i \neq j$.\\
 Another way to look at these commutativity restraints is that the various subsets of the
 differentials form (non-binary) multicomplexes: for each $i = 1, \dots, n$, choose $d^i$ or
 $\tilde{d}^i$, and consider the object that has the same underlying $\bZ^n$-graded object
 as $N_{\cd}$, but now has \textit{one} acyclic differential in each direction $i$, given
 by $d^i$ or $\tilde{d}^i$, depending on our choice. For each of the $2^n$ choices of
 differentials, the resulting object is a bounded acyclic multicomplex, i.e.\@, an object of
 $(C^{\rm{q}}_{\rm{b}})^n\mc{N}$; conversely, given a pair of differentials $d^i,\tilde{d}^i$
 in each direction, if the $2^n$ choices all form objects of $(C^{\rm{q}}_{\rm{b}})^n\mc{N}$,
 then the whole assembly is an object of $(B^{\rm{q}}_{\rm{b}})^n\mc{N}$.
\end{rem}

Since this category of bounded acyclic binary complexes in $\mc{N}$ is itself an exact category, we
can form its Grothendieck group $K_0(B^{\rm{q}}_{\rm{b}}\mc{N})$. The main theorem of
\cite{Gray1} is a surprising connection between this group and the $n^{\rm{th}}$ higher $K$-group
of $\mc{N}$. We call an $n$-dimensional binary multicomplex \emph{diagonal} if the pair of
differentials in some direction are equal, i.e, if $d^i = \tilde{d}^i$ for some
$1 \le i \le n$. Grayson's theorem, which we shall hereafter use as our definition of the $K$-groups,
says that $K_n(\mc{N})$ is isomorphic to the quotient of the Grothendieck group of
$B^{\rm{q}}_{\rm{b}}\mc{N}$ by the subgroup generated by the classes of the diagonal bounded
acyclic binary multicomplexes. More formally:

\begin{thmdef}[\cite{Gray1}, Corollary 7.4]
\label{Grayson's def}
 For $\mc{N}$ an exact category and $n \ge 0$, the abelian group $K_n(\mc{N})$ is presented as
 follows. There is one generator for each bounded acyclic binary multicomplex of dimension $n$, and
 there are two families of relations:
 \begin{enumerate}
 \item $[N'] + [N''] = [N]$ if there is a short exact sequence
 \[
  0 \rightarrow N' \rightarrow N \rightarrow N'' \rightarrow 0
 \]
 in $(B^{\rm{q}}_{\rm{b}})^n\mc{N}$, and
 \item $[D] = 0$ if $D$ is a diagonal bounded acyclic binary multicomplex.
 \end{enumerate}
\end{thmdef}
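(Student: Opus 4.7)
The plan is to derive this presentation of $K_n(\mc{N})$ by iterating a single delooping identity: for any exact category $\mc{A}$ and any $m \ge 0$,
\[
 K_{m+1}(\mc{A}) \;\cong\; K_m\bigl(B_{\rm b}^{\rm q} \mc{A}\bigr) \big/ \bigl\langle [N] : N \in B_{\rm b}^{\rm q}\mc{A} \text{ diagonal} \bigr\rangle. \qquad (\star)
\]
The case $m = 0$ of $(\star)$ is Nenashev's presentation of $K_1$, which can be proved via a direct comparison with Quillen's $K_1$ or via the Gillet--Grayson model built from ``double short exact sequences''. For general $m$, Grayson constructs a combinatorial simplicial model from bounded acyclic binary complexes whose $m$-th homotopy group realises $K_{m+1}(\mc{A})$; the argument is of fibration type, in the spirit of Waldhausen's additivity theorem.

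Granting $(\star)$, the theorem follows by induction on $n$. The base case $n=0$ is the definition of $K_0$, since a $0$-dimensional binary multicomplex is just an object of $\mc{N}$ and the diagonal relation is vacuous. For the inductive step, one applies $(\star)$ with $\mc{A} = \mc{N}$ to obtain
\[
 K_n(\mc{N}) \cong K_{n-1}\bigl(B_{\rm b}^{\rm q} \mc{N}\bigr) \big/ \langle\text{diagonals in one distinguished direction}\rangle,
\]
and then applies the inductive hypothesis to the exact category $B_{\rm b}^{\rm q}\mc{N}$, noting the identification $(B_{\rm b}^{\rm q})^{n-1}(B_{\rm b}^{\rm q}\mc{N}) = (B_{\rm b}^{\rm q})^n\mc{N}$. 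Thanks to the commutativity of differentials in distinct directions recorded in Remark \ref{commute}, the diagonal relations in different directions do not interact, so the successive quotients combine into the single relation ``$[D] = 0$ if $D$ is diagonal in some direction''.

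Similarly the short-exact-sequence relations contributed at each stage assemble into the single additivity relation stated in the theorem. The main obstacle is the proof of $(\star)$ for general $m$: this is the technical heart of \cite{Gray1} and rests on a careful homotopy-theoretic analysis of the binary-complex model, combined with a comparison to some standard model of higher $K$-theory. Once $(\star)$ is in place, the induction above is essentially formal bookkeeping enabled by Remark \ref{commute}.
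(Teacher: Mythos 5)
Your proposal is aimed at the wrong target. The paper does not prove this Theorem/Definition; it cites it verbatim from Grayson (\cite{Gray1}, Corollary 7.4) and explicitly treats it as the definition of $K_n$ for the rest of the paper. What the paper actually proves in this vicinity is Proposition 1.4: that the variant stated here, where binary multicomplexes are required to be \emph{first-quadrant} (supported in $\bZ_{\ge 0}^n$), yields the same $K$-groups as Grayson's original formulation, which allows complexes supported anywhere on $\bZ^n$. That bridging argument is the genuine content: it goes via an inverse-limit comparison $K_0(B^{\rm q}_{\ge -i}\mc{N})/T_i$ and the shifting Lemma 1.6, which shows $[N[1]] = -[N]$ by filtering the cone of the identity by diagonal two-term pieces. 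Your proposal never mentions the first-quadrant restriction or the shift argument, so it does not engage with the part of the statement the authors needed to justify.

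Beyond that, the proof sketch you give is not self-contained. Your identity $(\star)$ is acknowledged by you to be ``the technical heart of \cite{Gray1}'' and you offer only a description of how it might be proved (Nenashev/Gillet--Grayson for $m=0$, a ``fibration type'' argument in general), so the proposal is circular: it reduces Grayson's theorem to Grayson's theorem. There is also a well-posedness issue in $(\star)$ as written: for $N$ an object of $B^{\rm q}_{\rm b}\mc{A}$, the class $[N]$ lives in $K_0(B^{\rm q}_{\rm b}\mc{A})$, not in $K_m(B^{\rm q}_{\rm b}\mc{A})$ for $m>0$, so the quotient in $(\star)$ is not literally defined; to make the induction run you would have to phrase the relations at the level of $K_0((B^{\rm q}_{\rm b})^n\mc{N})$ from the start rather than layer by layer. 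Finally, the claim that the diagonal relations in different directions ``do not interact'' by Remark \ref{commute} is an assertion that needs an argument, not a consequence of the commutativity of the differentials per se. In short: the proposal re-derives neither Grayson's result (it assumes it) nor the paper's actual contribution (the first-quadrant reduction), and it has an internal gap in the formulation of $(\star)$.
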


 We remark that our statement of Theorem / Definition~\ref{Grayson's def} is subtly different than the one  originally given by Grayson. Our bounded acyclic binary multicomplexes are first-quadrant
 multicomplexes, those that are supported in $\bZ_{\ge 0}^n$, whereas Grayson's do not have to
 satisfy this condition. The absolute lower bound for complexes is a technical constraint that we
 need in order to use the Dold--Kan correspondence. Our additional condition is harmless as the following proposition shows. For this, let $K_n^\mathrm{Gr}(\mathcal{N})$ temporarily denote the $n^\mathrm{th}$ $K$-group of $\mathcal{N}$ as defined in \cite{Gray1}.

\begin{prop}
For every exact category $\mathcal{N}$ and every $n \ge 0$, the canonical homomorphism $K_n(\mathcal{N}) \rightarrow K_n^\mathrm{Gr}(\mathcal{N})$ is bijective.
\end{prop}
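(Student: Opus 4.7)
The canonical homomorphism is induced by the inclusion of first-quadrant bounded acyclic binary multicomplexes into Grayson's larger category of all bounded acyclic binary multicomplexes in $\mc{N}$; short exact sequences and diagonal multicomplexes are manifestly preserved by this inclusion. My plan is to construct an explicit inverse by shifting any Grayson multicomplex into the first quadrant.

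The crucial ingredient is a \emph{shift lemma}, presumably the one alluded to at the start of this section. For a one-dimensional bounded acyclic binary complex $N = (N_\cd, d, \tilde d)$, the binary analogue of the mapping cone of $\id_N$ is a bounded acyclic binary complex $C$ fitting into a short exact sequence
\[ 0 \to N \to C \to N[1] \to 0 \]
in $B_{\mathrm{b}}^{\mathrm{q}}\mc{N}$, so $[C] = [N] + [N[1]]$ in $K_n^{\mathrm{Gr}}(\mc{N})$. A second application of the cone construction (alternatively: a direct filtration of $C$ by diagonal binary subcomplexes) should yield that the double shift $N \mapsto N[2]$ preserves the class. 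Iterating the argument in each of the $n$ coordinate directions one at a time, I would deduce that any even-vector shift $M \mapsto M[2\vec{k}]$ preserves the class both in $K_n^{\mathrm{Gr}}(\mc{N})$ and (when the shift keeps us in the first quadrant) in $K_n(\mc{N})$.

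Armed with the shift lemma, I would define an inverse $\phi \colon K_n^{\mathrm{Gr}}(\mc{N}) \to K_n(\mc{N})$ on generators by $\phi([M]) \defeq [M[2\vec{k}_M]]$, where $\vec{k}_M \in \bZ_{\ge 0}^n$ is chosen large enough that $M[2\vec{k}_M]$ is supported in the first quadrant. Independence from the particular choice of $\vec{k}_M$ is exactly the first-quadrant shift lemma. Compatibility with Grayson's defining relations is then routine: the three terms of a short exact sequence $0 \to M' \to M \to M'' \to 0$ all live in the support box of the middle term, so a single even shift brings the whole sequence simultaneously into the first quadrant and produces a short exact sequence there, while a diagonal multicomplex is sent to a diagonal multicomplex under any shift.

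The composition $K_n(\mc{N}) \to K_n^{\mathrm{Gr}}(\mc{N}) \to K_n(\mc{N})$ is the identity by taking $\vec{k}_M = 0$ whenever $M$ is already first-quadrant, and the composition $K_n^{\mathrm{Gr}}(\mc{N}) \to K_n(\mc{N}) \to K_n^{\mathrm{Gr}}(\mc{N})$ sends $[M]$ to $[M[2\vec{k}_M]]$, which equals $[M]$ by the shift lemma applied inside $K_n^{\mathrm{Gr}}(\mc{N})$. The main obstacle is therefore the proof of the shift lemma itself; in particular, analysing the class $[C]$ of the binary cone of the identity modulo diagonals is the one nontrivial step, and everything else is bookkeeping.
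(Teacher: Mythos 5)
Your proposal is correct, and it isolates exactly the key ingredient the paper uses: the shift lemma, proved by analysing the cone of the identity and filtering it by diagonal subcomplexes via truncation. Where you diverge from the paper is in the deduction. The paper (for $n=1$; it states that $n>1$ is no harder) realises $K_1^\mathrm{Gr}(\mc{N})$ as the colimit of $K_0(B^\mathrm{q}_{\ge -i}\mc{N})/T_i$ over increasing supports $[-i,\infty)$, uses the shift lemma to see that each transition map has a two-sided inverse (negative shift), and concludes the colimit stabilises at the first-quadrant term $K_0(B^\mathrm{q}_{\ge 0}\mc{N})/T_0 = K_1(\mc{N})$. You instead write down an explicit inverse $\phi([M]) = [M[2\vec{k}_M]]$ and check well-definedness and the two compositions by hand. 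Both routes work; the colimit framing in the paper automatically absorbs the bookkeeping about choices of shift and compatibility with relations, which you must verify directly (as you do, noting the three terms of a short exact sequence share a support box and that diagonals shift to diagonals). One small point to make explicit in your write-up: the shift lemma is needed both in $K_n(\mc{N})$ (to see $\phi$ is independent of $\vec{k}_M$) and in $K_n^\mathrm{Gr}(\mc{N})$ (for the second composite); the paper's Lemma~\ref{shifting} is stated for first-quadrant complexes, but as the paper notes its proof generalises verbatim to the unbounded-below setting, so your appeal to it is sound.
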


\begin{proof}
For ease of presentation we shall prove this for $n=1$ only: there is no additional difficulty for $n >1$. Let $B^\mathrm{q}_\infty \mathcal{N}$ denote the category of bounded acyclic binary complexes in $\mathcal{N}$ that may be supported anywhere on~$\bZ$. For $i \ge 0$, let $B^\mathrm{q}_{\ge -i}\mathcal{N}$ denote the full subcategory of $B^\mathrm{q}_\infty \mathcal{N}$ consisting of complexes that are supported on $[-i, \infty]$. We then have $\cup_i B^\mathrm{q}_{\ge -i} \mathcal{N} = B^\mathrm{q}_\infty \mathcal{N}$ and hence $\lim_i K_0(B^\mathrm{q}_{\ge -i} \mathcal{N}) = K_0(B^\mathrm{q}_\infty \mathcal{N})$. Let $T_i$ denote the subgroup of $K_0(B^\mathrm{q}_{\ge -i} \mathcal{N})$ generated by diagonal complexes and let $T$ denote the similarly defined subgroup of $K_0(B^\mathrm{q}_\infty \mathcal{N})$. The resulting injective homomorphism $\lim_i T_i \rightarrow T$ is also surjective because all complexes are assumed to be bounded. We therefore obtain an ismorphism
\[\lim_i \left(K_0 (B^\mathrm{q}_{\ge -i} \mathcal{N})/ T_i \right) \cong \lim_i K_0 (B^\mathrm{q}_{\ge -i}\mathcal{N})/ \lim_i T_i \cong K_0(B^\mathrm{q}_\infty \mathcal{N})/T = K_1^\mathrm{Gr} (\mathcal{N}).\]
The following lemma (after generalising it from $B^\mathrm{q}_{\ge 0} \mathcal{N}$  to $B^\mathrm{q}_{\ge -i} \mathcal{N}$) shows that, for every $i \ge 0$, `shifting' induces a two-sided inverse to the negative of the canonical homomorphism $K_0(B^\mathrm{q}_{\ge -i} \mathcal{N})/T_i \rightarrow  K_0(B^\mathrm{q}_{\ge -i-1} \mathcal{N})/T_{i+1}$. Hence the canonical map
\[K_1 (\mathcal{N}) = K_0(B^\mathrm{q}_{\ge 0} \mathcal{N})/T_0 \rightarrow \lim_i \left(K_0(B^\mathrm{q}_{\ge -i} \mathcal{N})/T_i \right) \cong K_1^\mathrm{Gr}(\mathcal{N})\]
is an isomorphism, as was to be shown.
\end{proof}

\begin{defn}
 Let $N_{\cd}$ be an acyclic binary complex with differentials $d$ and $\tilde{d}$. The
 $k^{\textnormal{th}}$ \emph{shift} of $N$, denoted $N[k]$, is the acyclic binary complex that has
 the same collection of objects as $N$, but `shifted' $k$ places, i.e.\@, $(N[k])_i = N_{i-k}$, and
 differentials given by $(-1)^kd$ and $(-1)^k\tilde{d}$.
\end{defn}

\begin{lem}
 \label{shifting}
 For any bounded acyclic binary complex $N_{\cd}$ and $k \in \mathbb{Z}_{\ge 0}$, we have
 $[N[k]] = (-1)^k[N]$ in $K_1\mc{N}$.
\end{lem}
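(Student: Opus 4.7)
The plan is to reduce to $k=1$ and then exhibit $[N]+[N[1]]$ as the class of a bounded acyclic binary complex that decomposes into diagonal subquotients. Since $N[k]=(N[k-1])[1]$ and shifting is an additive endofunctor of $B^{\rm q}_{\rm b}\mc{N}$, an easy induction on $k$ reduces the problem to showing $[N[1]]=-[N]$ in $K_1\mc{N}$ for an arbitrary bounded acyclic binary complex $N$.

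For this I would form the binary mapping cone $C=\operatorname{Cone}(\id_N)$ of the identity on $N$: set $C_i=N_i\oplus N_{i-1}$ with the two cone differentials
\[
d_C=\begin{pmatrix} d & \id \\ 0 & -d\end{pmatrix},\qquad \tilde d_C=\begin{pmatrix}\tilde d & \id\\ 0 & -\tilde d\end{pmatrix}.
\]
Each of $(C,d_C)$ and $(C,\tilde d_C)$ is a cone of an identity map, hence contractible and a fortiori acyclic, so $C\in B^{\rm q}_{\rm b}\mc{N}$. The degreewise split sequences $0\to N_i\to N_i\oplus N_{i-1}\to N_{i-1}\to 0$ assemble into a short exact sequence
\[
0\to N\to C\to N[1]\to 0
\]
in $B^{\rm q}_{\rm b}\mc{N}$, yielding $[C]=[N]+[N[1]]$ in $K_0(B^{\rm q}_{\rm b}\mc{N})$ and hence in $K_1\mc{N}$.

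It therefore remains to show $[C]=0$ in $K_1\mc{N}$. I would establish, by induction on the length of the support, the slightly more general claim that $[\operatorname{Cone}(\id_M)]=0$ in $K_1\mc{N}$ for any bounded binary complex $M$ in $\mc{N}$, with no acyclicity assumption on $M$ itself. If $M$ is concentrated in a single degree $a$, then $\operatorname{Cone}(\id_M)$ is the diagonal binary complex $M_a\xrightarrow{\id}M_a$ (in degrees $a+1$ and $a$), whose class vanishes by definition. Otherwise, with $M$ supported on $[a,b]$ and $a<b$, I define a sub-binary complex $B\subseteq\operatorname{Cone}(\id_M)$ by $B_a=M_a$, $B_{a+1}=0\oplus M_a\subseteq M_{a+1}\oplus M_a$, and $B_i=0$ otherwise. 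A direct check exploiting the vanishing of $M$ below degree $a$ shows that $B$ is closed under both cone differentials, that each restricts to $\id_{M_a}$ on $B$ (so $B$ is diagonal), and that the quotient $\operatorname{Cone}(\id_M)/B$ is canonically identified with $\operatorname{Cone}(\id_{M'})$, where $M'$ agrees with $M$ above degree $a$ and vanishes in degree $a$. Since $M'$ has strictly smaller length of support, the inductive hypothesis gives $[\operatorname{Cone}(\id_{M'})]=0$, and hence $[\operatorname{Cone}(\id_M)]=[B]+[\operatorname{Cone}(\id_{M'})]=0$.

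The main technical point is the verification of the sub-binary complex $B$: the two cone differentials involve the distinct operators $d$ and $\tilde d$, so one needs that $B$ is simultaneously closed under both, that both restrictions coincide, and that the quotient retains the cone-of-identity form required by the induction. The decisive feature is that $B$ lives at the very bottom of the support of $M$, so the ``outgoing'' components of both differentials fall into a zero degree; this is what makes the peeling argument close cleanly.
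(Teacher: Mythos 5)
Your proof is correct and takes essentially the same approach as the paper: you form the cone of the identity, use the short exact sequence $0\to N\to\operatorname{Cone}(\id_N)\to N[1]\to 0$, and then show the cone vanishes by peeling off diagonal two-term pieces, using that cones of identities stay acyclic even after truncation. The only (cosmetic) difference is that you peel from the bottom of the support with the diagonal piece appearing as the subobject, whereas the paper peels from the top with the diagonal piece appearing as the quotient.
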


\begin{proof}
 It is enough to show that $ [N[1]] = -[N]$. There is a short exact sequence
 \[
  0 \rightarrow N_{\cd} \rightarrow \operatorname{cone}(N_{\cd}) \rightarrow N_{\cd}[1] \rightarrow 0,
 \]
 where $\operatorname{cone}(N_{\cd})$ denotes the mapping cone of the identity map
 $N_{\cd} \stackrel{1}{\rightarrow} N_{\cd}$ ($\operatorname{cone}(N_{\cd})$ is a binary complex
 in the obvious way). So it suffices to show that $\operatorname{cone}(N_{\cd})$ vanishes in
 $K_1\mc{N}$. Let $N_n$ be the left-most non-zero object of $N_{\cd}$, and let
 $\operatorname{trun}(N_{\cd})$ be the (not necessarily acyclic) binary complex formed by
 truncating $N_{\cd}$ to forget $N_n$: that is, $\operatorname{trun}(N_{\cd})$ has a $0$ in place
 of $N_n$. Then there is a short exact sequence
 \[
  0 \rightarrow \operatorname{cone}(\operatorname{trun}(N_{\cd})) \rightarrow
  \operatorname{cone}(N_{\cd}) \rightarrow \Delta(N_n \stackrel{1}{\rightarrow} N_n)
  \rightarrow 0,
 \]
 where $\Delta(N_n \stackrel{1}{\rightarrow} N_n)$ is the diagonal binary complex
 \[
  \xymatrix{
  0 \binr{}{}
  & N_n \binr{1}{1}
  & N_n \binr{}{}
  & 0,
  }
 \]
 which is supported in degrees $n+1$ and $n$. Mapping cones of identities are always acyclic, so
 $\operatorname{cone}(\operatorname{trun}(N_{\cd}))$ is acyclic even when
 $\operatorname{trun}(N_{\cd})$ is not. Since $\Delta(N_n \stackrel{1}{\rightarrow} N_n)$ is
 diagonal its class vanishes in $K_1\mc{N}$, so the above short exact sequence yields the
 relation $[\operatorname{cone}(N_{\cd})] = [\operatorname{cone}(\operatorname{trun}(N_{\cd}))]$.
 We iterate this procedure by repeatedly truncating $\operatorname{trun}(N_{\cd})$ to show that
 $[\operatorname{cone}(N_{\cd})]$ is zero.
\end{proof}

The same proof gives the analogous result for binary multicomplexes:
for $N$ in $(B^{\rm{q}}_{\rm{b}})^n \mc{N}$
the class of $N$ shifted one place in any of the $n$ possible directions
in $K_n (\mc{N})$ is $-[N]$. From this the actions of more general shifts
(in multiple directions) follow immediately.

\section{Preliminaries from homological algebra}
\label{prelim}

In this section we recall some preliminaries from the homological algebra of exact categories.
We say what it means for an exact category to be idempotent complete or split, and show that
the notions of acyclicity and contractibility of complexes coincide in exact categories
that have both of these properties.
We then review simplicial objects and the Dold--Kan correspondence.
Finally we discuss functors of finite degree, a weakening of the concept of additive functors.
These three topics may seem rather disjoint here, but we bring them together in the next section
to produce functors between categories of chain complexes that preserve boundedness and acyclicity,
paving the way for a functor on binary multicomplexes that induces a map on $K$-theory.

\begin{defn}
 An exact category $\mc{N}$ is \emph{idempotent complete}
 if every idempotent endomorphism in $\mc{N}$ the category has a kernel.
\end{defn}

This does not hold for example for the category of free modules over a ring when there exists a non-free projective module. All of the exact categories we use in this paper are idempotent complete. This is an assumption
on the `base level' exact categories we introduce, but will need to be proven for categories of
multicomplexes (Lemma~\ref{iterate lemma}). Idempotent complete exact categories come with an
embedding into an abelian category $\mc{N} \hookrightarrow \mc{A}$ that
\emph{supports long exact sequences}: a chain complex is acyclic in $\mc{N}$ if and only if it is
exact when considered as a chain complex of $\mc{A}$ (see \S1 of \cite{Gray1}).
Homological algebra is therefore quite straightforward in idempotent complete exact categories.\\
Contractible complexes in idempotent complete exact categories are always acyclic; indeed
this is an equivalent characterisation of idempotent completeness (\cite{Buh}, 10.9).
Acyclic complexes in exact categories (even idempotent complete ones)
are not usually contractible. There is a useful criterion for contractibility, however.
Recall that a chain complex $(C.,d)$ is called \emph{split}
if there exist maps $s_n\colon C_{n-1} \rightarrow C_n$ such that
$d_ns_nd_n = d_n$.

\begin{lem}
\label{acyclic and split}
 A chain complex in an idempotent complete exact category is contractible if and only if it is
 acyclic and split.
\end{lem}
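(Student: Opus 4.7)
The plan is to establish the two directions separately. The forward direction is essentially formal, while the reverse requires assembling an explicit contracting homotopy from the splitting data.

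For the forward direction, contractibility implies acyclicity in any idempotent complete exact category (this is noted just above the statement and is \cite{Buh}, 10.9). Moreover, if $h$ is a chain contraction with $dh + hd = \operatorname{id}$, then $d = d(dh+hd) = dhd + d^2h = dhd$, so $h$ itself plays the role of a family $\{s_n\}$ witnessing that the complex is split.

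For the reverse direction, suppose the complex is acyclic and split. Acyclicity gives factorisations $d_i = \iota_i \pi_i$ fitting into short exact sequences
\[
    0 \to Z_{i+1} \xrightarrow{\iota_{i+1}} C_i \xrightarrow{\pi_i} Z_i \to 0
\]
in $\mathcal{N}$. The splitting identity $d_i s_i d_i = d_i$ reads $\iota_i \pi_i s_i \iota_i \pi_i = \iota_i \pi_i$; cancelling the monomorphism $\iota_i$ on the left and the epimorphism $\pi_i$ on the right yields $\pi_i(s_i\iota_i) = \operatorname{id}_{Z_i}$. Hence $f_i := s_i \iota_i \colon Z_i \to C_i$ splits the above short exact sequence, producing a biproduct decomposition $C_i \cong Z_{i+1} \oplus Z_i$ in $\mathcal{N}$. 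Under this identification, $d_i$ acts as $(z', z) \mapsto (z, 0) \in Z_i \oplus Z_{i-1}$.

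Define $h_i \colon C_i \to C_{i+1}$ in these coordinates by $(z', z) \mapsto (0, z') \in Z_{i+2} \oplus Z_{i+1}$. A direct computation then gives $d_{i+1} h_i + h_{i-1} d_i = \operatorname{id}_{C_i}$, so $h$ is the desired contracting homotopy. The only step demanding care is checking that the decomposition $C_i \cong Z_{i+1} \oplus Z_i$ and the maps $h_i$ exist at the level of the exact category $\mathcal{N}$ rather than merely in the ambient abelian envelope; this is immediate because split short exact sequences in $\mathcal{N}$ give honest biproducts in $\mathcal{N}$, and morphisms between biproducts are specified by their matrix entries.
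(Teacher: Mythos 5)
Your proof is correct, and both directions match the strategy the paper has in mind; but you have correctly handled a subtlety that the paper's two-sentence sketch glosses over. The paper asserts that ``the collection of splitting maps $\{s_n\}$ describes a homotopy from its identity map to its zero map,'' but an arbitrary collection satisfying only $d_n s_n d_n = d_n$ need not itself be a contraction: for instance, take $C_\bullet = (A \xrightarrow{\binom{1}{0}} A\oplus A \xrightarrow{(0\ 1)} A)$ with $s_2 = (1\ 1)$ and $s_1 = \binom{1}{1}$; then $d_2 s_2 + s_1 d_1 = \left(\begin{smallmatrix}1&2\\0&1\end{smallmatrix}\right)\neq \id$ on the middle term. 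What you do instead — split the short exact sequences $0\to Z_{i+1}\to C_i\to Z_i\to 0$ using the sections $s_i\iota_i$, identify $C_i\cong Z_{i+1}\oplus Z_i$ as a biproduct in $\mc{N}$, and then write down the shift map $h$ in those coordinates — is the correct elementary argument that the paper is alluding to. Your forward direction ($d = dhd$ from $dh + hd = \id$) and the remark that the biproduct decomposition lives in $\mc{N}$ because split short exact sequences give honest biproducts are both accurate.
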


\begin{proof}
It follows the definition of a chain homotopy that
contractible complexes in idempotent complete exact categories are also split.
Conversely, an elementary argument shows that if a complex in an exact category is acyclic
and split, then the collection of splitting maps $\{s_n\}$ describes a homotopy from its identity
map to its zero map.
\end{proof}

If an acyclic complex is split, each of the constituent short exact sequences that it factors
into is split: that is, isomorphic to a canonical direct sum sequence (the converse is obviously true as well).
Recall that an exact category is called \emph{split exact} if all of its declared short exact sequences are split.
In such an exact category, all acyclic complexes are split.
Hence the notions of contractibility and acyclicity coincide for complexes in a split exact
category that is also idempotent complete. An example of such an exact category is the category
$\proj (R)$ of (finitely generated) projective modules over a ring $R$. That acyclic complexes are
contractible in this category is key to the results of this paper.

\medskip

We now turn to the Dold--Kan correspondence. To give its statement we need the language of
simplicial objects. Recall that $\Delta$ denotes the \emph{simplex category}: the category whose
objects are the finite non-empty ordered sets $[n] = \{ 0 < 1 < \cdots < n \}$ and whose morphisms are
the order-preserving maps. A \emph{simplicial object} in a category $\mc{C}$ is a contravariant functor
from $\Delta$ to $\mc{C}$, and the natural transformations between such functors make
$\mc{C}^{\Delta^{\rm{op}}}$ into a category. Equivalently, a simplicial object $C$ in $\mc{C}$ can
be specified to be a collection of objects $C_n$, $n \in \bN$, of $\mc{C}$ together with face
maps $\delta_i \colon C_n \rightarrow C_{n-1}$ and degeneracy maps
$\sigma_j \colon C_n \rightarrow C_{n+1}$, $i,j=0,\dots,n$, satisfying various combinatorial
identities. A morphism between simplicial objects $C$ and $D$ is a collection of morphisms
$C_n \rightarrow D_n$ that commutes with the faces and degeneracies. A homotopy
$h \colon f \simeq g$ between simplicial maps $f,g \colon C \rightarrow D$ is a simplicial morphism $h \colon C \times \Delta^1 \rightarrow D$ (where $\Delta^1$ denotes the simplicial set corresponding to the ordered set $\{0<1\}$, as usual) such that $h|_{C \times \{0\}}= f$ and $h|_{D\times \{1\}} = g$; it can also be described as collection of morphisms $h_i \colon C_n \rightarrow D_{n+1}$, $i=0,\dots,n$, which satisfy further combinatorial identities determined by compositions relating $f$, $g$, the $h_i$, and the faces and degeneracies of $C$ and $D$. See, for example, Chapter 8 of \cite{WeiHom} for full definitions of simplicial objects and homotopies.

\medskip

If $F \colon \mc{C} \rightarrow \mc{D}$ is a covariant functor, then post-composition with $F$
induces a functor between categories of simplicial objects
$\mc{C}^{\Delta^{\rm{op}}} \rightarrow \mc{D}^{\Delta^{\rm{op}}}$. Abusing notation, we shall also
call this functor $F$. Importantly, if $h \colon f \simeq g$ is a simplicial homotopy between
$f,g \colon C \rightarrow D$, then $F(h) \colon F(f) \simeq F(g)$ is a simplicial homotopy between
$F(f), F(g) \colon F(C) \rightarrow F(D)$. The analogous statement for chain homotopies between
chain maps is not true if $F$ is not additive. The Dold--Kan correspondence shows that chain
complexes and simplicial objects are equivalent in a non-obvious way, and allows us to induce
homotopy-preserving functors between categories of chain complexes, even when the original functors
are not additive.

\begin{defn}
\label{simp}
 Let $\mc{P}$ be an additive category.
 Given a chain complex $C. \in C\mc{P}$, we define a simplicial object
 $\Gamma(C.) \in \mc{P}^{\Delta^{\rm{op}}}$ as follows.
 \begin{enumerate}
  \item {\bf Objects:} Given $p \le n$, let $\eta$ range over all surjections
  $[n] \twoheadrightarrow [p]$ in $\Delta$, and let $C_p\langle\eta\rangle $ denote a copy of
  $C_p$ that is labelled by $\eta$. For each~$n$, set
   \[
      \Gamma(C)_n \defeq \bigoplus_{p \le n}\bigoplus_{\eta}C_p\langle \eta \rangle.
   \]
   \item {\bf Maps:} If $\alpha \colon [m] \rightarrow [n]$ is a morphism in $\Delta$, we describe
   $\Gamma(\alpha)$ by describing each $\Gamma (\alpha, \eta)$, the restriction of
   $\Gamma( \alpha)$ to the summand $C_p \langle \eta \rangle$ of $\Gamma(C)_n$. Let
   \[
      [m] \stackrel{\eta '}{\twoheadrightarrow} [q] \stackrel{\varepsilon}{\hookrightarrow} [p]
   \]
   be the unique epi-monic factorisation of $\eta \alpha$. Then
   \begin{equation*}
    \Gamma (\alpha, \eta) \defeq
    \begin{cases}
    1 \colon C_p\langle \eta \rangle \rightarrow C_p\langle \eta '\rangle & \text{if $q = p$},\\
    d_p \colon C_p \langle \eta \rangle \rightarrow C_{p-1} \langle \eta ' \rangle &
    \text{if $q = p - 1$ and $\varepsilon = \varepsilon_p$},\\
    0 & \text{otherwise}.
    \end{cases}
   \end{equation*}
 \end{enumerate}
 This construction extends to a functor\footnote{Other authors
 (e.g.\@, \cite{WeiHom}) use $K$ in place of $\Gamma$; we avoid this notation for obvious reasons.}
 $\Gamma \colon C\mc{P} \rightarrow \mc{P}^{\Delta^{\rm{op}}}$.
\end{defn}

\begin{thm*}[Dold--Kan correspondence]
 If $\mc{P}$ is idempotent complete, then the functor
 $\Gamma \colon C\mc{P} \rightarrow \mc{P}^{\Delta^{\rm{op}}}$ is an equivalence of categories.
 Furthermore, $\Gamma$ is exact and preserves homotopies.
\end{thm*}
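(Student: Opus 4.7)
The plan is to reduce to the classical Dold--Kan correspondence for abelian categories and then restrict carefully. By the Gabriel--Quillen embedding cited earlier in the text, I would embed $\mc{P}$ as a full additive subcategory of an abelian category $\mc{A}$, closed under extensions; because $\mc{P}$ is idempotent complete, it is moreover closed under direct summands in $\mc{A}$. The formula in Definition~\ref{simp} extends verbatim to an analogous functor $\Gamma_{\mc{A}}\colon C\mc{A} \to \mc{A}^{\Delta^{\rm{op}}}$, and the classical Dold--Kan theorem supplies a quasi-inverse $N$, the normalized Moore complex functor. Our $\Gamma$ is then just the restriction of $\Gamma_{\mc{A}}$.

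Next, I would check that $\Gamma$ and $N$ restrict to quasi-inverse equivalences between $C\mc{P}$ and $\mc{P}^{\Delta^{\rm{op}}}$. That $\Gamma$ lands in $\mc{P}^{\Delta^{\rm{op}}}$ is immediate from its definition via finite direct sums in $\mc{P}$. The other direction---that $N$ sends $\mc{P}^{\Delta^{\rm{op}}}$ into $C\mc{P}$---is where idempotent completeness does the real work: for any $A \in \mc{A}^{\Delta^{\rm{op}}}$ the Eilenberg--Zilber / Dold--Puppe decomposition exhibits $N(A)_n$ as a direct summand of $A_n$, cut out by an idempotent $e_n \in \End_{\mc{A}}(A_n)$ assembled explicitly from faces and degeneracies of $A$. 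If $A$ already lies in $\mc{P}^{\Delta^{\rm{op}}}$, then $e_n$ is an endomorphism in $\mc{P}$, so by idempotent completeness its image $N(A)_n$ belongs to $\mc{P}$. The unit and counit of the adjunction then restrict to natural isomorphisms on the subcategories $C\mc{P}$ and $\mc{P}^{\Delta^{\rm{op}}}$.

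For exactness, a short exact sequence in $C\mc{P}$ is by definition degreewise short exact, and since $\Gamma(C)_n = \bigoplus_{p \le n}\bigoplus_{\eta} C_p\langle \eta \rangle$ is a pure direct-sum construction, $\Gamma$ preserves these sequences degreewise, which is exactly the notion of short exact in $\mc{P}^{\Delta^{\rm{op}}}$. For homotopy preservation, a chain homotopy $h\colon f \simeq g$ between $f,g\colon C \to D$ in $C\mc{P}$ is a system of maps $h_n\colon C_n \to D_{n+1}$ satisfying $dh + hd = g - f$. Via the summand formula defining $\Gamma$, these assemble into a degeneracy-compatible system $\Gamma(h)_i\colon \Gamma(C)_n \to \Gamma(D)_{n+1}$; the simplicial-homotopy identities are then verified by a direct unwinding of $\Gamma(\alpha,\eta)$ on each summand, using the epi-monic factorisations in $\Delta$.

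The main obstacle is the Dold--Puppe decomposition step: one must produce the idempotents $e_n$ explicitly from faces and degeneracies and verify that they are compatible with the counit $\Gamma N \cong \id$ and the unit $N \Gamma \cong \id$ without ever forming kernels or cokernels that need not exist in $\mc{P}$. Once this purely additive construction of the idempotents is in place, idempotent completeness of $\mc{P}$ takes care of the rest, and the exactness and homotopy assertions follow cleanly from the explicit direct-sum form of $\Gamma$.
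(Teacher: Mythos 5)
Your proposal is correct and takes essentially the same route as the paper: the paper proves this theorem by citation (Chapter~8 of Weibel for abelian $\mc{P}$, and \S 1.2.3 of Lurie's \emph{Higher Algebra} for the general idempotent-complete case), and the prose immediately following the theorem statement already sketches the restriction argument you give --- that $\Gamma$ lands in $\mc{P}^{\Delta^{\rm{op}}}$ because it is a finite-direct-sum construction, and that $N$ lands in $C\mc{P}$ because $N(A)$ is a direct summand of the associated chain complex $C(A)$ and $\mc{P}$, being idempotent complete, is closed under direct summands in the ambient abelian category.
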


\begin{proof}
 Chapter 8 of \cite{WeiHom} proves this when $\mc{P}$ is an abelian category.
 The general case is \S1.2.3 of \cite{HigherAlg}.
\end{proof}

The inverse functor to $\Gamma$ is most simply described for an abelian category.
\begin{defn}
\label{moore complex}
 Let $A$ be a simplicial object in an abelian category $\mc{A}$.
 \begin{enumerate}
  \item The \emph{associated chain complex} $C(A)$ has objects $C(A)_n = A_n$ and differential
  \[ d_n = \sum_{i=0}^n(-1)^i\delta_i \colon C(A)_n \rightarrow C(A)_{n-1}. \]
  \item The subcomplex
 \[
  D(A)_n = \sum_{i=0}^n \operatorname{Im} (\sigma_i \colon A_{n-1} \rightarrow A_n)
 \]
 is called the \emph{degenerate subcomplex} of $C(A)$.
  \item The \emph{normalized Moore complex} $N(A)$ has objects
  \[ N_n(A) = A_n / D(A)_n \]
 with the induced differential $\bar{d}_n$.
 \end{enumerate}
 The associated chain complex splits globally as $C(A) = N(A) \oplus D(A)$.
\end{defn}
The normalized Moore complex defines a functor
$N\colon \mc{A}^{\Delta^{\rm{op}}} \rightarrow C\mc{A}$.
It is exact and preserves homotopies, and is inverse to $\Gamma$ (up to natural isomorphism).
Now if $\mc{P}$ is an idempotent complete exact category, then there is an embedding
$\mc{P} \subseteq \mc{A}$ into an abelian category such that $\mc{P}$ is closed under taking direct
summands in $\mc{A}$. If $P$ is an object of
$\mc{P}^{\Delta^{\rm{op}}} \subseteq \mc{A}^{\Delta^{\rm{op}}}$, then the associated chain complex
$C(P)$ is a chain complex $\mc{A}$ with objects in $\mc{P}$. But $N(P)$ is a direct summand of
$C(P)$, which has objects in $\mc{P}$, so $N(P)$ has objects in $\mc{P}$. Therefore $N$ restricts
to a functor $\mc{P}^{\Delta^{\rm{op}}} \rightarrow C\mc{P}$. Furthermore the functor $N$ is
exact and preserves homotopies. See \cite{HigherAlg} for further details.\\
We conclude our preliminaries by discussing functors of finite degree.

\begin{defn}
 Let $F\colon \mc{C} \rightarrow \mc{D}$ be any functor between additive categories that satisfies
 $F(0) = 0$. Then there is a functorial decomposition
 \[
    F(X \oplus Y) = F(X) \oplus \operatorname{cr}_2(F)(X,Y) \oplus F(Y),
 \]
 where $ \operatorname{cr}_2(F) \colon \mc{C} \times \mc{C} \rightarrow \mc{D}$ is the second
 \emph{cross-effect functor} (see \cite{EM_cross_effects}), which is defined to be the kernel of
 the natural projection $F(X \oplus Y) \rightarrow F(X) \oplus F(Y)$. The functor $F$ is said to
 have \emph{degree} $\le 1$ if it is additive (i.e.\@, if $\operatorname{cr}_2(F)$ vanishes), and
 we say that $F$ has degree $\le d$ if $\operatorname{cr}_2(F)(X,Y)$ is of degree $\le d-1$ in
 each argument. If $F$ is of degree $\le d$, then $F$ is of degree $\le d'$ for all $d' \ge d$.
 We say that $F$ \emph{has degree} $d$ if it has degree $\le d$ but does not have degree $\le d-1$.
\end{defn}

\begin{ex}
 For $R$ a non-zero commutative ring,
 the exterior power $\Lambda^r\colon \proj(R) \rightarrow \proj(R)$ has degree $r$ for each $r > 0$.
 This follows from the canonical decomposition
 \[
 \Lambda^r(X \oplus Y) \cong \Lambda^r(X) \oplus
 \left( \bigoplus_{i=1}^{r-1} \Lambda^{r-i}(X) \otimes \Lambda^i(Y) \right)\oplus \Lambda^r(Y).
 \]
\end{ex}

If $F\colon \mc{P} \rightarrow \mc{Q}$ is an \textit{additive} functor between exact categories,
and if $P_{\cdot}$ is a bounded complex, then $NF\Gamma(P_{\cdot})$ is certainly bounded again.
This also holds true for functors of finite degree, as the following lemma shows.

\begin{lem}[\cite{SatK}, Corollary 4.6]
\label{length}
 Let $P_{\cd}$ be a chain complex in $C\mc{P}$ of length $\ell$, and let
 $F\colon \mc{P} \rightarrow \mc{Q}$ be a functor of degree $d$ between exact categories. Then
 $NF\Gamma(P_{\cd})$ has length less than or equal to $d \ell$. \qed
\end{lem}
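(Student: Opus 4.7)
The plan is to combine the explicit cross-effect decomposition of $F$ with the combinatorics of $\Gamma$, and then to pinpoint which summands of $F\Gamma(P_{\cd})_n$ survive the passage to the normalized Moore complex. Writing
\[
\Gamma(P_{\cd})_n \;=\; \bigoplus_{\eta\colon [n]\twoheadrightarrow [p(\eta)]} P_{p(\eta)}\langle\eta\rangle,
\]
the iterated cross-effect decomposition gives
\[
F\Gamma(P_{\cd})_n \;\cong\; \bigoplus_{\emptyset\,\neq\, S} \operatorname{cr}_{|S|}(F)\bigl(P_{p(\eta)} \colon \eta \in S\bigr),
\]
the direct sum running over finite non-empty subsets of the indexing set of surjections. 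Since $F$ has degree $d$, only subsets of size $|S|\le d$ can contribute.

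Next I would determine when such a summand survives normalization. A direct inspection of Definition~\ref{simp} shows that the simplicial degeneracy $\sigma_i\colon \Gamma(P_{\cd})_{n-1}\to\Gamma(P_{\cd})_n$ acts on the summand $P_p\langle\eta\rangle$ as the identity into $P_p\langle\eta\sigma_i\rangle$. Applying $F$ and using functoriality of cross-effects, the summand indexed by $S=\{\eta_1,\ldots,\eta_j\}$ lies in the image of $\sigma_i$ precisely when every $\eta_k$ factors through the single degeneracy $\sigma_i\colon [n]\twoheadrightarrow [n-1]$, equivalently when there exists $i\in\{0,\dots,n-1\}$ such that $\eta_k(i)=\eta_k(i+1)$ for every $k$. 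Call such $S$ \emph{jointly degenerate}; only jointly non-degenerate subsets can contribute to $NF\Gamma(P_{\cd})_n$.

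The proof concludes by pigeonhole. An order-preserving surjection $\eta\colon [n]\twoheadrightarrow [p]$ has exactly $p$ ``jumps'', namely the indices $i\in\{0,\dots,n-1\}$ with $\eta(i)\ne\eta(i+1)$. Joint non-degeneracy of $S$ forces every such $i$ to be a jump of at least one $\eta_k$, and therefore
\[
n \;\le\; \sum_{k=1}^{j} p(\eta_k)\;\le\; j \cdot \ell \;\le\; d\,\ell,
\]
using that $p(\eta_k)\le \ell$ by the length assumption on $P_{\cd}$ and that $j\le d$. Hence $NF\Gamma(P_{\cd})_n = 0$ whenever $n > d\ell$, as required. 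The main obstacle is the middle step: the cross-effect decomposition is defined via idempotents built from biproduct inclusions and projections, and one must verify carefully that these idempotents are compatible with the simplicial operators in such a way that a jointly degenerate subset $S$ really contributes an entire direct summand to the image of some $\sigma_i$, rather than only a proper subobject. Once this bookkeeping is clean, the remaining combinatorics is the elementary estimate above.
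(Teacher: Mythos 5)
Your argument is correct, and it is essentially the proof one finds in the reference the paper cites: the Lemma is stated with a \verb+\qed+ and no argument, deferring to \cite{SatK}, Corollary~4.6, where the explicit cross-effect description of $NF\Gamma(P_\cd)$ is worked out and the length bound is exactly this nondegeneracy/pigeonhole count. The ``bookkeeping'' worry you flag at the end does resolve, and it is worth recording why: each degeneracy $\sigma_i\colon \Gamma(P_\cd)_{n-1}\to\Gamma(P_\cd)_n$ sends the summand $P_p\langle\eta\rangle$ identically onto $P_p\langle\eta\eta_i\rangle$ with $\eta\mapsto\eta\eta_i$ injective, so $\sigma_i$ factors as a relabelling isomorphism of biproducts followed by inclusion of a sub-biproduct; the cross-effect decomposition of $F$ applied to a finite biproduct is natural for both of these kinds of morphism, so $\operatorname{Im}(F(\sigma_i))$ is precisely the direct sum of the summands indexed by the $S$ all of whose members factor through $\eta_i$, and $\sum_i \operatorname{Im}(F(\sigma_i))$ is the direct sum over jointly degenerate $S$. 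Two minor points to make explicit: the inequality $p(\eta_k)\le\ell$ in your pigeonhole step uses that $\operatorname{cr}_k(F)$ vanishes whenever one of its arguments is zero (standard, but needed so that surjections onto $[p]$ with $p>\ell$ contribute nothing), and one should fix an ordering of each finite set $S$ so that $\operatorname{cr}_{|S|}(F)$ is being evaluated on an ordered tuple; neither affects the conclusion.
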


\section{Operations on acyclic complexes}
\label{Operations on acyclic complexes}

In this rather abstract section we describe how to use the Dold--Kan correspondence to extend a
functor $F \colon \mc{P} \rightarrow \mc{P}$ on an idempotent complete exact category to a functor
on each category of multicomplexes $F_n \colon C^n\mc{P} \rightarrow C^n\mc{P}$, $n \ge 1$.
We show that if $\mc{P}$ is split exact, then the extended functors $F_n$ send acyclic multicomplexes
to acyclic multicomplexes. We also show that if $F$ is of finite degree, then each $F_n$
preserves bounded multicomplexes and is also of finite degree.

\begin{prop}
 \label{F acyclic}
 Let $F \colon \mc{P} \rightarrow \mc{P}$
 be a covariant functor on an idempotent complete exact category, with $F(0) = 0$.
 Let $F_1 \defeq NF\Gamma \colon C\mc{P} \rightarrow C\mc{P}$ denote the induced functor.
 Then the following statements are true:

 \begin{enumerate}
  \item $F_1(0) = 0$.
  \item $F_1$ sends contractible complexes to contractible complexes.
  \item If $\mc{P}$ is split exact, then $F_1$ sends acyclic complexes to acyclic complexes.
  \item If $F$ is of degree at most $d$, then $F_1$ sends bounded complexes to bounded complexes and
  $F_1$ is again of degree at most $d$.
 \end{enumerate}
\end{prop}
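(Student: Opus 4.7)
The plan is to verify the four claims in order, drawing at each step on a different feature of the Dold--Kan machinery set up in the previous section. Part (1) is essentially tautological: $\Gamma(0)$ is the zero simplicial object, $F$ sends every zero morphism to a zero morphism (since every zero map factors through $0$ and $F(0)=0$), so $F\Gamma(0)=0$, and therefore $F_1(0)=N(0)=0$.

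For (2), I would exploit the observation already flagged in the preliminary section: while chain homotopies involve signed sums and so need not be preserved by non-additive functors, a simplicial homotopy $h\colon f\simeq g$ is encoded by morphisms $h_i$ whose defining identities involve only compositions with the face and degeneracy maps. Consequently any functor $F$ applied levelwise preserves simplicial homotopies. Since, by Dold--Kan, $\Gamma$ sends chain homotopies to simplicial homotopies and $N$ sends simplicial homotopies back to chain homotopies, the composite $F_1=NF\Gamma$ preserves chain homotopies. For a contractible $P$ with $\id_P\simeq 0$ this yields $\id_{F_1(P)}=F_1(\id_P)\simeq F_1(0)=0$, so $F_1(P)$ is contractible. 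Part (3) is then immediate: in a split exact idempotent complete category each of the constituent short exact sequences of an acyclic complex splits by hypothesis, so every acyclic complex is split and hence contractible by Lemma~\ref{acyclic and split}; by (2) its image under $F_1$ is contractible, and contractible complexes in idempotent complete exact categories are acyclic.

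For (4), the boundedness claim follows directly from Lemma~\ref{length}. For the degree claim, I would apply the functorial cross-effect decomposition levelwise to the simplicial objects $\Gamma(P)$ and $\Gamma(Q)$ and use that the exact functors $\Gamma$ and $N$ preserve direct sums to obtain the identity
\[
    \operatorname{cr}_2(F_1)(P,Q) \;=\; N\bigl(\operatorname{cr}_2(F)(\Gamma(P),\Gamma(Q))\bigr).
\]
An induction on $d$ then concludes: the base case $d=1$ (that $F_1$ is additive whenever $F$ is) is clear because $\Gamma$, $F$ and $N$ then all preserve direct sums, and the inductive step follows from the identity above together with the hypothesis that $\operatorname{cr}_2(F)(-,Y)$ has degree $\le d-1$ for every $Y$.

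The main obstacle I expect is the insight in (2) that simplicial homotopies, unlike chain homotopies, are preserved by arbitrary functors because the identities they satisfy are purely compositional; this is precisely what the Dold--Kan detour is designed to exploit, and without it parts (2) and (3) would fail for non-additive $F$. The degree induction in (4) is a secondary technical point that is routine once the correct cross-effect identity is in hand.
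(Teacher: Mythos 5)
Your proposal is correct and follows essentially the same route as the paper: part (2) rests on the same observation that simplicial homotopies (unlike chain homotopies) are preserved by arbitrary functors, part (3) is deduced from (2) via Lemma~\ref{acyclic and split}, and part (4) combines Lemma~\ref{length} with the same reduction to additivity of $N$ and $\Gamma$ followed by induction on $d$. The cross-effect identity you write out is a valid way to make explicit the induction the paper leaves implicit.
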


\begin{proof}
 Part $(1)$ is trivial.\\
 For part $(2)$, the functors $\Gamma \colon C\mc{P} \rightarrow \mc{P}^{\Delta^{\rm{op}}}$ and
 $N\colon \mc{P}^{\Delta^{\rm{op}}} \rightarrow C\mc{P}$ preserve homotopies and send $0$ to $0$,
 so they both send contractible objects to contractible objects.
 Furthermore, $F$ sends homotopies in $\mc{P}^{\Delta^{\rm{op}}}$ to homotopies in
 $\mc{P}^{\Delta^{\rm{op}}}$---if $h\colon f \sim g$ is a homotopy,
 then $F(h)\colon F(f) \sim F(g)$ is a homotopy. Since $F$ also has the property that $F(0) = 0$,
 we see that if $A \simeq 0$ in $\mc{P}^{\Delta^{\rm{op}}}$, then $F(A) \simeq F(0) = 0$.
 Therefore $NF\Gamma(P_{\cd})$ is contractible in $C\mc{P}$.\\
 Following Lemma~\ref{acyclic and split}, the acyclic complexes in a split exact idempotent
 complete exact category coincide with the contractible ones, so part (3) follows from part (2).\\
 Finally we consider part $(4)$. The first part of this statement is Lemma~\ref{length}.
 For the second part we note that, since $N$ and $\Gamma$ are additive, it is enough to show that
 $F \colon \mc{A}^{\Delta^{\rm{op}}} \rightarrow \mc{B}^{\Delta^{\rm{op}}}$ is of degree
 $\le d$. This is proven by induction on $d$.
\end{proof}

Part $(3)$ of Proposition~\ref{F acyclic} may not hold in an exact category that is not split exact,
as is shown in the following example.

\begin{ex}
 Let $F$ be the degree $2$ endofunctor $A \mapsto A^{\otimes 2}$ on the abelian category of
 abelian groups, and let $C_{\cd}$ be the short exact sequence
 \[ 0 \longrightarrow \bZ \stackrel{2}{\longrightarrow} \bZ
 \longrightarrow \bZ/2\bZ \longrightarrow 0 \]
 considered as an acyclic complex concentrated in degrees 0, 1 and 2.
 Then $NF\Gamma(C_{\cd}) = N\operatorname{diag}(\Gamma(C_{\cd}) \otimes \Gamma(C_{\cd}))$
 is homotopy equivalent to $\operatorname{Tot}(C_{\cd} \otimes C_{\cd})$
 by the Eilenberg--Zilber theorem (\cite{May}, \S 29)\footnote{
 See also Definition~\ref{simplicial tensor product}, Lemma~\ref{comparison}.}.
 But the homology group $H_2(\operatorname{Tot}(C_{\cd} \otimes C_{\cd}))$ is $\bZ/2\bZ$,
 so $NF\Gamma(C_{\cd})$ is not exact. Furthermore, the short exact sequence of functors
 \[
  0 \longrightarrow N\Lambda^2\Gamma \longrightarrow NF\Gamma
  \longrightarrow N\operatorname{Sym}^2\Gamma \longrightarrow 0
 \]
 shows that at least one of $N\Lambda^2\Gamma(C_{\cd})$ or $N\operatorname{Sym}^2\Gamma(C_{\cd})$
 is not exact either.
\end{ex}

We now iterate the Dold--Kan correspondence to describe induced functors on categories of acyclic
multicomplexes.

\begin{defn}
\label{operations on multicomplexes}
 Let $F\colon \mc{P} \rightarrow \mc{P}$ be a covariant functor on an idempotent complete
 exact category. We define functors
 \[
  F_n\colon C^n\mc{P} \rightarrow C^n\mc{P}
 \]
 for all $n \ge 0$ recursively as follows:
 \begin{enumerate}
  \item $F_0 \defeq F \colon \mc{P} \rightarrow \mc{P}$.
  \item By regarding an object of $C^{n+1}\mc{P}$ as a chain complex
  in the exact category $C^n\mc{P}$, we define
  $F_{n+1} \defeq NF_n\Gamma$.
 \end{enumerate}
\end{defn}

To show that $F_n$ sends acyclic multicomplexes to acyclic multicomplexes in a nice exact category,
we need to know that $(C^{\rm{q}})^n\mc{P}$ satisfies the same hypotheses as $\mc{P}$. This is the
content of the following technical lemma. The proof is not enlightening for the rest of the paper,
so we relegate it to appendix \ref{acyclic app}.

\begin{lem}
\label{iterate lemma}
 Let $\mc{P}$ be an exact category. For all $n > 0$ we have the following:
 \begin{enumerate}
  \item If $\mc{P}$ is idempotent complete, then $C^n\mc{P}$ and $(C^{\rm{q}})^n\mc{P}$ are also
  idempotent complete.
  \item If $\mc{P}$ is split exact, then $(C^{\rm{q}})^n\mc{P}$ is also split exact.
 \end{enumerate}
 The analogous results for the categories $C^n_{\rm{b}}\mc{P}$ and $(C^{\rm{q}}_{\rm{b}})^n\mc{P}$
 of bounded multicomplexes also hold.
\end{lem}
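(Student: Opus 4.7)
My plan is to prove everything by induction on $n$, running all of the statements (and their bounded analogues) together, because the crucial appeal to Lemma~\ref{acyclic and split} inside the proof of (2) requires idempotent completeness, so the two statements must be established in lockstep. The inductive step is essentially automatic: since $(C^{\rm{q}})^{n+1}\mc{P} = C^{\rm{q}}((C^{\rm{q}})^n\mc{P})$, and similarly for the other variants, the induction hypothesis supplies us with an idempotent complete (and, under the hypothesis of (2), split exact) exact category at level $n$, and we simply apply the $n=1$ case to it.

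For the base case of (1), let $e \colon C_{\cd} \to C_{\cd}$ be an idempotent on an object of (a variant of) $C\mc{P}$. Idempotent completeness of $\mc{P}$ splits each $e_n$ as $C_n = K_n \oplus I_n$, and since $e$ commutes with the differentials of $C_{\cd}$ these degreewise decompositions are preserved by $d$, so $K_{\cd}$ and $I_{\cd}$ are subcomplexes and $C = K \oplus I$ in $C\mc{P}$. This settles the cases of $C\mc{P}$ and $C_{\rm{b}}\mc{P}$. For $C^{\rm{q}}\mc{P}$ and $C^{\rm{q}}_{\rm{b}}\mc{P}$ it remains to see that $K$ and $I$ are acyclic; the slickest route is the Gabriel--Quillen embedding $\mc{P} \hookrightarrow \mc{A}$ into an abelian category, where idempotent completeness makes acyclic mean exact, and exactness is preserved by direct summands because homology is additive.

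For the base case of (2), take a short exact sequence $0 \to A_{\cd} \to B_{\cd} \to C_{\cd} \to 0$ in $C^{\rm{q}}\mc{P}$. Pick a degreewise section $\sigma$ of $B_{\cd} \twoheadrightarrow C_{\cd}$ (possible since $\mc{P}$ is split exact). The discrepancy $\phi = d_B\sigma - \sigma d_C$ is killed by the projection onto $C_{\cd}$, so factors through $A_{\cd}$, and a quick sign check shows it is a chain map of degree $-1$ from $C_{\cd}$ to $A_{\cd}$. Now Lemma~\ref{acyclic and split} applies (idempotent completeness is given on $\mc{P}$ and propagates to the higher levels by (1)), so $C_{\cd}$ is contractible; hence $\phi$ is null-homotopic, and correcting $\sigma$ by a null-homotopy produces an honest chain-complex section, yielding $B \cong A \oplus C$ in $C^{\rm{q}}\mc{P}$.

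The main obstacle---and the justification for the simultaneous induction---is exactly this use of Lemma~\ref{acyclic and split} inside (2): we need idempotent completeness at every level to be sure that acyclic complexes in $(C^{\rm{q}})^n\mc{P}$ are contractible, so that the null-homotopy trick can split the extension. The bounded variants cause no extra trouble, since splitting an idempotent on a bounded complex produces bounded summands, and the correction added to $\sigma$ lives only in those degrees where $C_{\cd}$ is already nonzero.
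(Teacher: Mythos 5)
Your treatment of part (1) is essentially the paper's, finishing slightly more cleanly: the paper reduces to the case of a single short exact sequence and writes out the kernel sequence explicitly, while you invoke additivity of homology in the Gabriel--Quillen embedding to see that a direct summand of an acyclic complex is acyclic. Both are fine. For part (2) you take a genuinely different route. The paper never mentions contractibility: it fixes a splitting $s''$ of the cokernel term, explicitly builds a compatible retraction $s = j_P h_P s_0 + t_P s'' q_Q$ degree by degree, and splices these along the acyclic complex. You instead take a degreewise section $\sigma$ of the admissible epimorphism, form the discrepancy $\phi = d\sigma - \sigma d$ (which lands in the kernel and anticommutes with $d$), use contractibility of the quotient complex to solve $\bar\phi = d\bar\tau - \bar\tau d$ (e.g.\ $\bar\tau = -\bar\phi h$ for a contraction $h$), and correct $\sigma$ to a chain section. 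This is a sound and arguably more conceptual argument, at the cost of sign bookkeeping that the paper's construction avoids.

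One claim in your write-up needs to be withdrawn: you assert that (2) forces a simultaneous induction with (1) because Lemma~\ref{acyclic and split} requires idempotent completeness. But the statement of (2) does not assume $\mc{P}$ idempotent complete, and the paper's proof of (2) never uses idempotent completeness either, so your proof as framed proves a weaker statement than claimed. The fix is easy and is already in the paper's proof of Lemma~\ref{acyclic and split}: the implication ``acyclic and split $\Rightarrow$ contractible'' holds in \emph{any} exact category --- idempotent completeness is only needed for the converse. In a split exact category every acyclic complex is automatically split, hence contractible, with no appeal to (1). Once you make that observation, (2) closes by induction on $n$ on its own, and the lockstep induction is unnecessary.
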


\begin{cor}
\label{operations restrict}
 Let $\mc{P}$ be a split exact idempotent complete exact category,
 and $F\colon \mc{P} \rightarrow \mc{P}$ a covariant functor such that $F(0)=0$.
 Then for $n \ge 0$ the functors of Definition~\ref{operations on multicomplexes}
 restrict to functors
 \[
  F_n\colon (C^{\rm{q}})^n\mc{P} \rightarrow
  (C^{\rm{q}})^n\mc{P}.
 \]
 Furthermore, if $F$ is of finite degree, then $F_n$ sends bounded multicomplexes to bounded
 multicomplexes. That is, each $F_n$ restricts to a functor
 \[
  F_n\colon (C^{\rm{q}}_{\rm{b}})^n\mc{P} \rightarrow
  (C^{\rm{q}}_{\rm{b}})^n\mc{P}.
 \]
\end{cor}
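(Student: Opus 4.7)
The plan is a straightforward induction on $n$ in which each inductive step amounts to applying Proposition~\ref{F acyclic} to the functor $F_n$ on a new ambient exact category, with Lemma~\ref{iterate lemma} verifying that this ambient category satisfies the hypotheses of Proposition~\ref{F acyclic}. The base case $n=0$ is tautological, since $F_0 = F$ and the hypotheses on $F$ are given.

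For the first statement, I would assume inductively that $F_n$ restricts to a functor $(C^{\rm{q}})^n\mc{P} \to (C^{\rm{q}})^n\mc{P}$ which still sends $0$ to $0$. By Lemma~\ref{iterate lemma}, $(C^{\rm{q}})^n\mc{P}$ is both idempotent complete and split exact. Applying Proposition~\ref{F acyclic}(3) to $F_n$ regarded as an endofunctor of $(C^{\rm{q}})^n\mc{P}$ then gives that $NF_n\Gamma$ sends acyclic complexes in $(C^{\rm{q}})^n\mc{P}$ to acyclic complexes in $(C^{\rm{q}})^n\mc{P}$. But $NF_n\Gamma = F_{n+1}$ by Definition~\ref{operations on multicomplexes}, and this is precisely the assertion that $F_{n+1}$ restricts to $(C^{\rm{q}})^{n+1}\mc{P} \to (C^{\rm{q}})^{n+1}\mc{P}$. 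The one small point requiring care is that the entries of $F_{n+1}(M)$ really do lie in $(C^{\rm{q}})^n\mc{P}$, not merely in $C^n\mc{P}$: the entries of $\Gamma(M)$ are finite direct sums of entries of $M$; $F_n$ applied pointwise preserves $(C^{\rm{q}})^n\mc{P}$ by the inductive hypothesis; and the entries of the Moore complex $N(A)$ are direct summands of the corresponding entries of the associated chain complex of $A$. So all stages of the composition remain in $(C^{\rm{q}})^n\mc{P}$, which is closed under finite direct sums and, by Lemma~\ref{iterate lemma}(1), under direct summands.

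For the statement about finite degree, I would strengthen the inductive hypothesis to: whenever $F$ has degree $\le d$, each $F_n$ also has degree $\le d$ and restricts to $(C^{\rm{q}}_{\rm{b}})^n\mc{P} \to (C^{\rm{q}}_{\rm{b}})^n\mc{P}$. The inductive step then invokes Proposition~\ref{F acyclic}(4) in place of (3), using the bounded version of Lemma~\ref{iterate lemma} to ensure that $(C^{\rm{q}}_{\rm{b}})^n\mc{P}$ is split exact and idempotent complete. This yields both that $F_{n+1}$ inherits degree $\le d$ and that the outermost direction of $F_{n+1}(M)$ is bounded whenever $M$ is; the same bookkeeping as in the previous paragraph then confirms that the entries of $F_{n+1}(M)$ remain bounded acyclic $n$-dimensional multicomplexes. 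There is no genuine obstacle in the argument; the only thing requiring attention is the need to maintain, at each inductive step, control both of the \emph{outermost} direction (handled by Proposition~\ref{F acyclic}) and of the \emph{entries} (handled by the inductive hypothesis together with the closure properties provided by Lemma~\ref{iterate lemma}) simultaneously.
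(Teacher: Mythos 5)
Your proof is correct and follows essentially the same route as the paper's: induct on $n$, view $(C^{\rm{q}})^{n+1}\mc{P}$ as $C^{\rm{q}}\big((C^{\rm{q}})^n\mc{P}\big)$, apply Proposition~\ref{F acyclic}(3) (resp.\ (4)) in the ambient category $(C^{\rm{q}})^n\mc{P}$ with Lemma~\ref{iterate lemma} guaranteeing idempotent completeness and split exactness, and then separately verify that the entries of $F_{n+1}(M)$ stay bounded/acyclic by tracking them through $\Gamma$, pointwise $F_n$, and $N$. The explicit bookkeeping you flag (entries of $\Gamma$ are finite direct sums; entries of $N$ are direct summands; $(C^{\rm{q}}_{\rm{b}})^n\mc{P}$ is closed under both) is precisely the content of the paper's argument for the bounded case, so your write-up matches the published proof in both structure and the points it identifies as needing care.
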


\begin{proof}
 We consider the unbounded case first.
 By Proposition~\ref{F acyclic} $(1)$, we easily see that $F_n(0) = 0$ for all $n$.
 Assume that $F_n$ restricts to a functor on the idempotent complete split exact category
 $(C^{\rm{q}})^n\mc{P}$. Regarding objects of $(C^{\rm{q}})^{n+1}\mc{P}$
 as acyclic complexes in $(C^{\rm{q}})^n\mc{P}$,
 the functor $F_{n+1} = NF_n\Gamma$ restricts to a functor on
 $(C^{\rm{q}})^{n+1}\mc{P} = C^{\rm{q}}((C^{\rm{q}})^n\mc{P})$,
 by Proposition~\ref{F acyclic} $(3)$ and Lemma~\ref{iterate lemma} (2).
 The first part of the result follows by induction.\\
 For the second part, if $F_0 = F$ is of finite degree, then the same induction over $n$ shows
 that $F_n$ is of finite degree for every $n$, by Proposition~\ref{F acyclic} $(4)$.
 In particular, for each $n \ge 1$, the functor $F_n = (F_{n-1})_1$ sends bounded complexes
 to bounded complexes, that is, it restricts to a functor
 \[
  F_n \colon C^{\rm{q}}_{\rm{b}}((C^{\rm{q}})^{n-1}\mc{P})
  \rightarrow C^{\rm{q}}_{\rm{b}}((C^{\rm{q}})^{n-1}\mc{P}).
 \]
 But we can say more: considering $P_{\cd}$ in $(C^{\rm{q}}_{\rm{b}})^n\mc{P}$
 as a chain complex, each of its objects is in $(C^{\rm{q}}_{\rm{b}})^{n-1}\mc{P}$,
 i.e.\@, they are bounded. We claim that the objects of $F_n(P_{\cd}) = NF_{n-1}\Gamma(P_{\cd})$
 are also bounded. The objects of $\Gamma(P_{\cd})$ are finite direct sums of the objects
 of $P_{\cd}$. Finite sums of bounded objects are bounded, so the objects of $\Gamma(P_{\cd})$
 are bounded. Therefore, by the inductive hypothesis, the objects of $F_{n-1}\Gamma(P_{\cd})$
 are also bounded. Finally, the objects of $NF_{n-1}\Gamma(P_{\cd})$ are direct summands of the
 objects of $F_{n-1}\Gamma(P_{\cd})$
 (From Definition~\ref{moore complex}, after embedding into an abelian category),
 so they are bounded as well. Therefore $F_n$ sends bounded chain complexes of bounded objects in
 $(C^{\rm{q}}_{\rm{b}})^n\mc{P}$ to bounded chain complexes of bounded objects in
 $(C^{\rm{q}}_{\rm{b}})^n\mc{P}$. This is exactly the statement that $F_n$ restricts to a functor
 \[
  F_n\colon (C^{\rm{q}}_{\rm{b}})^n\mc{P} \rightarrow (C^{\rm{q}}_{\rm{b}})^n\mc{P},
 \]
 which was to be proved.
\end{proof}

\begin{rem}
 \label{directions}
 Throughout this chapter we work with the inductive definition of
 $(C_{\rm{b}}^{\rm{q}})^n\mc{P}$, that is
 $(C_{\rm{b}}^{\rm{q}})^n\mc{P} \defeq C_{\rm{b}}^{\rm{q}}((C_{\rm{b}}^{\rm{q}})^{n-1}\mc{P})$
 for $n > 1$. As explained in Remark \ref{commute}, one can instead think of objects in
 $(C_{\rm{b}}^{\rm{q}})^n\mc{P}$ as $\bZ_{\ge 0}^n$-graded objects of $\mc{N}$
 (together with certain differentials) without specifying the order of directions in which the
 objects have been obtained in the inductive definition. The purpose of this remark is to convince
 the reader that our construction of the functors $F_n$ given in this section
 (and hence our construction of exterior powers in the sequel) does not depend on the order of
 directions either. Rather than including a complete proof, we sketch the idea in the case $n=2$.
 Let $F_0 = F$ be as before. The functor $F_2$ is defined as
 \[
  N_hF_1\Gamma_h = N_hN_vF_0\Gamma_v\Gamma_h,
 \]
 where the indices $h$ and $v$ indicated the horizontal and vertical directions respectively.
 It is quite straightforward to see that the composition $N_hN_v$ sends a bisimplicial object $C$
 to the double complex whose objects are obtained from the corresponding objects of $C$ by dividing
 out the images of all of the horizontal and vertical degeneracy maps.
 This latter description of course does not depend on the order of $N_h$ and $N_v$.
 One can show that the same holds for $\Gamma_h$ and $\Gamma_v$ by a similar argument,
 or just by recalling that $\Gamma_h$ and $\Gamma_v$ are adjoint to $N_h$ and $N_v$ respectively.
\end{rem}

We can now describe the exterior power functors that we will use to induce operations on
higher $K$-groups. The following example is the motivation for our work so far.

\begin{mainex}
\label{main example}
 Let $\proj(R)$ be the category of finitely generated projective modules over a commutative ring $R$.
 This category is both idempotent complete and split exact.
 For each $r > 0$, the usual exterior power functor
 $\Lambda^r \colon \proj (R) \rightarrow \proj (R)$ satisfies the hypotheses of
 Corollary~\ref{operations restrict} ($\Lambda^r$ has degree~$r$).
 We therefore have induced functors
 \[
  \Lambda^r_n\colon (C^{\rm{q}}_{\rm{b}})^n\proj(R) \rightarrow
  (C^{\rm{q}}_{\rm{b}})^n\proj(R)
 \]
 for all $n \ge 0$.
\end{mainex}

In general, the complex $N\Lambda^r\Gamma(P_{\cd})$ is difficult to write down explicitly.
The paper \cite{SatK} gives an algorithm that addresses this problem.
We conclude this section by computing $N\Lambda^r\Gamma(P_{\cd})$
for a very simple choice of $P_{\cd}$.

\begin{ex}
\label{ex1}
 Let $\varphi \colon P \rightarrow Q$ be an isomorphism of invertible modules over some
 commutative ring $R$, considered as an acyclic complex concentrated in degrees $0$ and $1$:
 \[
  \begin{array}{ccccccc}
   0
   & \longrightarrow
   & P
   & \stackrel{\varphi}{\longrightarrow}
   & Q
   & \longrightarrow
   & 0, \\
   {\scriptstyle 2} & & {\scriptstyle 1} & & {\scriptstyle 0} && {\scriptstyle-1}
  \end{array}
 \]
 or
 $P \stackrel{\varphi}{\longrightarrow} Q$ for short.
 In Lemma 2.2 of \cite{KockKoszul}, the second author gives an explicit calculation of
 $N\Lambda^r\Gamma (P \stackrel{\varphi}{\longrightarrow } Q)$ in terms of higher cross-effect
 functors (in fact, he does this for more general $P, Q$ and $\varphi$).
 Specifically, in degree $n$ we have:
 \[
  N\Lambda^r\Gamma (P \stackrel{\varphi}{\longrightarrow } Q)_n =
  \operatorname{cr}_n(\Lambda^r)(P, \dots, P) \oplus \operatorname{cr}_{n+1}(\Lambda^r)(Q,P, \dots, P).
 \]
  We do not wish to expound on the theory of cross-effect functors here: the interested reader can see
  \cite{EM_cross_effects} or section 1 of \cite{KockKoszul}. Instead we merely quote the properties
  of $\operatorname{cr}_n(\Lambda^r)$ that we need.
  Firstly, $\operatorname{cr}_n(\Lambda^r) = 0$ for $n > r$, as $\Lambda^r$ is of degree $r$;
  secondly, $\operatorname{cr}_r(\Lambda^r)(P_1, \dots, P_r) = P_1 \otimes \cdots \otimes P_r$;
  thirdly, if $n < r$ and if $P_1, \dots, P_n$ are all invertible, then $\operatorname{cr}_n(\Lambda^r)(P_1, \dots, P_n) = 0$.
  From these we see that:
  \[
   N\Lambda^r\Gamma (P \stackrel{\varphi}{\longrightarrow } Q)_n =
   \begin{cases}
    P^{\otimes r} & \text{if $n = r$},\\
    Q \otimes P^{\otimes (r-1)} & \text{if $n = r-1$},\\
    0 & \text{otherwise}.
  \end{cases}
  \]
  We can also read off the differential $P^{\otimes r} \rightarrow Q \otimes P^{\otimes (r-1)}$
  from Lemma 2.2 of \textit{loc.\@ cit.\@}: it is
  $\operatorname{cr}_r(\Lambda^r)(\varphi, 1, \dots ,1) = \varphi \otimes 1 \otimes \cdots \otimes 1$.
  So $N\Lambda^r\Gamma (P \stackrel{\varphi}{\longrightarrow } Q)$ is the acyclic complex
  \[
  \begin{array}{ccccccc}
   0
   & \longrightarrow
   & P \otimes P^{\otimes (r-1)}
   & \stackrel{\varphi \otimes 1}{\longrightarrow}
   & Q \otimes P^{\otimes (r-1)}
   & \longrightarrow
   & 0. \\
   {\scriptstyle r+1} & & {\scriptstyle r} & & {\scriptstyle r-1} && {\scriptstyle r-2}
  \end{array}
 \]
 Of particular note is the special case in which $P$ and $Q$ are equal to $R$
 considered as a module over itself, and $\varphi$ is given by multiplication by some $x \in R^{\times}$.
 Then $N\Lambda^r\Gamma (R \stackrel{x}{\longrightarrow } R)$ is equal to the complex
 $(R \stackrel{x}{\longrightarrow } R)$, shifted so that it is concentrated in degrees $r$ and $r-1$.
\end{ex}

\section{Operations on binary multicomplexes}
\label{Operations on binary multicomplexes}

The goal of this section is to extend the functors $F_n$ between multicomplexes of the previous section
to functors of \textit{binary} multicomplexes.
Together with the results of the previous section, this shows that if $P_{\cd}$ is a bounded acyclic binary multicomplex, then so is $\Lambda_n^r(P_{\cd})$.

\medskip

Categories of binary complexes are not so well behaved as categories of complexes.
In particular, the category of bounded acyclic binary complexes in a split exact category is not split exact.

\begin{ex}
\label{binary not projective}
Let $P$ be an object in a split exact category $\mc{P}$.
The following diagram is an admissible epimorphism
in the category of bounded acyclic binary complexes in $\mc{P}$:
\[
 \xymatrix{
 P \binr{i_1}{i_2} \ar[d]^1 & P \oplus P \binr{p_1}{p_2}
 \ar[d]^{\Sigma} & P \ar[d] \\
 P \binr{1}{1} & P \binr{}{} & 0
 }
\]
(where $i_1$ and $i_2$ are the inclusions into the first and second summands, $p_1$ and $p_2$ are the corresponding projections,
and $\Sigma = p_1 + p_2$).
But there is no splitting $P \rightarrow P \oplus P$ that commutes with both the top and bottom differentials,
so $B^{\rm{q}}_{\rm{b}}\mc{P}$ is not split exact.
\end{ex}

This difficulty means that we cannot define
exterior powers of binary multicomplexes recursively in exactly the way we have for multicomplexes.
This problem is resolvable:
we shall show that if $P_{\cd}$ is an object of $(C^{\rm{q}}_{\rm{b}})^n\proj(R)$,
then the objects of $\Lambda^r_n(P_{\cd})$ are independent of the differentials of $P_{\cd}$. Therefore it will
make sense to define the exterior power of a binary complex by applying the exterior powers we developed
above individually to the two differentials of the binary complex. The resulting pair of complexes will have
the same objects, so we consider them as a binary complex.

\begin{lem}
\label{underlying graded}
 Let $F \colon \mc{P} \rightarrow \mc{P}$ be a covariant functor on an idempotent complete exact category.
 If $P_{\cd}$ and $Q_{\cd}$ are chain complexes with the
 same underlying graded object, then $NF\Gamma (P_{\cd})$ and $NF\Gamma (Q_{\cd})$ have the same underlying graded object.
\end{lem}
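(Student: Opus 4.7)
The plan is to trace carefully through the construction of $NF\Gamma$ and identify at each stage which data depend on the differential $d$ of $P_\cd$ and which depend only on the underlying graded pieces $(P_p)_p$. The key observation will be that although face maps in the simplicial object $\Gamma(P_\cd)$ depend on $d$, the \emph{degeneracy} maps do not, and the Moore complex functor $N$ uses only the degeneracies to form the quotient that defines its objects.

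First I would unpack Definition~\ref{simp}. In each simplicial degree $n$, the object
\[
    \Gamma(P_\cd)_n = \bigoplus_{p \le n} \bigoplus_{\eta \colon [n] \twoheadrightarrow [p]} P_p \langle \eta \rangle
\]
manifestly depends only on the graded pieces $P_p$, not on the differential. Next, I would inspect the formula for $\Gamma(\alpha,\eta)$ in the case that $\alpha$ is a degeneracy $\sigma_j \colon [n+1] \twoheadrightarrow [n]$: since both $\alpha$ and $\eta$ are then surjective, the epi-monic factorisation of $\eta\alpha$ has $q = p$ and $\varepsilon = \id$, so $\Gamma(\sigma_j,\eta)$ is always just the identity map between the respective summands $P_p\langle\eta\rangle$ and $P_p\langle \eta'\rangle$. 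Thus the degeneracy maps of $\Gamma(P_\cd)$ are built out of identity maps and the direct-sum structure alone, and in particular do not involve $d$. (By contrast, face maps involve the differential $d_p$, which is exactly why acyclicity of $P_\cd$ is used elsewhere.)

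Applying $F$ to a map between identical summands yields the identity again, so the degeneracy maps of $F(\Gamma(P_\cd))$ coincide with those of $F(\Gamma(Q_\cd))$ whenever $P_\cd$ and $Q_\cd$ share the same underlying graded object. Consequently, by Definition~\ref{moore complex}, the degenerate subcomplex
\[
    D(F\Gamma(P_\cd))_n = \sum_{i=0}^n \im\bigl(\sigma_i \colon F\Gamma(P_\cd)_{n-1} \to F\Gamma(P_\cd)_n\bigr)
\]
depends only on the underlying graded object of $P_\cd$, and therefore so does the quotient $NF\Gamma(P_\cd)_n = F\Gamma(P_\cd)_n / D(F\Gamma(P_\cd))_n$.

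The main subtlety to be careful about is that $F$ is not assumed additive, so one cannot commute $F$ with the direct-sum decomposition of $\Gamma(P_\cd)_n$; however, the argument sidesteps this because we only need $F$ to send identity morphisms to identity morphisms and to respect the equality of objects on the nose, both of which hold for any functor. A minor technical point is interpreting $N$ in the exact-categorical setting via an ambient abelian embedding, as done just after Definition~\ref{moore complex}; the identification of degenerate subcomplexes takes place inside this abelian category and is inherited by $\mc{P}$.
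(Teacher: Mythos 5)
Your proof is correct and follows essentially the same route as the paper's: identify that the objects and degeneracy maps of $\Gamma(P_\cd)$ are independent of the differential (via the epi-monic factorisation of a composite of surjections), observe that these data are preserved under any functor $F$, and conclude via the description of $N$ as the quotient by the degenerate subcomplex. Your extra remark that $F$ need not be additive — so one cannot push $F$ through the direct-sum decomposition, only through the equality of morphisms — is a sensible clarification of the one sentence in your argument that, read literally, suggests applying $F$ summand-by-summand, but it does not change the substance.
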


\begin{proof}
 Let $B \in \mc{P}^{\Delta^{\rm{op}}}$ be a simplicial object. The objects of the complex $N(B)$
 are given by
 \[
  N(B)_n \defeq B_n \bigg/ \left(\sum_{i=0}^n \operatorname{Im} (\sigma_i \colon B_{n-1} \rightarrow B_n) \right),
 \]
 (after embedding $\mc{P}$ in a suitable abelian category) where the $\sigma_i$ are the degeneracies of $B$.
 It is enough therefore to show that the objects and
 degeneracy maps of $F\Gamma(P_{\cd})$ do not depend upon the differential of $P_{\cd}$.
 The objects of $\Gamma(P_{\cd})$ are
 direct sums of the objects of $P_{\cd}$, indexed by the surjections out of $[n]$ in $\Delta$, and do not
 depend on the differential. The degeneracy operator $\sigma_i \colon \Gamma(P_{\cd})_{n-1} \rightarrow \Gamma(P_{\cd})_n$
 is the image of the degeneracy map $\eta_i \colon [n] \rightarrow [n-1]$ in $\Delta$.
 For any surjection $\eta \colon [n-1] \twoheadrightarrow [p]$,
 the composition $\eta \eta_i$ is also a surjection, so the monomorphism in the epi-monic factorisation of
 $\eta \eta_i$ is just the identity on $[p]$. Therefore, the degeneracy operator
 $\sigma_i$ acts on $\Gamma(P_{\cd})_{n-1}$ by sending the summand corresponding to the surjection
 $\eta$ by the identity to the summand of $\Gamma(P_{\cd})_n$ corresponding to the surjection $\eta_i\eta$.
 Thus $\sigma_i$ does not depend on the differential of $P_{\cd}$. Since the objects and degeneracies
 of $\Gamma(P_{\cd})$ only depend on the underlying graded object of $P_{\cd}$, the same is true of $F\Gamma(P_{\cd})$.
 Therefore the objects of $NF\Gamma(P_{\cd})$ only depend on the underlying graded object as well.
\end{proof}

\begin{cor}
\label{underlying equal}
  Let $n \ge 1$, and let $P_{\cd}$, $Q_{\cd}$ be objects of $(C^{\rm{q}}_{\rm{b}})^n\mc{P}$.
  If $P_{\cd}$ and $Q_{\cd}$ have the same underlying $\bZ^n$-graded object, then
  $F_n(P_{\cd})$ and $F_n(Q_{\cd})$ have the same underlying $\bZ^n$-graded object.
\end{cor}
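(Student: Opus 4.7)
The plan is to reduce to Lemma~\ref{underlying graded} by unwinding the iterated construction of $F_n$ until only the base functor $F$ remains. Using Remark~\ref{directions} inductively, I would rewrite
\[
F_n \;=\; (N_1\circ\cdots\circ N_n)\circ F \circ (\Gamma_1\circ\cdots\circ\Gamma_n),
\]
where $\Gamma_i$ and $N_i$ are respectively the Dold--Kan and normalisation functors acting in direction $i$. The main obstacle will be justifying this commutation of $\Gamma_i$ past $N_j$ for $i\neq j$ in the iterated setting, but that is a direct generalisation of the two-direction argument sketched in Remark~\ref{directions}.

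Next, I would set $S := \Gamma_1\cdots\Gamma_n(P_{\cd})$, an $n$-fold simplicial object in $\mc{P}$. Iterating Definition~\ref{simp},
\[
S_{k_1,\ldots,k_n} \;=\; \bigoplus_{p_1 \le k_1,\ldots,p_n \le k_n}\;\bigoplus_{\eta_1,\ldots,\eta_n} P_{p_1,\ldots,p_n}\langle \eta_1,\ldots,\eta_n\rangle
\]
depends only on the underlying $\bZ^n$-graded object of $P_{\cd}$. By the reasoning in the proof of Lemma~\ref{underlying graded} applied in each simplicial direction, each degeneracy of $S$ acts as the identity between corresponding summands (reindexing a single $\eta_j$) and so also depends only on the underlying $\bZ^n$-graded object. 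Only the face maps record the differentials of $P_{\cd}$, and these will play no further role.

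Applying $F$ levelwise yields a simplicial object $F\circ S$ whose objects $F(S_{k_1,\ldots,k_n})$ and degeneracies $F(\sigma)$ are functorial in the corresponding data of $S$, hence again depend only on the underlying $\bZ^n$-graded object of $P_{\cd}$. The $n$-fold normalisation $(N_1\cdots N_n)(F\circ S)$ at position $(k_1,\ldots,k_n)$ is the quotient of $F(S_{k_1,\ldots,k_n})$ by the sum of the images of all direction-$j$ degeneracies for $j=1,\ldots,n$, and therefore depends only on the underlying $\bZ^n$-graded object of $P_{\cd}$ as well. Running the identical computation with $Q_{\cd}$ in place of $P_{\cd}$ then produces the same $\bZ^n$-graded object, as required.
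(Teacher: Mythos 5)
Your proposal is correct, but it takes a genuinely different route from the paper's proof, which is the single sentence ``This is a straightforward induction on $n$.''  Where the paper peels off one direction at a time using $F_n = NF_{n-1}\Gamma$ and the inductive hypothesis, you unwind all $n$ layers at once to the composite $F_n = N_1\cdots N_n\circ F\circ\Gamma_1\cdots\Gamma_n$, compute the $n$-fold simplicial object $S$ explicitly, and then run the argument of Lemma~\ref{underlying graded} simultaneously in all directions.  This is sound, and it has one genuine advantage: the inductive version silently needs the inductive hypothesis to control not only the underlying graded \emph{objects} of $F_{n-1}$ but also the underlying graded \emph{structure of the morphisms} $F_{n-1}(\sigma_i)$ between them (since after applying $F_{n-1}$ the levels of $\Gamma(P_\cd)$ and $\Gamma(Q_\cd)$ are only equal as graded objects, not as multicomplexes, and $N$ quotients by the images of those morphisms); your single-stroke unwinding makes this point manifest and deals with it uniformly.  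One small correction: the identity $F_n = N_1\cdots N_n\circ F\circ\Gamma_1\cdots\Gamma_n$ is \emph{not} obtained by ``commuting $\Gamma_i$ past $N_j$'' --- it is just the literal unwinding of the recursion $F_n = NF_{n-1}\Gamma$, with each layer's $N$ and $\Gamma$ acting levelwise in the outer directions, so no commutation is needed and Remark~\ref{directions} is not required for this step.  What you \emph{do} use from Remark~\ref{directions} is the claim that the iterated normalisation $N_1\cdots N_n$ applied to an $n$-fold simplicial object equals the quotient by the sum of the images of all degeneracy maps in all $n$ directions at once (the paper sketches this only for $n=2$), and it is worth being explicit that this is the ingredient being borrowed.
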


\begin{proof}
 This is a straightforward induction on $n$.
\end{proof}

We are now ready at last to define exterior powers of acyclic binary multicomplexes. Let $P_{\cd}$
be an $n$-dimensional, bounded, acyclic binary multicomplex in $\mc{P}$, i.e.\@, an
object of $(B^{\rm{q}}_{\rm{b}})^n\mc{P}$. We view the commutativity
constraints on the differentials of $P_{\cd}$ in the same way as described in Remark
\ref{commute}: as a collection of $2^n$ objects of $(C^{\rm{q}}_{\rm{b}})^n\mc{P}$.

\begin{defn}
\label{binary functor}
 For a functor $F$ that satisfies the hypotheses of Corollary~\ref{operations restrict}, we define induced functors
  \[
  F_n\colon (B^{\rm{q}}_{\rm{b}})^n\mc{P} \longrightarrow
  (B^{\rm{q}}_{\rm{b}})^n\mc{P}
 \]
 by the following procedure.
 Let $P_{\cd}$ be an object of $(B^{\rm{q}}_{\rm{b}})^n\mc{P}$,
 viewed as a collection of $2^n$ (non-binary) multicomplexes in the manner described above.
 Since these multicomplexes all have the same underlying $\bZ^n$-graded object,
 by Corollary~\ref{underlying equal} the same is true of the $2^n$ multicomplexes obtained by applying $F_n$
 (the functor defined on $(C^{\rm{q}}_{\rm{b}})^n\mc{P}$ in Corollary~\ref{operations restrict})
 to the multicomplexes describing $P_{\cd}$.
 We define $F_n(P_{\cd})$ to be the binary multicomplex
 described by the resulting collection of multicomplexes.
\end{defn}

We now return to our main example of interest: the exterior power functors.
Let $R$ be a commutative ring. We have seen in \ref{main example} that the
usual exterior power operations $\Lambda^r$ satisfy the hypotheses of \ref{operations restrict},
so the exterior powers
 \[
  \Lambda^r_n\colon (C^{\rm{q}}_{\rm{b}})^n\proj(R) \rightarrow
  (C^{\rm{q}}_{\rm{b}})^n\proj(R)
 \]
lift to exterior powers of binary multicomplexes
 \[
  \Lambda^r_n\colon (B^{\rm{q}}_{\rm{b}})^n\proj(R) \rightarrow
  (B^{\rm{q}}_{\rm{b}})^n\proj(R)
 \]
for all $n \ge 0$ and $r \ge 1$.

\section{Simplicial tensor products}
\label{Simplicial tensor products}

In this section we develop a tensor product for multicomplexes that is compatible with the
exterior powers we have defined in the previous sections.
We show that the class of this product vanishes in the appropriate $K$-group,
which will eventually be the key to showing that exterior power operations provide
homomorphisms on higher $K$-groups.

\subsection{Constructing simplicial tensor products}
In this subsection, using the Dold--Kan correspondence again,
we construct the so-called simplicial tensor product of multi-complexes and prove that it preserves
acyclicity and boundedness of complexes.

Although we are ultimately interested in the products induced from the usual tensor products of
modules (or sheaves), it is convenient in this section to work in the rather more abstract setting
of a generic idempotent complete exact category with some form of well-behaved tensor product.

\begin{defn}
\label{tensor}
 Let $\mc{P}$ be an idempotent complete exact category.
 We say that a bi-additive bifunctor $\otimes\colon \mc{P} \times \mc{P} \rightarrow \mc{P}$
 is a \emph{tensor product} if $P \otimes -$ and $- \otimes P$ are exact functors on $\mc{P}$
 for each object $P$ of $\mc{P}$.
\end{defn}

For the rest of this section, we fix such a category $\mc{P}$ with a tensor product~$\otimes$.
The reader may wish to keep in mind the example $\mc{P} = \proj(R)$,
with the usual tensor product of $R$-modules.

\begin{defn}
\label{usual tensor}
 Let $P$ be an object of $\mc{P}$, and let $(Q_{\cd}, d_Q), (R_{\cd}, d_R)$ be chain complexes in $\mc{P}$.

 \begin{enumerate}
  \item By $P \otimes Q_{\cd}$ we mean the chain complex whose $i^{\rm{th}}$ object is $P \otimes Q_i$,
  with differential $1 \otimes d_Q$. The complex $Q_{\cd} \otimes P$ is defined analogously.
  \item By $\operatorname{Tot}(Q_{\cd} \otimes R_{\cd})$ we mean the chain complex formed by taking
  the total complex of the bicomplex whose $(i,j)^{\rm{th}}$ object is $Q_i \otimes R_j$,
  and whose differentials are $d^{\rm{ver}} = d_Q \otimes (-1)^j$ and $d^{\rm{hor}} = 1 \otimes d_R$.
  This bicomplex's $i^{\rm{th}}$ row is $Q_i \otimes R_{\cd}$
  and its $j^{\rm{th}}$ column is $Q_{\cd} \otimes R_j$
 \end{enumerate}
\end{defn}

It is clear that if $Q_{\cd}$ and $R_{\cd}$ are bounded complexes, then the products
$P \otimes Q_{\cd}$ and $\operatorname{Tot}(Q_{\cd} \otimes R_{\cd})$ are bounded as well.
We'll need a couple of properties of these products.

\begin{lem}
\label{acyclic usual tensor}
 Let $P_{\cd}$ be a chain complex in $\mc{P}$.
 \begin{enumerate}
  \item The functor
   \[
    \begin{array}{crcl}
     P_{\cd} \otimes - \colon & \mc{P} & \rightarrow & C\mc{P} \\
     & Q & \mapsto & P_{\cd} \otimes Q
    \end{array}
   \]
   is exact.
  \item If $Q_{\cd}$ is an acyclic complex in $\mc{P}$,
   then the complex $\operatorname{Tot}(P_{\cd} \otimes Q_{\cd})$ is acyclic.
 \end{enumerate}
\end{lem}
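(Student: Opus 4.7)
Part (1) should be essentially immediate. Exactness of a functor valued in $C\mc{P}$ is defined degreewise: a short exact sequence in $C\mc{P}$ is one that is short exact in each degree. So given a short exact sequence $0 \to Q' \to Q \to Q'' \to 0$ in $\mc{P}$, applying $P_{\cd} \otimes -$ produces in degree $i$ the sequence $0 \to P_i \otimes Q' \to P_i \otimes Q \to P_i \otimes Q'' \to 0$, which is exact by the hypothesis that each $P_i \otimes -$ is exact (Definition~\ref{tensor}). This settles part~(1).

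For part (2), the strategy is the usual double-complex / acyclic assembly argument adapted to the exact category setting. Consider the bicomplex $P_{\cd} \otimes Q_{\cd}$. Its $i$-th row is $P_i \otimes Q_{\cd}$, which is obtained by applying the exact functor $P_i \otimes -$ to the acyclic complex $Q_{\cd}$; hence every row is acyclic. I would then invoke the standard acyclic assembly lemma for first-quadrant double complexes (see, e.g., Weibel \cite{WeiHom}, 2.7.3): the total complex of a bounded-below double complex with acyclic rows is acyclic. This can be seen either via the column-filtration spectral sequence (whose $E_1$-page is the row homology, which vanishes) or by a direct induction on the column filtration, observing that $\operatorname{Tot}(P_{\cd} \otimes \tau_{\le n} Q_{\cd})$ differs from $\operatorname{Tot}(P_{\cd} \otimes \tau_{\le n-1} Q_{\cd})$ by a mapping cone that is manifestly contractible.

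To make this argument legitimate in an exact category rather than an abelian one, I would use the Gabriel--Quillen embedding: because $\mc{P}$ is idempotent complete (all exact categories in this paper are), $\mc{P}$ sits inside an ambient abelian category $\mc{A}$ in such a way that the acyclic complexes in $\mc{P}$ are precisely those which are long exact in $\mc{A}$ (see footnote~\ref{fn}). The tensor product extends to this embedding, the double complex $P_{\cd} \otimes Q_{\cd}$ has rows that are exact in $\mc{A}$, and the standard assembly argument produces exactness of the total complex in $\mc{A}$; restricting back to $\mc{P}$ gives the required acyclicity.

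The main obstacle is purely bookkeeping: making sure that the classical abelian-category double-complex lemma is applied correctly given the exact-category setup, and verifying that the intermediate constructions (truncations and cones of $Q_{\cd}$) remain in $\mc{P}$ so that the induction stays inside the exact category. Once the embedding into $\mc{A}$ is in hand, however, there is nothing subtle left beyond the classical assembly argument.
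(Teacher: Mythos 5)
Your proposal is correct and follows essentially the same route as the paper: part (1) is settled degreewise using exactness of each $P_i \otimes -$, and part (2) observes that the rows of the bicomplex are acyclic and then invokes Weibel's acyclic assembly lemma (2.7.3) in the ambient abelian category, using that an idempotent complete exact category supports long exact sequences. The extra remarks about spectral sequences and truncation-based induction are just alternative ways of proving the assembly lemma and do not change the substance.
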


\begin{proof}
 The first part is straightforward, as each $P_i \otimes -$ is an exact functor.
 For the second part, if $Q_{\cd}$ is acyclic, then, since acyclic complexes are spliced together
 from short exact sequences, each of the complexes $P_n \otimes Q_{\cd}$ is acyclic.
 Therefore the rows of the bicomplex $P_{\cd} \otimes Q_{\cd}$ are acyclic.
 Our complexes are non-negative, so the total complex of this bicomplex is exact in an ambient
 abelian category by the acyclic assembly lemma (\cite{WeiHom}, 2.7.3).
 Since $\mc{P}$ is idempotent complete it supports long exact sequences,
 so $\operatorname{Tot}(P_{\cd} \otimes Q_{\cd})$ is acyclic in $\mc{P}$.
\end{proof}

To define the simplicial tensor product of complexes we need to go beyond regular simplicial objects.
A \emph{bisimplicial object} $B$ in $\mc{P}$ is a functor
$B \colon \Delta^{\rm{op}} \times \Delta^{\rm{op}} \rightarrow \mc{P}$. The \emph{diagonal}
of $B$ is the simplicial object defined by pre-composition with the usual diagonal functor
$\operatorname{diag} \colon \Delta^{\rm{op}} \rightarrow \Delta^{\rm{op}} \times \Delta^{\rm{op}}$:
\[
 \operatorname{diag}(B) \defeq B \circ \operatorname{diag} \colon
\Delta^{\rm{op}} \rightarrow \Delta^{\rm{op}} \times \Delta^{\rm{op}} \rightarrow \mc{P}.
\]
If $C$ and $D$ are simplicial objects in $\mc{P}$, then
we define $C \otimes D$ to be the bisimplicial object given by
$(C \otimes D)([m],[n]) = C_m \otimes D_n$ and
$(C \otimes D)(\alpha,\beta) = C(\alpha) \otimes D(\beta)$ for
$\alpha \colon [m] \rightarrow [m']$, $\beta\colon [n] \rightarrow [n']$.
We can now push the tensor product around the Dold--Kan correspondence.

\begin{defn}
\label{simplicial tensor product}
 The \emph{simplicial tensor product} of chain complexes $P_{\cd}$ and $Q_{\cd}$
 in $\mc{P}$ is defined to be
 \[
  P_{\cd} \otimes_{\Delta} Q_{\cd} \defeq N(\operatorname{diag}(\Gamma(P_{\cd}) \otimes \Gamma(Q_{\cd}))).
 \]
\end{defn}

A word of warning here: although the tensor product is an additive functor in each variable,
the complex $P_{\cd} \otimes_{\Delta} Q_{\cd}$ is not equal to the product complex
$\operatorname{Tot}(P_{\cd} \otimes Q_{\cd})$ discussed above. They are related by
the Eilenberg--Zilber theorem, which we shall use in the proof of the following lemma.

\begin{lem}
\label{comparison}
 Let $P_{\cd}$ and $Q_{\cd}$ be chain complexes in $\mc{P}$, and suppose that at least one of them is acyclic.
 Then $P_{\cd} \otimes_{\Delta} Q_{\cd}$ is acyclic in $\mc{P}$.
\end{lem}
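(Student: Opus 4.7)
The plan is to reduce the statement to Lemma~\ref{acyclic usual tensor}(2) via the Eilenberg--Zilber theorem, exactly the same trick that was invoked (for a related purpose) in the earlier example.

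First I would recall that, for a bisimplicial object $B$ in $\mc{P}$, the Eilenberg--Zilber theorem (\cite{May}, \S29) gives a natural chain homotopy equivalence
\[
    N(\mathrm{diag}(B)) \;\simeq\; \mathrm{Tot}(N_hN_vB),
\]
where $N_h$, $N_v$ are the normalized Moore complex functors applied in each of the two simplicial directions. Applying this with $B = \Gamma(P_{\cd}) \otimes \Gamma(Q_{\cd})$ and using the Dold--Kan identities $N\Gamma(P_{\cd}) = P_{\cd}$ and $N\Gamma(Q_{\cd}) = Q_{\cd}$, I get a chain homotopy equivalence
\[
    P_{\cd} \otimes_{\Delta} Q_{\cd} \;=\; N(\mathrm{diag}(\Gamma(P_{\cd}) \otimes \Gamma(Q_{\cd}))) \;\simeq\; \mathrm{Tot}(P_{\cd} \otimes Q_{\cd}).
\]
(I should check that the Eilenberg--Zilber maps really do take values in $\mc{P}$; this is automatic because they are natural in the bisimplicial object and are additive in each variable, so they are built from the objects of $\Gamma(P_{\cd}) \otimes \Gamma(Q_{\cd})$, which lie in $\mc{P}$.)

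Next, assuming without loss of generality that $Q_{\cd}$ is the acyclic complex, Lemma~\ref{acyclic usual tensor}(2) shows that $\mathrm{Tot}(P_{\cd} \otimes Q_{\cd})$ is acyclic in $\mc{P}$. Embedding $\mc{P}$ into an ambient abelian category $\mc{A}$ in which $\mc{P}$ is closed under direct summands, acyclicity in $\mc{P}$ coincides with exactness in $\mc{A}$ (by the footnote to the definition of acyclicity, since $\mc{P}$ is idempotent complete). Chain homotopy equivalences preserve homology computed in $\mc{A}$, so the homotopy equivalence above forces $P_{\cd} \otimes_{\Delta} Q_{\cd}$ to be exact in $\mc{A}$ as well.

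Finally, I would note that the objects of $P_{\cd} \otimes_{\Delta} Q_{\cd}$ lie in $\mc{P}$: the objects of $\Gamma(P_{\cd}) \otimes \Gamma(Q_{\cd})$ are finite direct sums of tensors of objects of $P_{\cd}$ and $Q_{\cd}$ (hence in $\mc{P}$), the same is true after taking the diagonal, and the normalized Moore complex of a simplicial object in $\mc{P}$ again has objects in $\mc{P}$, as already observed in the paper. Combined with exactness in $\mc{A}$ and idempotent completeness of $\mc{P}$, this yields the desired acyclicity in $\mc{P}$. The main point to be careful about is the first step -- that the Eilenberg--Zilber equivalence, usually stated in the abelian setting, descends to the exact subcategory $\mc{P}$ -- but this causes no trouble once we work in the ambient abelian category and then invoke idempotent completeness at the end.
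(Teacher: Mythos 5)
Your proof is correct and follows exactly the same route as the paper: apply the Eilenberg--Zilber theorem to pass from $N\operatorname{diag}(\Gamma(P_{\cd})\otimes\Gamma(Q_{\cd}))$ to $\operatorname{Tot}(P_{\cd}\otimes Q_{\cd})$, then invoke Lemma~\ref{acyclic usual tensor}(2) and the fact that an idempotent complete exact category supports long exact sequences to transfer acyclicity across the homotopy equivalence. The extra care you take in checking that the Eilenberg--Zilber equivalence and the resulting objects stay inside $\mc{P}$ is sensible bookkeeping, but it amounts to the same observations the paper makes implicitly.
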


\begin{proof}
 We suppose, without loss of generality, that $Q_{\cd}$ is acyclic.
 By the Eilenberg--Zilber theorem (\cite{May} \S 29), the simplicial tensor product
 $P_{\cd} \otimes_{\Delta} Q_{\cd} = N\operatorname{diag}(\Gamma(P_{\cd}) \otimes \Gamma(Q_{\cd}))$
 is homotopy equivalent to $\operatorname{Tot}(P_{\cd} \otimes Q_{\cd})$,
 and is therefore acyclic by Lemma~\ref{acyclic usual tensor} $(2)$ and Lemma~\ref{acyclic and split}.
\end{proof}

The following is an analogue of Lemma~\ref{length} for the simplicial tensor product.

\begin{lem}
\label{tensor bounded}
 If $P_{\cd}$ and $Q_{\cd}$ are both bounded chain complexes in $\mc{P}$, of length $k$ and $l$ respectively.
 Then $P_{\cd} \otimes_{\Delta} Q_{\cd}$ is of length at most $kl$ and so is bounded as well.
\end{lem}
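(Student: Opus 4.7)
The strategy is to analyze $N(\mathrm{diag}(\Gamma(P_\cd) \otimes \Gamma(Q_\cd)))$ directly in degree $n$ using the explicit description of $\Gamma$ from Definition~\ref{simp}, and to show that it vanishes once $n$ exceeds $k + l - 2$. Writing out the $n$-th level of the diagonal bisimplicial object gives
\[
\mathrm{diag}(\Gamma(P_\cd) \otimes \Gamma(Q_\cd))_n \;=\; \Gamma(P_\cd)_n \otimes \Gamma(Q_\cd)_n \;=\; \bigoplus_{(\eta,\eta')} P_p \otimes Q_q,
\]
indexed by pairs of surjections $\eta\colon[n] \twoheadrightarrow [p]$ and $\eta'\colon[n] \twoheadrightarrow [q]$. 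Only pairs with $p \le k-1$ and $q \le l-1$ contribute, since $P_\cd$ and $Q_\cd$ are supported in those ranges.

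The next step is to identify the degenerate subobject at level $n$. From the formula for $\Gamma$ on morphisms in Definition~\ref{simp}, the diagonal degeneracy $\sigma_i^{\mathrm{diag}} = \sigma_i^h \otimes \sigma_i^v$ sends the summand indexed by $(\eta,\eta')$ at level $n-1$ identically onto the summand indexed by $(\eta \circ \eta_i,\, \eta' \circ \eta_i)$, where $\eta_i\colon [n] \twoheadrightarrow [n-1]$ is the $i$-th codegeneracy. Since a surjection $\mu\colon [n] \twoheadrightarrow [p]$ factors through $\eta_i$ exactly when $\mu(i) = \mu(i+1)$, the summand at level $n$ indexed by $(\tilde\eta,\tilde\eta')$ lies in the degenerate subcomplex precisely when some index $i \in \{0,\ldots,n-1\}$ satisfies $\tilde\eta(i) = \tilde\eta(i+1)$ and $\tilde\eta'(i) = \tilde\eta'(i+1)$. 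An order-preserving surjection $[n] \twoheadrightarrow [p]$ has exactly $p$ ``jump'' indices $i$ at which $\mu(i) < \mu(i+1)$, so a nondegenerate pair $(\tilde\eta,\tilde\eta')$ requires every $i \in \{0,\ldots,n-1\}$ to be a jump for $\tilde\eta$ or $\tilde\eta'$. This forces $n \le p + q \le (k-1)+(l-1) = k+l-2$, so $P_\cd \otimes_\Delta Q_\cd$ is supported in degrees $\le k+l-2$, giving length at most $k+l-1$, which is bounded by $kl$ for $k, l \ge 1$.

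The main obstacle is the careful book-keeping needed to track how the diagonal degeneracies act on the Dold--Kan summands and to correctly describe the degenerate subcomplex in terms of surjection combinatorics. Once this identification is in place, the counting argument is elementary and in fact produces the sharper bound $k+l-1$; the weaker bound $kl$ in the statement is what suffices for later use.
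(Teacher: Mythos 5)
Your proof is correct and follows essentially the same route as the paper: identify the degree-$n$ piece of $N\operatorname{diag}(\Gamma(P_\cd)\otimes\Gamma(Q_\cd))$ as a sum of $P_p\otimes Q_q$'s indexed by pairs of surjections $[n]\twoheadrightarrow[p]$, $[n]\twoheadrightarrow[q]$ that together give an injection $[n]\hookrightarrow[p]\times[q]$, and then observe that such indexing data forces $n\le p+q$. The paper simply quotes Lawson's MathOverflow answer \cite{LawsonSimp} for this explicit description of the simplicial tensor product, whereas you derive it from Definition~\ref{simp} by identifying the degenerate subobject in terms of degeneracies of the codomain: this is the genuine content of your argument and is a worthwhile unpacking of the citation. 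Your jump-counting argument in fact gives the sharper additive bound (top degree $\le p+q$) rather than the multiplicative one asserted in the lemma. Two small remarks on bookkeeping: the paper's convention for ``length $k$'' (made explicit in its own proof) is that $P_i=0$ for $i>k$, which differs from yours ($P_p=0$ for $p\ge k$); and with the paper's convention the additive bound $k+l$ can exceed $kl$ when $\min(k,l)=1$, so the quantitative assertion ``length at most $kl$'' is slightly off in those degenerate cases. This is harmless, since only the boundedness conclusion is used downstream (e.g.\ in Lemma~\ref{higher simplicial tensor acyclic}), and your argument establishes boundedness cleanly.
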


\begin{proof}
 Examining the Dold--Kan functors applied to a tensor product,
 one sees that the object $(P_{\cd} \otimes_{\Delta} Q_{\cd})_n$ is equal to
 \[
  N(\operatorname{diag}(\Gamma(P_{\cd}) \otimes \Gamma(Q_{\cd})))_n =
  \bigoplus_{\varphi} P_i \otimes Q_j,
 \]
 where $\varphi$ runs over all injections $[n] \hookrightarrow [i] \times [j]$
 whose composition with the projections onto $[i]$ and $[j]$ gives surjections
 $[n] \twoheadrightarrow [i]$ and $[n] \twoheadrightarrow [j]$
 (this is derived in \cite{LawsonSimp}).
 The complexes $P_{\cd}$ and $Q_{\cd}$ are of length $k$ and $l$,
 so $P_i = 0$ and $Q_j = 0$ for all $i>k$ and $j > l$. But for $n > kl$
 there is no injection $[n] \hookrightarrow [i] \times [j]$, with $i \le k$ and $j \le l$,
 such that $[n] \twoheadrightarrow [i]$ and $[n] \twoheadrightarrow [j]$ are order-preserving
 surjections. So $(P_{\cd} \otimes_{\Delta} Q_{\cd})_n = 0$ for $n > kl$.
\end{proof}

We now verify that $\otimes_{\Delta}$ is a tensor product in the sense of Definition~\ref{tensor}.

\begin{prop}
 \label{chain complexes tensor}
 The simplicial tensor product $\otimes_{\Delta}$ is a tensor product on the idempotent complete exact category $C\mc{P}$ and restricts to a tensor product on the full subcategory $C_{\rm{b}}^{\rm{q}}\mc{P}$.
\end{prop}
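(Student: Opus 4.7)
The plan is to unpack the definition of a tensor product (Definition~\ref{tensor}) and verify each requirement, namely bi-additivity and separate exactness in each variable, together with the claim that the construction stays inside $C_{\rm{b}}^{\rm{q}}\mc{P}$.

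First I would observe that $\otimes_\Delta$ is manifestly a bifunctor, being the composition
\[
    C\mc{P} \times C\mc{P} \xrightarrow{\Gamma \times \Gamma} \mc{P}^{\Delta^{\rm{op}}} \times \mc{P}^{\Delta^{\rm{op}}} \xrightarrow{\otimes} \mc{P}^{\Delta^{\rm{op}} \times \Delta^{\rm{op}}} \xrightarrow{\operatorname{diag}^{*}} \mc{P}^{\Delta^{\rm{op}}} \xrightarrow{N} C\mc{P}.
\]
Bi-additivity is then inherited from bi-additivity of the level-wise tensor product on (bi)simplicial objects, together with additivity of $\Gamma$, $\operatorname{diag}^{*}$, and $N$ (which is immediate from the formulas and part of the Dold--Kan equivalence).

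Next I would establish the exactness of $P_\cd \otimes_\Delta -$ for each fixed $P_\cd \in C\mc{P}$ (the argument for $-\otimes_\Delta P_\cd$ is symmetric). The Dold--Kan functors $\Gamma$ and $N$ are exact (reviewed in Section~\ref{prelim}), and $\operatorname{diag}^{*}$ is exact because exactness of bisimplicial sequences in $\mc{P}$ is checked level-wise. The only point that needs a direct verification is that the functor
\[
    \Gamma(P_\cd) \otimes -\colon \mc{P}^{\Delta^{\rm{op}}} \longrightarrow \mc{P}^{\Delta^{\rm{op}} \times \Delta^{\rm{op}}}
\]
is exact; but in bidegree $(m,n)$ this functor sends $B$ to $\Gamma(P_\cd)_m \otimes B_n$, and each $\Gamma(P_\cd)_m$ is just a finite direct sum of the objects $P_i$, so exactness in the second variable of the underlying tensor product on $\mc{P}$ (Definition~\ref{tensor}) does the job level-wise. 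Composing the four exact functors yields exactness of $P_\cd \otimes_\Delta -$.

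Finally I would deal with the restriction to $C_{\rm{b}}^{\rm{q}}\mc{P}$. Lemma~\ref{tensor bounded} shows that the simplicial tensor product of two bounded complexes is bounded, while Lemma~\ref{comparison} shows that the tensor product of any complex with an acyclic one is acyclic; hence if both inputs lie in $C_{\rm{b}}^{\rm{q}}\mc{P}$ so does the output. Since $C_{\rm{b}}^{\rm{q}}\mc{P}$ is a full subcategory of $C\mc{P}$ closed under extensions, any short exact sequence in $C_{\rm{b}}^{\rm{q}}\mc{P}$ is in particular a short exact sequence in $C\mc{P}$; applying the exact functor $P_\cd \otimes_\Delta -$ (for $P_\cd \in C_{\rm{b}}^{\rm{q}}\mc{P}$) produces a short exact sequence in $C\mc{P}$ whose three terms lie in $C_{\rm{b}}^{\rm{q}}\mc{P}$ by the boundedness and acyclicity just noted. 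Idempotent completeness of $C\mc{P}$ is an instance of Lemma~\ref{iterate lemma}(1). The only subtle ingredient in the whole argument is the exactness of $N \circ \operatorname{diag}^{*}$ on bisimplicial short exact sequences, and this is essentially bookkeeping since both functors can be computed pointwise.
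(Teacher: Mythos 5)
Your proof is correct and takes essentially the same approach as the paper's: both reduce bi-additivity and exactness to the Dold--Kan functors plus a level-wise verification that the diagonal tensor product by a fixed simplicial object preserves short exact sequences, and both cite Lemmas~\ref{comparison} and~\ref{tensor bounded} to keep the output inside $C_{\rm{b}}^{\rm{q}}\mc{P}$.
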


\begin{proof}
 If $P_{\cd}$ and $Q_{\cd}$ are in $C_{\rm{b}}^{\rm{q}}\mc{P}$,
 then so is $P_{\cd} \otimes_{\Delta,n} Q_{\cd}$, by Lemmas \ref{comparison} and~\ref{tensor bounded}.
 So it remains to show that $- \otimes_{\Delta} -$ is bi-additive,
 and that the functors $P_{\cd} \otimes_{\Delta} - $ and $- \otimes_{\Delta} P_{\cd}$ are exact when $P_{\cd}$
 is in $C\mc{P}$.\\
 The functors $N$ and $\Gamma$ are both additive and exact,
 so we only need to inspect $\operatorname{diag}(- \otimes -)$.
 This is easily seen to be bi-additive, as $- \otimes -$ is bi-additive.
 Therefore $- \otimes_{\Delta} -$ is bi-additive as well.\\
 Let $B$ be a simplicial object in $\mc{P}$.
 For a short exact sequence of simplicial objects $0 \rightarrow A' \rightarrow A \rightarrow A'' \rightarrow 0$,
 the sequence
 \[
  0 \rightarrow \operatorname{diag}(B \otimes A')_n \rightarrow \operatorname{diag}(B \otimes A)_n \rightarrow \operatorname{diag}(B \otimes A'')_n \rightarrow 0
 \]
 is equal to
 \[
  0 \rightarrow B_n \otimes A'_n \rightarrow B_n \otimes A_n \rightarrow B_n \otimes A''_n \rightarrow 0,
 \]
 which is short exact since each
 $0 \rightarrow A'_n \rightarrow A_n \rightarrow A''_n \rightarrow 0$
 is short exact and $B_n \otimes -$ is exact.
 So the sequence
 \[
  0 \rightarrow \operatorname{diag}(B \otimes A') \rightarrow \operatorname{diag}(B \otimes A) \rightarrow \operatorname{diag}(B \otimes A'') \rightarrow 0,
 \]
 is short exact in every degree, for any simplicial object $B$ in $\mc{P}$.
 Therefore the functor
 $\operatorname{diag}(\Gamma(P_{\cd}) \otimes -) \colon \mc{P}^{\Delta^{\rm{op}}} \rightarrow \mc{P}^{\Delta^{\rm{op}}}$
 is exact. The same is true for $\operatorname{diag}(- \otimes \Gamma(P_{\cd}))$.
 It follows that $P_{\cd} \otimes_{\Delta} - $ and $- \otimes_{\Delta} P_{\cd}$ are exact functors.
\end{proof}

We are now ready to iteratively define simplicial tensor products on categories of multicomplexes.

\begin{defn}
\label{higher simplicial tensors}
 We define \emph{simplicial tensor products }
 \[
  \otimes_{\Delta,n} \colon C^n\mc{P} \times C^n\mc{P}
 \rightarrow C^n\mc{P}
 \]
 for all $n \ge 0$ recursively:

 \begin{enumerate}
  \item $\otimes_{\Delta,0} \colon \mc{P} \times \mc{P}
  \rightarrow \mc{P}$
  is the usual tensor product $\otimes$, and
  \item by regarding objects $P_{\cd}$ and $Q_{\cd}$ of $C^{n+1}\mc{P}$ as chain complexes
  in the idempotent complete exact category $C^n\mc{P}$ with the tensor product
  $\otimes_{\Delta,n}$, we define
  $P_{\cd} \otimes_{\Delta,n+1} Q_{\cd} \defeq N(\operatorname{diag}(\Gamma(P_{\cd}) \otimes_{\Delta,n} \Gamma(Q_{\cd})))$.
 \end{enumerate}
\end{defn}

 The following iteration of Proposition~\ref{chain complexes tensor} is now straightforward.
The case $n=0$ is an assumption of this section, and we iterate using
$(C_{\rm{b}}^{\rm{q}})^{n+1}\mc{P} = C_{\rm{b}}^{\rm{q}}((C_{\rm{b}}^{\rm{q}})^n\mc{P})$.

\begin{cor}
\label{iterated tesnor}
 For all $n \ge 0$, the simplicial tensor product $\otimes_{\Delta,n}$ is a tensor product
 in the sense of Definition~\ref{tensor} on $C^n\mc{P}$ and on
 $(C_{\rm{b}}^{\rm{q}})^n\mc{P}$.\qed
\end{cor}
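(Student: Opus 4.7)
The plan is a clean induction on $n$, with Proposition~\ref{chain complexes tensor} doing all the real work at each step. The base case $n = 0$ is immediate from the standing hypothesis that $(\mc{P},\otimes)$ is an idempotent complete exact category with a tensor product, and the observation that $C^0\mc{P} = \mc{P} = (C_{\rm{b}}^{\rm{q}})^0\mc{P}$.

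For the inductive step, I would assume that $\otimes_{\Delta,n}$ is a tensor product, in the sense of Definition~\ref{tensor}, on both $C^n\mc{P}$ and $(C_{\rm{b}}^{\rm{q}})^n\mc{P}$. The crucial observation is that Lemma~\ref{iterate lemma} supplies exactly the idempotent completeness needed to feed these categories back into Proposition~\ref{chain complexes tensor}. That is, both $C^n\mc{P}$ and $(C_{\rm{b}}^{\rm{q}})^n\mc{P}$ are idempotent complete exact categories equipped with a tensor product (namely $\otimes_{\Delta,n}$), so Proposition~\ref{chain complexes tensor} applies verbatim with $(\mc{P},\otimes)$ replaced by each of them in turn.

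Applying the proposition with $(\mc{P},\otimes)$ replaced by $(C^n\mc{P},\otimes_{\Delta,n})$ yields that
\[
\otimes_{\Delta,n+1} \;=\; N(\operatorname{diag}(\Gamma(-) \otimes_{\Delta,n} \Gamma(-)))
\]
is a tensor product on $C(C^n\mc{P}) = C^{n+1}\mc{P}$. Applying it with $(\mc{P},\otimes)$ replaced by $((C_{\rm{b}}^{\rm{q}})^n\mc{P},\otimes_{\Delta,n})$ yields that $\otimes_{\Delta,n+1}$ restricts to a tensor product on the full subcategory $C_{\rm{b}}^{\rm{q}}((C_{\rm{b}}^{\rm{q}})^n\mc{P}) = (C_{\rm{b}}^{\rm{q}})^{n+1}\mc{P}$, using the recursive definition from Definition~\ref{higher simplicial tensors} together with Definition of $(C_{\rm{b}}^{\rm{q}})^{n+1}\mc{P}$.

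There is no genuine obstacle here: the statement was engineered to be a formal consequence of Proposition~\ref{chain complexes tensor} together with Lemma~\ref{iterate lemma}. The only point to watch is that the induction hypothesis must carry the full package (idempotent completeness plus bi-additivity plus exactness in each variable) so that Proposition~\ref{chain complexes tensor} can be reapplied at level $n+1$; Lemma~\ref{iterate lemma} is what guarantees the idempotent completeness that is not already part of the definition of a tensor product.
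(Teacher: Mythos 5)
Your proof is correct and takes essentially the same route as the paper, which simply remarks that the result follows by iterating Proposition~\ref{chain complexes tensor} using the identification $(C_{\rm{b}}^{\rm{q}})^{n+1}\mc{P} = C_{\rm{b}}^{\rm{q}}((C_{\rm{b}}^{\rm{q}})^n\mc{P})$. You make explicit the one hypothesis the paper leaves implicit --- that Lemma~\ref{iterate lemma} supplies the idempotent completeness needed to reapply Proposition~\ref{chain complexes tensor} at each stage --- which is a legitimate and welcome unpacking rather than a deviation.
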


 In fact we can say a little more than this.
The following lemma is crucial to the proof of the main result of this section.

\begin{lem}
\label{higher simplicial tensor acyclic}
 Let $P_{\cd}$ be an object of
 $C_{\rm{b}}((C_{\rm{b}}^{\rm{q}})^n\mc{P})$, and let
 $Q_{\cd}$ be an object of $(C_{\rm{b}}^{\rm{q}})^{n+1}\mc{P}$.
 Then $P_{\cd} \otimes_{\Delta, n+1} Q_{\cd}$ is an object of $(C_{\rm{b}}^{\rm{q}})^{n+1}\mc{P}$.
\end{lem}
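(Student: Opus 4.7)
The plan is to bootstrap the entire subsection one level up: by Lemma~\ref{iterate lemma}, the category $(C_{\rm{b}}^{\rm{q}})^n\mc{P}$ is idempotent complete, and by Corollary~\ref{iterated tesnor}, the simplicial tensor product $\otimes_{\Delta,n}$ is a tensor product on $(C_{\rm{b}}^{\rm{q}})^n\mc{P}$ in the sense of Definition~\ref{tensor}. Consequently every result in the preceding portion of the subsection applies verbatim with $\mc{P}$ replaced by $(C_{\rm{b}}^{\rm{q}})^n\mc{P}$ and $\otimes$ replaced by $\otimes_{\Delta,n}$, and the definition of $P_{\cd} \otimes_{\Delta, n+1} Q_{\cd}$ is precisely the instance of Definition~\ref{simplicial tensor product} in that enriched setting.

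With this reindexing, $P_{\cd}$ is a bounded chain complex in the idempotent complete exact category $(C_{\rm{b}}^{\rm{q}})^n\mc{P}$, while $Q_{\cd}$ is a bounded \emph{acyclic} chain complex in the same category. I would then invoke the one-level-up version of Lemma~\ref{tensor bounded} to conclude that $P_{\cd} \otimes_{\Delta, n+1} Q_{\cd}$ has only finitely many nonzero terms, and the one-level-up version of Lemma~\ref{comparison}, with $Q_{\cd}$ as the acyclic factor, to conclude that $P_{\cd} \otimes_{\Delta, n+1} Q_{\cd}$ is acyclic as a chain complex in $(C_{\rm{b}}^{\rm{q}})^n\mc{P}$. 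Put together, these give an object of $C_{\rm{b}}^{\rm{q}}\left((C_{\rm{b}}^{\rm{q}})^n\mc{P}\right) = (C_{\rm{b}}^{\rm{q}})^{n+1}\mc{P}$.

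The one point still to verify is that each individual term of $P_{\cd} \otimes_{\Delta, n+1} Q_{\cd}$ really lies in $(C_{\rm{b}}^{\rm{q}})^n\mc{P}$, rather than in some larger ambient additive category. This is immediate from the explicit description used in the proof of Lemma~\ref{tensor bounded}: each term is a finite direct sum of products $P_i \otimes_{\Delta, n} Q_j$, possibly cut down by the Moore complex functor $N$ to a direct summand of such a sum. By Corollary~\ref{iterated tesnor} each $P_i \otimes_{\Delta, n} Q_j$ lies in $(C_{\rm{b}}^{\rm{q}})^n\mc{P}$, and idempotent completeness (Lemma~\ref{iterate lemma}) closes this property under the direct summand extracted by $N$.

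The step I would expect to cause the most friction is organizational rather than mathematical, namely checking carefully that the earlier lemmas of this subsection — which are stated for a fixed idempotent complete exact category equipped with a tensor product — transfer cleanly to the new base category $(C_{\rm{b}}^{\rm{q}})^n\mc{P}$. Once one is satisfied that Lemma~\ref{iterate lemma} and Corollary~\ref{iterated tesnor} provide exactly the structural input those earlier lemmas require, the proof of the present lemma reduces to a direct citation of the one-level-up analogues of Lemmas~\ref{tensor bounded} and~\ref{comparison}.
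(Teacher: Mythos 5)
Your proof is correct and follows essentially the same route as the paper's: the paper's proof is a direct citation of Lemmas~\ref{comparison} and~\ref{tensor bounded} applied to the tensor product $\otimes_{\Delta,n}$ on $(C_{\rm{b}}^{\rm{q}})^n\mc{P}$, using that $Q_{\cd}$ is the acyclic factor. Your additional remarks (that Lemma~\ref{iterate lemma} and Corollary~\ref{iterated tesnor} supply the idempotent-completeness and tensor-product hypotheses needed to reindex, and that each term stays in $(C_{\rm{b}}^{\rm{q}})^n\mc{P}$) just make explicit what the paper leaves implicit.
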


\begin{proof}
 Noting that $P_{\cd}$ and $Q_{\cd}$ both have their objects in $(C_{\rm{b}}^{\rm{q}})^n\mc{P}$, and that $Q_{\cd}$
 is an acyclic complex of objects in that category,
 the lemma follows immediately from Lemmas \ref{comparison} and \ref{tensor bounded} applied to the tensor product
 $\otimes_{\Delta,n}$ on the category
 $(C_{\rm{b}}^{\rm{q}})^n\mc{P}$.
\end{proof}

We can extend the simplicial tensor products to categories of binary complexes in the same way
that we did for exterior powers in
section \ref{Operations on binary multicomplexes}. The simplicial tensor product
of a pair of binary complexes $(P_{\cd}, d_P, \tilde{d}_P)$ and
$(Q_{\cd}, d_Q, \tilde{d}_Q)$ is obtained by considering the pair of chain complexes
$(P_{\cd}, d_C) \otimes_{\Delta} (Q_{\cd}, d_Q)$ and
$(P_{\cd}, \tilde{d}_P) \otimes_{\Delta} (Q_{\cd}, \tilde{d}_Q)$
as a binary complex (it is straightforward to prove that they have the same underlying graded object,
in the same manner as Lemma~\ref{underlying graded}). The analogue of Corollary~\ref{underlying equal}
then follows, and we define the simplicial tensor product of binary multicomplexes just as we did
for a functor of one variable in Definition \ref{binary functor}.

\subsection{Vanishing of products}
In this subsection we prove that the class of any simplicial tensor product vanishes in the corresponding $K$-group. Our proof resembles Grayson's procedure \cite[p.~103]{Gray92} of verifying that the second Euler characteristic of a doubly acyclic bicomplex vanishes.

Let $n >0$, and let $P_{\cd}$ and $Q_{\cd}$
be $n$-dimensional bounded acyclic binary complexes of objects of $\mc{P}$.
That is, $P_{\cd}$ and $Q_{\cd}$ are objects of $(B^{\rm{q}}_{\rm{b}})^n\mc{P}$.
Then the simplicial tensor product
$P_{\cd} \otimes_{\Delta,n} Q_{\cd}$ is in $(B^{\rm{q}}_{\rm{b}})^n\mc{P}$ as well
by Corollary~\ref{iterated tesnor}. Since the objects of $(B^{\rm{q}}_{\rm{b}})^n\mc{P}$
are the generators of $K_n(\mc{P})$, one would like to use $\otimes_{\Delta,n}$ to induce a product
$K_n(\mc{P}) \times K_n(\mc{P}) \rightarrow K_n(\mc{P})$.
On first inspection this appears not to work, because
the product $P_{\cd} \otimes_{\Delta,n} Q_{\cd}$ is not diagonal if
only one of $P_{\cd}$ or $Q_{\cd}$ is diagonal. This is not a problem in the end though, since the
whole product vanishes on $K_n(\mc{P})$.

\begin{prop}
\label{tensor product vanishes}
 Let $n >0$. For any pair of $n$-dimensional bounded acyclic multicomplexes
 $P_{\cd}$, $Q_{\cd}$ in
 $(B^{\rm{q}}_{\rm{b}})^n\mc{P}$, the class
 $[ P_{\cd} \otimes_{\Delta, n} Q_{\cd} ]$ vanishes in $K_n(\mc{P})$.
\end{prop}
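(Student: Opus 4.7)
The plan is to prove the proposition by induction on $n$, using a filtration argument modelled on Grayson's proof (cf.\ [Gray92, p.~103]) that the second Euler characteristic of a doubly acyclic bicomplex vanishes.

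View $P$ and $Q$ as bounded acyclic binary chain complexes of objects of $\mc{P}' \defeq (B^{\mathrm{q}}_{\mathrm{b}})^{n-1}\mc{P}$ (with $\mc{P}' = \mc{P}$ when $n=1$) in the outermost direction, with constituents $P_k$, $Q_k$. Letting $P^{(k)}$ denote the brutal truncation of $P$ to outer degrees $\le k$, the exactness of $-\otimes_{\Delta,n}Q$ from Corollary~\ref{iterated tesnor} furnishes a filtration $F_k \defeq P^{(k)}\otimes_{\Delta,n}Q$ of $P\otimes_{\Delta,n}Q$, with successive quotients $F_k/F_{k-1} \cong P_k[k]\otimes_{\Delta,n}Q$, where $P_k[k]$ denotes $P_k$ placed in outer degree $k$. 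Each quotient is chain-homotopy equivalent, via Eilenberg--Zilber, to the $k$-fold outer shift of $P_k\otimes_{\Delta,n-1}Q$, which is acyclic by Lemma~\ref{higher simplicial tensor acyclic}; hence the quotients (and inductively the $F_k$) all lie in $(B^{\mathrm{q}}_{\mathrm{b}})^n\mc{P}$, and additivity in $K_n(\mc{P})$ yields
\[
[P\otimes_{\Delta,n}Q] \;=\; \sum_k [P_k[k]\otimes_{\Delta,n}Q].
\]

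An analogue of the shifting lemma (Lemma~\ref{shifting}) adapted to the simplicial tensor product should then identify $[P_k[k]\otimes_{\Delta,n}Q]$ with $(-1)^k$ times a class whose vanishing is accessible: for $n=1$ the sum collapses to $\chi(P)\cdot[Q]$ in $K_1(\mc{P})$ under the $K_0$-action on $K_1$ induced by the ordinary tensor product, vanishing because $P$ is acyclic and so $\chi(P)=0$ in $K_0(\mc{P})$; for $n>1$ a parallel filtration of $Q$ combined with the inductive hypothesis reduces each summand to a class of the form $[P_k\otimes_{\Delta,n-1}Q_l]$ in $K_{n-1}(\mc{P})$, which vanishes by induction, and whose vanishing we propagate through the outer shift into $K_n(\mc{P})$.

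The main obstacle is promoting the Eilenberg--Zilber chain-homotopy equivalence $P_k[k]\otimes_{\Delta,n}Q \simeq (P_k\otimes_{\Delta,n-1}Q)[k]$ to an actual equality of classes in $K_n(\mc{P})$. This requires exhibiting explicit short exact sequences of bounded acyclic binary multicomplexes whose compensating terms are either diagonal (and so vanish in $K_n$) or cancel telescopically, in direct analogy with the iterated cone-of-identity truncation used to prove Lemma~\ref{shifting}; the combinatorial complexity of the shuffle decomposition inherent to the simplicial tensor product makes this the technical heart of the argument.
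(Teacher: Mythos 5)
Your filtration of $P$ in the outer direction, and the resulting reduction to
$[P\otimes_{\Delta,n}Q]=\sum_k[P_k[k]\otimes_{\Delta,n}Q]$, is exactly the paper's first step, and you have correctly identified where the remaining work lies. But two things you propose doing afterwards are off the mark, and one of them is a genuine gap.

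First, Eilenberg--Zilber is not the right tool for handling the shift, and in fact it is not used anywhere in the paper's proof of this proposition. The identification $[P_k[k]\otimes_{\Delta,n}Q]=(-1)^k[P_k[0]\otimes_{\Delta,n}Q]$ is proved (as a separate Lemma~\ref{supplementary lemma}) by an elementary short exact sequence of binary complexes
\[
0\longrightarrow P_j[j-1]\longrightarrow (P_j=P_j)\longrightarrow P_j[j]\longrightarrow 0,
\]
where $(P_j=P_j)$ is the \emph{diagonal} two-term complex with both differentials the identity. Tensoring with $Q$ preserves exactness; to see $[(P_j=P_j)\otimes_{\Delta,n}Q]=0$ one filters $Q$ the same way and observes that the simplicial tensor product of two diagonal binary complexes is again diagonal, hence kills in $K_n$. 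The second, non-shifting identification $P_j[0]\otimes_{\Delta,n}Q\cong P_j\otimes_{\Delta,n-1}Q$ is likewise not an EZ computation: it is a strict isomorphism, coming from the observation that $\Gamma(P_j[0])$ is a constant simplicial object, so $\operatorname{diag}(\Gamma(P_j[0])\otimes\Gamma(-))\cong P_j\otimes_{\Delta,n-1}\Gamma(-)$ is additive in the second variable and commutes with $N\Gamma\simeq\operatorname{id}$. Chain-homotopy equivalence would not suffice because $[\,\cdot\,]$ is a Grothendieck class, and you correctly sensed this; the fix is that you never need anything weaker than exact sequences and isomorphisms.

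Second, your final step for $n>1$ does not work as stated. You propose to show $[P_k\otimes_{\Delta,n-1}Q_l]=0$ in $K_{n-1}(\mc{P})$ by induction and then ``propagate through the outer shift into $K_n(\mc{P})$.'' There is no homomorphism $K_{n-1}(\mc{P})\to K_n(\mc{P})$ along which a vanishing in $K_{n-1}$ would imply a vanishing of the corresponding shifted class in $K_n$; shifting acts within $K_n$, not between degrees. The paper in fact uses no induction on $n$ at all. Once one has $[P\otimes_{\Delta,n}Q]=\sum_j(-1)^j[P_j\otimes_{\Delta,n-1}Q]$, the acyclicity of $P$ (as a complex of objects of $(B^{\rm q}_{\rm b})^{n-1}\mc{P}$) and the exactness of $-\otimes_{\Delta,n-1}Q$ give an exact sequence
\[
0\to P_m\otimes_{\Delta,n-1}Q\to\cdots\to P_0\otimes_{\Delta,n-1}Q\to 0
\]
in $(B^{\rm q}_{\rm b})^n\mc{P}$, and the alternating sum of its classes is zero already in $K_0\bigl((B^{\rm q}_{\rm b})^n\mc{P}\bigr)$. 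Your $n=1$ argument (Euler characteristic) is a restatement of this in the special case, so your instinct there was correct — but it generalizes uniformly to all $n$ via this exact-sequence relation, and the proposed induction is both unnecessary and, as written, ill-founded.

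Thus the overall shape of your proof is right, but the two technical ingredients you flagged as the ``heart'' are indeed missing, and the one new idea you introduce (induction on $n$ with a shift-propagation) would not go through.
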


\begin{proof}
 First we filter $P_{\cd}$ by degree. Regard $P_{\cd}$
 as an acyclic binary complex of objects of $(B^{\rm{q}}_{\rm{b}})^{n-1}\mc{P}$.
 For $i \ge 0$, let $P|_{[0,i]}$ be the binary complex
 obtained by `restricting' $P_{\cd}$ to be supported on $[0,i]$. That is,
 $(P|_{[0,i]})_j$ is equal to $P_j$ if $0 \le j \le i$,
 and $(P|_{[0,i]})_j = 0$ otherwise. The differentials on
 $P|_{[0,i]}$ are inherited from $P_{\cd}$. We write $P_j[0]$ for
 $P_j$ considered as a binary complex concentrated in degree $0$. Then
 $P_j[j]$, which denotes $P_j$ considered as a binary complex concentrated in degree $j$,
 is the quotient of the inclusion $P|_{[0,j-1]} \hookrightarrow P|_{[0,j]}$ (if $j \ge 1$).
 If $P_{\cd}$ is supported on $[0,n]$, so that $P_j=0$ for $j>n$, we therefore have an
 $n$-stage filtration
 \[
  P_0[0] = P|_{[0,0]} \hookrightarrow P|_{[0,1]} \hookrightarrow \cdots \hookrightarrow P|_{[0,n-1]} \hookrightarrow
  P|_{[0,n]} = P_{\cd}
 \]
 whose successive quotients determine short exact sequences
 \[
  0 \rightarrow P|_{[0,j-1]} \rightarrow P|_{[0,j]} \rightarrow P_j[j] \rightarrow 0.
 \]
 We take the simplicial tensor product with $Q_{\cd}$
 of this whole filtration, obtaining sequences
 \begin{equation}
  \label{eqn:Qses}
    0 \rightarrow P|_{[0,j-1]} \otimes_{\Delta, n} Q_{\cd} \rightarrow P|_{[0,j]} \otimes_{\Delta, n} Q_{\cd}
  \rightarrow P_j[j] \otimes_{\Delta, n} Q_{\cd} \rightarrow 0
  \end{equation}
 for $j = 1, \dots , n$, which are short exact by Corollary \ref{iterated tesnor}.\\
 By Lemma \ref{higher simplicial tensor acyclic}, all of the objects are in the right category,
 so each of the short exact sequences of \eqref{eqn:Qses} yields an equation
 \[
  [ P|_{[0,j]} \otimes_{\Delta, n} Q_{\cd}] = [P|_{[0,j-1]} \otimes_{\Delta, n} Q_{\cd}]
  + [P_j[j] \otimes_{\Delta, n} Q_{\cd}]
 \]
 in $K_n(\mc{P})$. Putting these together gives
 \[
  [P_{\cd} \otimes_{\Delta, n} Q_{\cd}] = \sum_{j=0}^n [P_j[j] \otimes_{\Delta, n} Q_{\cd}].
 \]
 To proceed we need to assume a small lemma, for which the second type of relation in $K_n(\mc{P})$
 (diagonal binary multicomplexes vanish) is crucial.
 \begin{lem}
 \label{supplementary lemma}
  The following equality holds in $K_n(\mc{P})$:
  \[
   [P_j[j] \otimes_{\Delta, n} Q_{\cd}] = (-1)^j[P_j[0] \otimes_{\Delta, n} Q_{\cd}].
  \]
 \end{lem}
  Continuing with the main proof, our equation now reads
 \[
  [P_{\cd} \otimes_{\Delta, n} Q_{\cd}] = \sum_{j=0}^n (-1)^j[P_j[0] \otimes_{\Delta, n} Q_{\cd}].
 \]
 By inspection we see that $\Gamma(P_j[0])$
 is the constant simplicial object which has $P_j$
 in each degree. The functor
 \[
  \operatorname{diag}(\Gamma(P_j[0]) \otimes_{\Delta, n-1} -) \colon
 \left((C_{\rm{b}})^{n-1}\mc{A}\right)^{\Delta^{\rm{op}}} \rightarrow \left((C_{\rm{b}})^{n-1}\mc{A}\right)^{\Delta^{\rm{op}}}
 \]
 is therefore
 isomorphic to the functor
 \[
  P_j \otimes_{\Delta, n-1} - \colon
 \left((C_{\rm{b}})^{n-1}\mc{A}\right)^{\Delta^{\rm{op}}} \rightarrow \left((C_{\rm{b}})^{n-1}\mc{A}\right)^{\Delta^{\rm{op}}},
 \]
 since they both have the same effect of `tensoring everywhere by $P_j$'. This functor is additive,
 so we have an isomorphism of functors
 \[
  N( P_j \otimes_{\Delta, n-1} \Gamma(-)) \cong P_j \otimes_{\Delta, n-1} -.
 \]
 Hence
 \[
  P_j[0] \otimes_{\Delta, n} Q_{\cd} =
  N\operatorname{diag}(\Gamma(P_j[0]) \otimes_{\Delta, n-1} \Gamma(Q_{\cd})) \cong
  P_j \otimes_{\Delta, n-1} Q_{\cd},
 \]
 so we have
 \[
  [P_{\cd} \otimes_{\Delta, n} Q_{\cd}] = \sum_{j=0}^n (-1)^j[P_j \otimes_{\Delta, n-1} Q_{\cd}].
 \]
 There is an exact sequence
 \[
  0 \rightarrow P_n \rightarrow P_{n-1} \rightarrow \cdots \rightarrow P_1 \rightarrow P_0 \rightarrow 0,
 \]
 since $P_{\cd}$ is acyclic. The objects of $Q_{\cd}$ are in $(B^{\rm{q}}_{\rm{b}})^{n-1}\mc{P}$, so
 $- \otimes_{\Delta, n-1} Q_{\cd}$ is an exact functor by Lemma~\ref{acyclic usual tensor} $(1)$, and
 so the following sequence is exact.
 \begin{multline*}
  0 \rightarrow P_n \otimes_{\Delta, n-1} Q_{\cd} \rightarrow P_{n-1} \otimes_{\Delta, n-1} Q_{\cd}
  \rightarrow \cdots \\
  \cdots \rightarrow
  P_1 \otimes_{\Delta, n-1} Q_{\cd} \rightarrow P_0 \otimes_{\Delta, n-1} Q_{\cd} \rightarrow 0
 \end{multline*}
 Exact sequences translate into alternating sums in the Grothendieck group,
 so this exact sequence gives exactly the identity
 \[
  \sum_{j=0}^n (-1)^j[P_j \otimes_{\Delta, n-1} Q_{\cd}] = 0
 \]
 in $K_0\left((B_{\rm{b}}^{\rm{q}})^n\mc{P}\right)$, and hence the same relation holds in
 $K_n(\mc{P})$. Therefore $[P_{\cd} \otimes_{\Delta, n} Q_{\cd}] = 0$, as required.
\end{proof}

It remains to prove Lemma~\ref{supplementary lemma}.

\begin{proof}[Proof of Lemma~\ref{supplementary lemma}]
 Consider the following diagram as a short exact sequence of binary complexes
 concentrated in degrees $j$ and $j-1$:
 \[
  \xymatrix{
  0 \binr{}{} \ar@{ >->}[d] & P_j \ar@{ >->}[d] \\
  P_j \binr{1}{1} \ar@{->>}[d] & P_j \ar@{->>}[d] \\
  P_j \binr{}{} & 0.
  }
 \]
 We will use this diagram to show that
 $[P_j[j] \otimes_{\Delta,n} Q_{\cd}] = -[P_j[j-1] \otimes_{\Delta,n} Q_{\cd}]$.
 The argument can be iterated $j-1$ times to yield
 $[P_j[j] \otimes_{\Delta,n} Q_{\cd}] = (-1)^j[P_j[0] \otimes_{\Delta,n} Q_{\cd}]$
 in $K_n(\mc{P})$, as required.
 For lack of a better notation, we will denote the middle row of the diagram by $(P_j = P_j)$.
 Then the diagram represents a short exact sequence of binary complexes
 \[
  0 \rightarrow P_j[j-1] \rightarrow (P_j = P_j) \rightarrow P_j[j] \rightarrow 0,
 \]
 which upon tensoring with $Q_{\cd}$ becomes the short exact sequence
 \[
  0 \rightarrow P_j[j-1] \otimes_{\Delta,n} Q_{\cd} \rightarrow
  (P_j = P_j) \otimes_{\Delta,n} Q_{\cd} \rightarrow
  P_j[j] \otimes_{\Delta,n} Q_{\cd} \rightarrow 0,
 \]
 by Lemma~\ref{higher simplicial tensor acyclic}.
 Since $Q_{\cd}$ is acyclic and has objects in $(B_{\rm{b}}^{\rm{q}})^{n-1}\mc{P}$, each of the terms of this short exact sequence
 is an object of $(B_{\rm{b}}^{\rm{q}})^n\mc{P}$ by Lemma~\ref{higher simplicial tensor acyclic}, so we have a relation
 \[
  [(P_j = P_j) \otimes_{\Delta,n} Q_{\cd}] = [P_j[j-1] \otimes_{\Delta,n} Q_{\cd}]
 + [P_j[j] \otimes_{\Delta,n} Q_{\cd}],
 \]
 in $K_0\left((B_{\rm{b}}^{\rm{q}})^n\mc{P}\right)$, and hence in
 $K_n(\mc{P})$.
 We claim that
 \[
  [(P_j = P_j) \otimes_{\Delta,n} Q_{\cd}] = 0
 \]
 in $K_n(\mc{P})$, so that
 $[P_j[j] \otimes_{\Delta,n} Q_{\cd}] = -[P_j[j-1] \otimes_{\Delta,n} Q_{\cd}]$.
 We can filter $Q_{\cd}$
 in the same manner that we have filtered $P_{\cd}$ in the main proof above,
 \[
  Q_0[0] = Q|_{[0,0]} \hookrightarrow Q|_{[0,1]} \hookrightarrow \cdots \hookrightarrow Q|_{[0,n-1]} \hookrightarrow
  Q|_{[0,n]} = Q_{\cd}
 \]
 giving short exact sequences
 \[
  0 \rightarrow Q|_{[0,i-1]} \rightarrow Q|_{[0,i]} \rightarrow Q_i[i] \rightarrow 0.
 \]
 Upon tensoring with $(P_j = P_j)$, we have short exact sequences:
 \begin{multline*}
  0 \longrightarrow (P_j = P_j) \otimes_{\Delta, n} Q|_{[0,i-1]} \longrightarrow
  (P_j = P_j) \otimes_{\Delta, n} Q|_{[0,i]} \\ \longrightarrow
  (P_j = P_j) \otimes_{\Delta, n} Q_i[i] \longrightarrow 0,
 \end{multline*}
 (by Lemma \ref{higher simplicial tensor acyclic}).
 Furthermore, since $(P_j = P_j)$ is an acyclic binary complex of objects of
 $(B_{\rm{b}}^{\rm{q}})^{n-1}\mc{P}$,
 each of the terms of these short exact sequences is an object of
 $(B_{\rm{b}}^{\rm{q}})^n\mc{P}$, by Lemma \ref{higher simplicial tensor acyclic}.
 We therefore have the equation
 \[
  [(P_j = P_j) \otimes_{\Delta,n} Q_{\cd}] =
  \sum_i [(P_j = P_j) \otimes_{\Delta, n} Q_i[i]]
 \]
 in $K_0\left((B_{\rm{b}}^{\rm{q}})^n\mc{P}\right)$, and hence in
 $K_n(\mc{P})$. But $(P_j = P_j)$ is a diagonal binary complex, as is each $Q_i[i]$
 (trivially). The simplicial tensor product of a pair of diagonal complexes is again diagonal,
 so each of the acyclic binary complexes $(P_j = P_j) \otimes_{\Delta, n} Q_i[i]$ is diagonal
 and hence vanishes in $K_n(\mc{P})$. Therefore $[(P_j = P_j) \otimes_{\Delta,n} Q_{\cd}] = 0$,
 so the desired relation holds.
\end{proof}

This finally completes the proof of Proposition~\ref{tensor product vanishes}. Having taken the trouble to set
up an alternative product of bounded acyclic binary multicomplexes,
one that is compatible with the exterior powers, we've now shown that (like the usual tensor product)
it is always zero!
It was not all for naught though:
at least we know now that the induced operation
$\otimes_{\Delta,n} \colon K_n(\mc{P}) \times K_n(\mc{P}) \rightarrow K_n(\mc{P})$
is well-defined. Furthermore, the vanishing of this product proves that the exterior power operations
induce homomorphisms on $K_n(R)$ (and more generally, on the higher $K$-groups of schemes). This is shown in the next section.

\section{Exterior power operations on \texorpdfstring{$K$}{K}-groups of schemes}
\label{proof of main theorem}

The goal of this section is to extend the endofunctor
$\Lambda_n^r$ defined in section~\ref{Operations on binary multicomplexes}
to bounded acyclic multicomplexes of locally free modules of finite rank on a scheme $X$,
and to prove that it induces a well-defined operation~$\lambda^r$ on the higher $K$-group $K_n(X)$.
We will see that, for $n>0$, this operation~$\lambda^r$ is not just a map but in fact a homomorphism.

\medskip

Let $X$ be a quasi-compact scheme, and let $\proj(X)$ be the category
of locally free $\mc{O}_X$-modules of finite rank.
Then $\proj(X)$ is an exact category in the usual sense.
It is idempotent complete but not split exact in general.
We write $K_n(X)$ for the $K$-group $K_n(\proj(X))$.\\
As in section \ref{Operations on acyclic complexes}, we inductively define an endofunctor
$\Lambda^r_n$ on $C^n\proj(X)$ for $r \ge 1$ and $n \ge 0$ as follows: the functor
$\Lambda^r_0$ is the usual $r^{\rm{th}}$ exterior power functor on $C^0\proj(X) = \proj(X)$,
and $\Lambda_n^r$ is defined as $N\Lambda_{n-1}^r\Gamma$, with $N$ and $\Gamma$ as introduced
in section \ref{prelim}.

\begin{prop}
\label{schemes restrict}
 For all $r,n>0$, the functor $\Lambda_n^r$ restricts to an endofunctor on the subcategory
 $(C^{\rm{q}}_{\rm{b}})^n\proj(X)$ of $C^n\proj(X)$.
\end{prop}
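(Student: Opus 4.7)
The plan is to reduce to the affine case already handled by Corollary~\ref{operations restrict} and then promote acyclicity from an affine cover of $X$ to all of $X$.

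First I would verify that for any open immersion $j\colon U \hookrightarrow X$ there is a natural isomorphism $\Lambda^r_n(P_{\cd})|_U \cong \Lambda^r_n(P_{\cd}|_U)$. The ordinary exterior power commutes with pullback of locally free sheaves, and the Dold--Kan constructions $\Gamma$ (Definition~\ref{simp}) and $N$ (Definition~\ref{moore complex}) are built from finite direct sums, identity morphisms, and direct summands thereof, all of which commute with $j^*$; induction on $n$ then gives the compatibility of $\Lambda^r_n$ with restriction. The objects of $\Lambda^r_n(P_{\cd})$ still lie in $\proj(X)$, since $\Lambda^r$ preserves local freeness and finite rank, $\Gamma$ takes finite direct sums, and $N$ extracts direct summands. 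Boundedness of $\Lambda^r_n(P_{\cd})$ then follows exactly as in the proof of Corollary~\ref{operations restrict}, whose boundedness half relied only on the finite-degree bound in Lemma~\ref{length} and not on split exactness.

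The remaining point is acyclicity, and here I would use that, by the footnote in section~\ref{Binary multicomplexes and algebraic $K$-theory} together with idempotent completeness of $\proj(X)$, acyclicity in $\proj(X)$ coincides with exactness as a complex of $\mc{O}_X$-modules; inductively, the same characterisation governs acyclicity of multicomplexes in each of the $n$ directions. Since exactness of complexes of sheaves may be checked on any affine open cover, I would choose $X = \bigcup_i U_i$ with $U_i = \operatorname{Spec}(R_i)$. As $\proj(R_i)$ is split exact as well as idempotent complete, Corollary~\ref{operations restrict} applies and yields $\Lambda^r_n(P_{\cd}|_{U_i}) \in (C^{\rm{q}}_{\rm{b}})^n \proj(R_i)$, in particular acyclic on $U_i$. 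Combining this with the restriction compatibility $\Lambda^r_n(P_{\cd})|_{U_i} \cong \Lambda^r_n(P_{\cd}|_{U_i})$, one gets that $\Lambda^r_n(P_{\cd})$ is acyclic on each $U_i$, hence acyclic on $X$.

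The main obstacle is really bookkeeping: confirming that the inductive definition of $\Lambda^r_n$ commutes with open restriction in every one of the $n$ directions means tracking through the combinatorics of $\Gamma$ and $N$ at each stage, and checking that acyclicity being local as claimed is compatible with the inductive definition of $(C^{\rm{q}}_{\rm{b}})^n \proj(X)$. Conceptually nothing beyond the affine result is needed, but this verification is where the bulk of the attention would go.
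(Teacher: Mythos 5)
Your proposal is correct and follows essentially the same route as the paper: establish that $\Lambda^r_n$ commutes with restriction to affine opens (by induction, since $\Lambda^r$, $\Gamma$, $N$ all commute with the exact pullback functor), then invoke the affine-case result of Corollary~\ref{operations restrict} together with the fact that acyclicity and boundedness of complexes in $\proj(X)$ can be checked on an affine cover. The only cosmetic difference is that the paper checks boundedness locally alongside acyclicity, while you argue boundedness globally via Lemma~\ref{length}; both work equally well for quasi-compact $X$.
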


\begin{proof}
 Given any open affine subscheme $U = \operatorname{Spec}(R)$ of $X$, a straightforward inductive argument shows that
 the following diagram commutes:
 \[
  \xymatrix{
  C^n\proj(X) \ar[r]^{\Lambda_n^r} \ar[d] & C^n\proj(X) \ar[d] \\
  C^n\proj(U) \ar[r]^{\Lambda_n^r} \ar@{=}[d] & C^n\proj(U) \ar@{=}[d] \\
  C^n\proj(R) \ar[r]^{\Lambda_n^r} & C^n\proj(R)
  }
 \]
 The vertical arrows are induced by the restriction functor
 $\proj(X) \rightarrow \proj(U)$, $P \mapsto P|_U$,
 and the lower horizontal arrow is the functor $\Lambda_n^r$ introduced in section
 \ref{Operations on acyclic complexes}. A complex in $C^n\proj(X)$ is
 acyclic, or bounded, if and only if its restriction to every open affine subscheme
 has the respective property,
 so Proposition~\ref{schemes restrict} follows from the results of section~\ref{Operations on acyclic complexes}.
\end{proof}

As in section \ref{Operations on binary multicomplexes}, one easily deduces that, for any complex
$P_{\cd}$ in $C^n\proj(X)$, the objects in $\Lambda_n^r(P_{\cd})$ do not
depend on the differentials in $P_{\cd}$. We can therefore extend the endofunctor
$\Lambda_n^r$ to an endofunctor of $(B^{\rm{q}}_{\rm{b}})^n\proj(X)$, which we denote by
$\Lambda_n^r$ again. The goal of the rest of this section is to prove the
following theorem.

\begin{thm}
 \label{main theorem for schemes}
 Let $n > 0$ and $r > 0$. The endofunctor $\Lambda_n^r$ of $(B^{\rm{q}}_{\rm{b}})^n\proj(X)$
 induces a well-defined homomorphism $\lambda^r \colon K_n(X) \rightarrow K_n(X)$.
\end{thm}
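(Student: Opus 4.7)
The plan is to verify that the endofunctor $\Lambda_n^r$ respects both families of relations in Grayson's presentation of $K_n(X)$ (Theorem/Definition~\ref{Grayson's def}): diagonal binary multicomplexes must be sent to classes that vanish, and each short exact sequence must be sent to an additivity relation. Once both are established, $\lambda^r$ is automatically a well-defined group homomorphism: for $x=[N_1]$ and $y=[N_2]$, the relation $\lambda^r(x+y) = \lambda^r(x)+\lambda^r(y)$ is the image under $\lambda^r$ of the split short exact sequence $0\to N_1 \to N_1\oplus N_2\to N_2 \to 0$.

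\medskip

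\textbf{Step 1 (diagonals).} Let $D\in(B^{\mathrm{q}}_{\mathrm{b}})^n\proj(X)$ be diagonal, with $d^i=\tilde{d}^i$ in some direction $i$. By construction (Definition~\ref{binary functor}), $\Lambda_n^r$ on binary multicomplexes is obtained by applying the non-binary $\Lambda_n^r$ from Corollary~\ref{operations restrict} separately to each of the $2^n$ underlying non-binary multicomplexes. The $2^n$ choices that agree on direction $i$ pair up, so the outputs coincide in that direction, and $\Lambda_n^r(D)$ is again diagonal in direction $i$. Hence $[\Lambda_n^r(D)]=0$ in $K_n(X)$.

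\medskip

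\textbf{Step 2 (short exact sequences).} Given $0\to N'\to N\to N''\to 0$ in $(B^{\mathrm{q}}_{\mathrm{b}})^n\proj(X)$, I would construct a natural finite filtration
\[
0 = F^{r+1}\subset F^r\subset\cdots\subset F^1\subset F^0 = \Lambda_n^r(N)
\]
inside $(B^{\mathrm{q}}_{\mathrm{b}})^n\proj(X)$, with $F^r\cong \Lambda_n^r(N')$, $F^0/F^1\cong \Lambda_n^r(N'')$, and interior quotients $F^p/F^{p+1}\cong \Lambda_n^p(N')\otimes_{\Delta,n}\Lambda_n^{r-p}(N'')$ for $0<p<r$. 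Granting such a filtration, the resulting short exact sequences yield in $K_n(X)$
\[
[\Lambda_n^r(N)]=[\Lambda_n^r(N')]+[\Lambda_n^r(N'')]+\sum_{0<p<r}[\Lambda_n^p(N')\otimes_{\Delta,n}\Lambda_n^{r-p}(N'')].
\]
For $0<p<r$ both factors lie in $(B^{\mathrm{q}}_{\mathrm{b}})^n\proj(X)$, so Proposition~\ref{tensor product vanishes} kills every middle summand, leaving $[\Lambda_n^r(N)]=[\Lambda_n^r(N')]+[\Lambda_n^r(N'')]$ as required.

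\medskip

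\textbf{Main obstacle.} The principal difficulty is constructing and verifying this filtration for binary multicomplexes on a general quasi-compact scheme. Over an open affine $\operatorname{Spec}(R)\subseteq X$ the category $\proj(R)$ is split exact, so the canonical filtration on $\Lambda^r$ of an extension of projective modules is available and can be pushed through the iterated Dold--Kan construction of Section~\ref{Operations on acyclic complexes}; the identification of its quotients with $\otimes_{\Delta,n}$-products should follow from the simplicial decomposition of $\Lambda^r$ of a direct sum of simplicial modules together with the Eilenberg--Zilber comparison underlying Lemma~\ref{comparison}. Because $\Lambda_n^r$ and $\otimes_{\Delta,n}$ commute with restriction to open affines (the commutation exploited in Proposition~\ref{schemes restrict}), and because acyclicity, boundedness and short-exactness of multicomplexes are all local on $X$, the affine filtrations glue to the required global filtration, after which steps 1 and 2 deliver the theorem.
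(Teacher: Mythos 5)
Your Steps~1 and~2 reproduce the paper's strategy exactly: send diagonals to diagonals, and for each short exact sequence build a filtration of $\Lambda_n^r(N)$ whose interior graded pieces are simplicial tensor products $\Lambda_n^{r-i}(N')\otimes_{\Delta,n}\Lambda_n^i(N'')$, then invoke Proposition~\ref{tensor product vanishes} to kill them. The paper likewise reduces to showing that the assignment factors through $K_0\bigl((B^{\rm q}_{\rm b})^n\proj(X)\bigr)$, and the filtration it constructs is by wedge subobjects $\Lambda_n^{r-i}(N')\wedge_n\Lambda_n^i(N)$ with quotient sequences of precisely the form you describe.

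Your ``main obstacle'' paragraph, however, rests on one misconception and introduces detours the paper does not need. The canonical filtration of $\Lambda^r$ of an extension of vector bundles --- by the images of $\Lambda^{r-i}(N')\otimes\Lambda^i(N)\to\Lambda^r(N)$, with quotients $\Lambda^{r-i}(N')\otimes\Lambda^i(N'')$ --- exists over \emph{any} scheme and has nothing to do with split exactness of $\proj(R)$; and split exactness of $\proj(R)$ would not in any case split the extension of binary multicomplexes (Example~\ref{binary not projective}), which is exactly the obstruction the paper has to work around. Hence no affine cover and no gluing of filtrations is needed: the paper constructs the wedge subobjects globally by induction on $n$, taking the classical image for $n=0$, and for $n>0$ applying $\Gamma$ to the short exact sequence, invoking the inductive hypothesis at each simplicial level, assembling the resulting level-wise short exact sequences into a short exact sequence of simplicial objects, and then applying $N$. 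That the cokernel is $\Lambda_n^{r-i}(N')\otimes_{\Delta,n}\Lambda_n^i(N'')$ falls out directly from Definition~\ref{higher simplicial tensors} and $\Gamma N\cong\mathrm{id}$, so no Eilenberg--Zilber comparison is required. Your outline would succeed if tightened along these lines, but as written it conflates split exactness with the existence of the filtration and proposes a gluing step that the construction does not actually require (and whose compatibility on overlaps you would in any case still have to verify).
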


\begin{defn}
 The homomorphism $\lambda^r$ in the previous theorem is called the
 \emph{$r^{\rm{th}}$ exterior power operation} on $K_n(X)$.
\end{defn}

\begin{proof}[Proof of Theorem~\ref{main theorem for schemes}]
If $P_{\cd}$ is a diagonal multicomplex in $(B^{\rm{q}}_{\rm{b}})^n\proj(X)$,
then the multicomplex $\Lambda_n^r(P_{\cd})$ is diagonal as well, by definition of
$\Lambda^r_n$. It therefore suffices to show that the association
$[P_{\cd}] \mapsto [\Lambda_n^r(P_{\cd})]$ induces a well-defined homomorphism
of groups
\[
 \lambda^r \colon K_0\left((B^{\rm{q}}_{\rm{b}})^n\proj(X)\right) \rightarrow K_n(X).
\]
Thus we need to show that the equality
\[
 [\Lambda_n^r(P_{\cd})] = [\Lambda_n^r(P'_{\cd})] + [\Lambda_n^r(P''_{\cd})]
\]
holds in $K_n(X)$ for every short exact sequence
$0 \rightarrow P'_{\cd} \rightarrow P_{\cd} \rightarrow P''_{\cd} \rightarrow 0$
in $(B^{\rm{q}}_{\rm{b}})^n\proj(X)$.
The classes $[\Lambda_n^{r-i}(P'_{\cd}) \otimes_{\Delta, n} \Lambda_n^i(P''_{\cd})],$
$i = 1, \dots , r-1$, vanish in $K_n(X)$ by Proposition~\ref{tensor product vanishes} applied
to the category $\mc{P} = \proj(X)$, where the simplicial tensor product has been
constructed inductively from the usual tensor product of quasi-coherent $\mc{O}_X$-modules.
So the desired equality is equivalent in $K_n(X)$ to the more familiar-looking identity
\[
 [\Lambda_n^r(P_{\cd})]
 = [\Lambda_n^r(P'_{\cd})]
 + \sum_{i=1}^{r-1}[\Lambda_n^{r-i}(P'_{\cd}) \otimes_{\Delta, n} \Lambda_n^i(P''_{\cd})]
 + [\Lambda_n^r(P''_{\cd})].
\]
In order to prove this latter formula, we cannot just apply the usual formula
for the $r^{\rm{th}}$ exterior power of a direct sum because the given short exact sequence of
binary complexes,
$0 \rightarrow P'_{\cd} \rightarrow P_{\cd} \rightarrow P''_{\cd} \rightarrow 0$,
does not split in general, even if $X$ is affine
(see Example~\ref{binary not projective}).
Instead, by induction on $n$, we construct for every sequence
$0 \rightarrow P'_{\cd} \rightarrow P_{\cd} \rightarrow P''_{\cd} \rightarrow 0$
in $(B^{\rm{q}}_{\rm{b}})^n\proj(X)$ a natural induced filtration
\[
 \Lambda_n^r(P'_{\cd}) \hookrightarrow
 \Lambda_n^{r-1}(P'_{\cd}) \wedge_n \Lambda_n^1(P_{\cd}) \hookrightarrow
 \cdots \hookrightarrow
 \Lambda_n^1(P'_{\cd}) \wedge_n \Lambda_n^{r-1}(P_{\cd}) \hookrightarrow
 \Lambda_n^r(P_{\cd})
\]
of $\Lambda_n^r(P_{\cd})$ by certain sub-objects
$\Lambda_n^{r-i}(P'_{\cd}) \wedge_n \Lambda_n^i(P_{\cd})$, $i = 0, \dots, r$
of $\Lambda_n^r(P_{\cd})$, also belonging to $(B^{\rm{q}}_{\rm{b}})^n\proj(X)$,
together with short exact sequences
\begin{multline}
  \label{eqn:wedge quotients}
    0 \longrightarrow
    \Lambda_n^{r-i+1}(P'_{\cd}) \wedge_n \Lambda_n^{i-1}(P_{\cd}) \longrightarrow
    \Lambda_n^{r-i}(P'_{\cd}) \wedge_n \Lambda_n^i(P_{\cd}) \\ \longrightarrow
    \Lambda_n^{r-i}(P'_{\cd}) \otimes_{\Delta, n} \Lambda_n^i(P''_{\cd}) \longrightarrow
    0,
  \end{multline}
  $i = 1, \dots , n$.\\
  For $n= 0$ and $i \in \{ 0 , \dots , r \}$, the object
  $\Lambda_0^{r-i}(P') \wedge_0 \Lambda_0^i(P)$ is defined to be what is usually meant by
  $\Lambda^{r-i}(P') \wedge \Lambda^i(P)$: the image of the canonical homomorphism
  $\Lambda^{r-i}(P') \otimes \Lambda^i(P) \rightarrow \Lambda^r(P)$. It is well-known
  that these objects come with the required short exact sequences \eqref{eqn:wedge quotients}.\\
  If $n > 0$ and if, for a moment, the sequence
  $0 \rightarrow P'_{\cd} \rightarrow P_{\cd} \rightarrow P''_{\cd} \rightarrow 0$
  is given in $C^{\rm{q}}_{\rm{b}}(C^{\rm{q}}_{\rm{b}})^{n-1}\proj(X)$ rather than in $(B^{\rm{q}}_{\rm{b}})^n\proj(X)$,
  we first note that applying the exact functor $\Gamma$ to the sequence, we get the short exact sequence
  $0 \rightarrow \Gamma(P'_{\cd}) \rightarrow \Gamma(P_{\cd}) \rightarrow
  \Gamma(P''_{\cd}) \rightarrow 0$
  of simplicial objects in $(C^{\rm{q}}_{\rm{b}})^{n-1}\proj(X)$. By the inductive hypothesis, the complexes
  $\Lambda_{n-1}^{r-i}(\Gamma(P')_m) \wedge_{n-1} \Lambda_{n-1}^i(\Gamma(P)_m),$
  $i = 0, \dots , n$, $m \ge 0$, belong to $(B^{\rm{q}}_{\rm{b}})^{n-1}\proj(X)$ and we have short exact sequences
  \begin{multline*}
   0 \longrightarrow
   \Lambda_{n-1}^{r-i+1}(\Gamma(P')_m) \wedge_{n-1} \Lambda_{n-1}^{i-1}(\Gamma(P)_m) \\ \longrightarrow
   \Lambda_{n-1}^{r-i}(\Gamma(P')_m) \wedge_{n-1} \Lambda_{n-1}^i(\Gamma(P)_m) \\ \longrightarrow
   \Lambda_{n-1}^{r-i}(\Gamma(P')_m) \otimes_{\Delta, n-1} \Lambda_{n-1}^i(\Gamma(P'')_m) \longrightarrow
   0,
  \end{multline*}
  $i = 1, \dots , r$, $m \ge 0$. These short exact sequences assemble to short exact sequences of simplicial objects in
  $(B^{\rm{q}}_{\rm{b}})^{n-1}\proj(X)$. By applying the exact functor $N$ we finally obtain the required objects
  \[
   \Lambda_n^{r-i}(P') \wedge_n \Lambda_n^i(P) \defeq
   N( \Lambda_{n-1}^{r-i}(\Gamma(P')) \wedge_{n-1} \Lambda_{n-1}^i(\Gamma(P))),
  \]
  $i = 0, \dots , r$, and the required short exact sequences \eqref{eqn:wedge quotients}.
  As the objects of the multicomplex $\Lambda_n^{r-i}(P') \wedge_n \Lambda_n^i(P)$ are independent
  of the differentials in the multicomplexes $P'_{\cd}$ and $P_{\cd}$, this
  construction of $\wedge_n$ passes to the category $(B^{\rm{q}}_{\rm{b}})^n\proj(X)$ as in section
  \ref{Operations on binary multicomplexes}.\\
  From Proposition~\ref{schemes restrict} and section \ref{Simplicial tensor products} we know that the complex
   $\Lambda^r_n(P_{\cd})$ and the complexes
   $\Lambda_n^{r-i}(P'_{\cd}) \otimes_{\Delta,n} \Lambda_n^i(P''_{\cd})$,
   $i = 0, \dots , r$, belong to $(B^{\rm{q}}_{\rm{b}})^n\proj(X)$. Now a straightforward downwards induction
   on $i$ based on the short exact sequences \eqref{eqn:wedge quotients} shows that the complexes
   $\Lambda_n^{r-i}(P'_{\cd}) \wedge_n \Lambda_n^i(P_{\cd})$,
   $i = 0, \dots , r$ are bounded and acyclic,
   so they belong to $(B^{\rm{q}}_{\rm{b}})^n\proj(X)$ as was to be shown.
\end{proof}

\section{The second \texorpdfstring{$\lambda$}{lambda}-ring axiom}
\label{Lambda axiom 4}

Given a scheme $X$, there is a `trivial' way to equip the graded abelian group
$K_{*}(X) \defeq \bigoplus_{n \ge 0} K_n(X)$
with a multiplication,
and to extend the exterior power operations defined in the previous section to
$K_{*}(X)$ so that they are compatible with addition in $K_{*}(X)$ in the usual sense.
The main result of this section is that they are also compatible with multiplication in the expected
way---that is to say, they satisfy the $\lambda$-ring axiom (2).

\medskip

Let $X$ be a quasi-compact scheme. We recall that $K_0(X)$ together with the usual exterior power operations
$\lambda^r \colon K_0(X) \rightarrow K_0(X)$, $r \ge 0$, is a $\lambda$-ring as defined
in the introduction (see chapter V of \cite{FultonLang}).
Furthermore, $K_n(X)$ is a $K_0(X)$-module via $[P] \cdot [Q_{\cd}] \defeq [P \otimes Q_{\cd}]$,
for $P$ in $\proj(X)$ and $Q_{\cd}$ in $(B_{\rm{b}}^{\rm{q}})^n\proj(X)$, see also Definition~\ref{usual tensor} (1).\\
We define a multiplication on $K_{*}(X) \defeq \bigoplus_{n \ge 0} K_n(X)$ by
\[
 (a_0,a_1,a_2, \dots ) \cd (b_0,b_1,b_2,\dots) = (a_0b_0, a_0b_1 + a_1b_0, a_0b_2 + a_2b_0, \dots);
\]
in particular, the product of any two elements in $\bigoplus_{n \ge 1} K_n(X)$ is defined to be zero.
With this multiplication, $K_{*}(X)$ is a commutative ring.
Furthermore, we define exterior power operations $\lambda^r \colon K_{*}(X) \rightarrow K_{*}(X)$, $r \ge 0$, by the formula
\[
 \lambda^r((a_0,a_1,a_2,\dots)) = \left(\lambda^r(a_0), \sum_{i=0}^{r-1}\lambda^i(a_0)\lambda^{r-i}(a_1), \sum_{i=0}^{r-1}\lambda^i(a_0)\lambda^{r-i}(a_2), \dots \right).
\]
By definition, we then have $\lambda^0(x) = 1$ and $\lambda^1(x) = x$ for all $x \in K_{*}(X)$.
A straightforward calculation using Theorem~\ref{main theorem for schemes} and the fact that $K_0(X)$ satisfies axiom $(1)$ of a $\lambda$-ring shows that
$K_{*}(X)$ also satisfies axiom $(1)$. The next theorem addresses axiom $(2)$.

\begin{thm}
 \label{lambda axiom 2}
 The pre-$\lambda$-ring $K_{*}(X)$ defined above satisfies axiom $(2)$ of a $\lambda$-ring.
\end{thm}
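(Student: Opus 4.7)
The plan is to use the standard splitting principle to reduce the $K_0$-parts of $x$ and $y$ to sums of classes of line bundles, then exploit the fact that products in $K_+(X) := \bigoplus_{n \ge 1} K_n(X)$ vanish together with a basic compatibility of the exterior powers $\Lambda^r_n$ with tensoring by line bundles. Specifically, write $x = x_0 + x_+$ and $y = y_0 + y_+$ with $x_0, y_0 \in K_0(X)$ and $x_+, y_+ \in K_+(X)$. By the projective bundle theorem applied iteratively, there is a flag bundle $\pi\colon Y \to X$ such that $\pi^* x_0,\pi^* y_0 \in K_0(Y)$ are $\bZ$-linear combinations of line bundle classes and $\pi^*\colon K_n(X) \to K_n(Y)$ is injective for every $n \ge 0$. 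Since $\lambda^r$ and the multiplication on $K_*$ commute with $\pi^*$, it suffices to verify axiom~(2) on $Y$, so we may assume $x_0,y_0$ are already $\bZ$-linear combinations of line bundle classes.

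Next, by the standard compatibility of the universal polynomials $P_r$ with axiom~(1) (cf.\@ chapter~I of \cite{FultonLang}), axiom~(2) for $(x,y)$ and $(x,y')$ implies axiom~(2) for $(x,y+y')$, and symmetrically in $x$. Applying this to the line bundle decompositions of $x_0,y_0$ and to the graded decompositions $x_+ = \sum_n x_n$, $y_+ = \sum_n y_n$, it suffices to verify axiom~(2) when each of $x$ and $y$ is either a single line bundle class or an element of a single $K_m(X)$ with $m \ge 1$. This leaves three cases. In case~(A), $x,y$ are both line bundle classes in $K_0(X)$, and the assertion is the classical axiom~(2) for $K_0(X)$. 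In case~(C), $x \in K_m(X)$ and $y \in K_{m'}(X)$ with $m,m' \ge 1$, so $xy = 0$ and hence $\lambda^r(xy) = 0$; on the other hand, since the defining identity $P_r(e_\bullet(u),e_\bullet(v)) = e_r(\{u_iv_j\})$ is homogeneous of degree $r$ in the $u_i$'s and of degree $r$ in the $v_j$'s, every monomial of $P_r(s_1,\ldots,s_r,t_1,\ldots,t_r)$ contains at least one factor $s_i$ and at least one factor $t_j$ with $i,j \ge 1$, so the substitution yields a sum of products in $K_+(X) \cdot K_+(X) = 0$.

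Case~(B) is the key case, where $x = [L]$ is a line bundle class and $y = [Q_\cd] \in K_m(X)$ with $m \ge 1$ (the symmetric situation is identical). Since $\lambda^i([L]) = 0$ for $i \ge 2$, the universal polynomial specialises to $P_r([L],0,\ldots,0,t_1,\ldots,t_r) = [L]^r t_r$ (as one verifies from $e_r(u_1v_1,\ldots,u_1v_r) = u_1^r e_r(v)$), so axiom~(2) reduces to the identity $[\Lambda^r_m(L \otimes Q_\cd)] = [L^{\otimes r} \otimes \Lambda^r_m(Q_\cd)]$ in $K_m(X)$. I view this as the main technical point, which I would establish by proving the stronger natural isomorphism of binary multicomplexes $\Lambda^r_m(L \otimes Q_\cd) \cong L^{\otimes r} \otimes \Lambda^r_m(Q_\cd)$. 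By induction on $m$, the base case is the classical formula $\Lambda^r(L \otimes P) \cong L^{\otimes r} \otimes \Lambda^r(P)$ for locally free sheaves $P$, and the inductive step follows from the definition $\Lambda^r_m = N\Lambda^r_{m-1}\Gamma$ together with the observation that the additive functor $L \otimes (-)$ commutes with both $\Gamma$ (which forms direct sums of its inputs) and $N$ (which forms cokernels), while $\Lambda^r_{m-1}$ turns $L \otimes (-)$ into $L^{\otimes r} \otimes (-)$ by the inductive hypothesis.
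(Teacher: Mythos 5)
Your proposal is correct and follows essentially the same route as the paper's proof: reduce to additive generators via the multiplicativity of $\lambda_t$ and axiom~(1), dispose of the case where both arguments lie in $\bigoplus_{n\ge 1}K_n$ by observing that every monomial of $P_r$ involves at least one $X_i$ and one $Y_j$, invoke the projective bundle theorem to replace the remaining $K_0$-factor by a line bundle class, and finally establish $\Lambda_n^r(\ms{L}\otimes P_{\cd})\cong\ms{L}^{\otimes r}\otimes\Lambda_n^r(P_{\cd})$ by induction on $n$ using $\Lambda^r_n = N\Lambda^r_{n-1}\Gamma$. The minor reordering (applying the splitting principle before rather than after the case split) and the choice to derive the vanishing of $P_r$ from its defining identity rather than quote it do not constitute a different argument.
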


\begin{proof}
 Axiom $(2)$ holds for elements of the form $x = (a_0,0,0,\dots)$, $y = (b_0,0,0,\dots)$ in $K_{*}(X)$ because it holds for $K_0(X)$.
 It also holds for elements of the form $x = (0, a_1,a_2,\dots)$ and $y=(0,b_1,b_2,\dots)$ because $\lambda^r(0)=0$ for all $r \ge 1$ and because
 every monomial in the ring $\bZ[X_1,\dots,X_r,Y_1,\dots,Y_r]$ whose coefficient in $P_r(X_1,\dots,X_r,Y_1,\dots,Y_r)$ is non-zero
 is divisible by some product $X_iY_j$. Furthermore, it suffices to check axiom $(2)$ for $x,y$ belonging to a set of additive generators
 of $K_{*}(X)$ because $K_{*}(X)$ satisfies axiom $(1)$ and because axiom $(2)$ is
 equivalent to the multiplicativity of the homomorphism
 \[
  \begin{array}{rrcl}
   \lambda_t \colon & K_{*}(X) & \rightarrow & 1 + t.K_{*}(X)[[t]] \\
   & x & \mapsto & \sum_{r \ge 0}\lambda^r(x)t^r.
  \end{array}
 \]
  We are therefore reduced to showing that the equality
  \begin{equation}
  \label{eqn:lambda2}
    \lambda^r(xy) = P_r(\lambda^1(x),\dots,\lambda^r(x),\lambda^1(y),\dots,\lambda^r(y))
  \end{equation}
  holds in $K_n(X)$ for elements $y \in K_n(X)$ and $x \in K_0(X)$ of the form $x = [\ms{E}]$ for some locally free
  $\mc{O}_X$-module $\ms{E}$ of finite rank.\\
  We now invoke the projective bundle theorem (\cite{Quil1}, \S 8, Theorem 2.1).
  We remark that its proof in \textit{loc.\@ cit.\@} only relies on the additivity and resolution theorems,
  and not, for instance, on the d\'{e}vissage theorem or localisation sequence. The additivity and resolution theorems
  have been proved in \cite{Har15} within the context of Grayson's definition of higher $K$-groups, so the projective bundle theorem also
  has a proof within that context, without resorting to topological methods.\\
  It is well-known that an iterated application of the projective bundle theorem yields the following splitting principle:
  there exists a projective morphism $f \colon Y \rightarrow X$ such that $f^{*}[\ms{E}]$ is the sum of invertible $\mc{O}_Y$-modules in $K_0(Y)$
  and such that $f^{*} \colon K_{*}(X) \rightarrow K_{*}(Y)$ is injective. It is straightforward to check that
  $f^{*} \colon K_{*}(X) \rightarrow K_{*}(Y)$ is a homomorphism of (pre-)$\lambda$-rings.
  Using the above argument about additive generators again, we are therefore reduced to showing the equality \eqref{eqn:lambda2}
  only when $x$ is the class $[\ms{L}]$ of an invertible $\mc{O}_X$-module $\ms{L}$.
  In that case, \eqref{eqn:lambda2} becomes the much simpler formula
  \[
   \lambda^r([\ms{L}]\cdot y) = [\ms{L}^{\otimes r}] \cdot \lambda^r(y),
  \]
  because $\lambda^2[\ms{L}] = \dots = \lambda^r[\ms{L}] = 0$, and because $P_r$
  satisfies the identity $P_r(1,0, \dots,0,Y_1,\dots,Y_r) = Y_r$
  and has $X$-degree $r$ (where $X_i$ is defined to be of degree $i$ for $i=1,\dots,r$).
  Using the argument about additive generators again, it suffices to show that
  for any object $P_{\cd}$ of $(B_{\rm{b}}^{\rm{q}})^n\proj(X)$, the object
  $\Lambda_n^r(\ms{L} \otimes P_{\cd})$ is isomorphic to $\ms{L}^{\otimes r} \otimes \Lambda_n^r(P_{\cd})$.
  This is well-known if $n=0$, and follows by induction on $n$ from the following chain of isomorphisms applied to each of the $2^n$ multicomplexes associated
  with the binary multicomplex~$P_{\cd}$ (which we again denote by $P_{\cd}$):
  \[
   \begin{array}[b]{rcl}
    \Lambda_n^r(\ms{L} \otimes P_{\cd}) & = & N\Lambda_{n-1}^r\Gamma(\ms{L} \otimes P_{\cd}) \\
    & \cong & N (\ms{L}^{\otimes r} \otimes \Lambda_{n-1}^r\Gamma(P_{\cd})) \\
    & \cong & \ms{L}^{\otimes r} \otimes N\Lambda_{n-1}^r\Gamma(P_{\cd}) \\
    & \cong & \ms{L}^{\otimes r} \otimes \Lambda_n^r(P_{\cd}).
   \end{array}\qedhere
  \]
\end{proof}

\section{The final \texorpdfstring{$\lambda$}{lambda}-ring axiom}
\label{Lambda final}

The goal of this section is to prove that the pre-$\lambda$-ring $K_*(X)$
(introduced and proven to satisfy $\lambda$-ring axiom (2) in the previous section)
also satisfies the final $\lambda$-ring axiom (3) and is therefore a $\lambda$-ring.
The main ingredients are the language of polynomial functors,
the identification of polynomial functors with modules over the Schur algebra,
and Serre's method of computing the Grothendieck group of representations of the
group scheme~$\textrm{GL}_{n, \bZ}$.

\subsection{Polynomial functors}

In this subsection we introduce the notion of polynomial functors and state that the Grothendieck
group of the category of polynomial functors over $\mathbb{Z}$ is isomorphic to the universal
$\lambda$-ring in one variable, see Theorem~\ref{Computing Grothendieck group of PolFun} below.
This theorem will allow us to prove the final $\lambda$-ring axiom for $K_*(X)$ in
subsection~\ref{Proof of final axiom}.
The proof of Theorem~\ref{Computing Grothendieck group of PolFun} will occupy the next two subsections.

We recall $\proj(S)$ denotes the category of $\cO_S$-modules that are locally free of finite rank on a scheme $S$.
We define a category $\uP(S)$ `enriched in schemes over $S$' as follows.
The objects are the same as the objects of $\proj(S)$, and for every $V,W \in \proj(S)$ we have an $S$-scheme
\[
	\uHom(V,W) := \uSpec_S \Sym^\bullet (\cHom(V,W)^\vee).
\]
This is the `physical vector bundle' corresponding to the locally free $\cO_S$-module $\cHom(V,W)$ and we have
\[
	\uHom(V,W)(T) = \Hom_{\cO_T}( V_T, W_T )
\]
for every $S$-scheme $T$.
In fact, by Yoneda's Lemma, we may think of $\uHom(V,W)$ as the functor which associates $\Hom_{\cO_T}( V_T, W_T )$ with every $S$-scheme $T$.
The latter viewpoint is used in a lot of literature about polynomial functors.
Composition in $\uP(S)$ is given by the natural maps
\[
	\uHom(U,V) \times_S \uHom(V,W) \to \uHom(U,W)
\]
of schemes over $S$, and the identities are given by the the obvious sections $\id_V\in \uHom(V,V)(S)$.

\begin{defn}\label{PolynomialFunctor}
A \emph{polynomial functor $F$ over $S$} is an enriched functor $F\colon \uP(S) \to \uP(S)$.
A \emph{morphism of polynomial functors} is a natural transformation. We denote the category of polynomial functors over $S$ by $\Pol(S)$.
\end{defn}

In other words, a polynomial functor consists of objects $FV\in \proj(S)$,  $V\in \proj(S)$, and of morphisms of $S$-schemes
\[
	F \colon \uHom(V,W) \to \uHom(FV,FW), \quad  V,W \in \proj(S)
\]
which satisfy the usual functor axioms. In less precise terms, $Ff$ being a morphism of $S$-schemes means that, if for instance $S= \textrm{Spec}(k)$, $k$ a field, the map $Ff: \Hom(V,W) \rightarrow \Hom(FV, FW)$ is given by polynomials in coordinates of $V$ and $W$. Note that we do not ask $Ff$ to be additive. Every polynomial functor $F$ induces an `ordinary' endofunctor of $\proj(S)$, denoted by $F$ again. A morphism $\eta\colon F\to G$ consists of a morphism of $\cO_S$-modules
\[
	\eta_V \colon FV \to GV
\]
for every $V\in \proj(S)$, satisfying the usual conditions for a natural transformation.

\begin{ex}[Exterior powers] Functoriality of $\Lambda^d$ implies that for all $V,W \in \proj(S)$ we have a map
\[
	\Hom(V,W) \longto \Hom(\Lambda^d V, \Lambda^d W).
\]
This is {\em a priori} a map of sets, but its formation commutes with base change $T\to S$, and hence by Yoneda it defines a map of $S$-schemes
\[
	\uHom(V,W) \longto \uHom(\Lambda^d V, \Lambda^d W).
\]
We obtain a polynomial functor $\Lambda^d\colon \uP(S) \to \uP(S)$.
\end{ex}

The category $\Pol(S)$ is a $\Gamma(S,\cO_S)$-linear category. We declare a sequence $0\to F \to G\to H \to 0$ in $\Pol(S)$ to be exact if the sequence
\[
	0 \to FV \to GV \to HV \to 0
\]
is exact for every $V$;  this way $\Pol(S)$ becomes an exact category \cite[Section~2.1.1]{To13}. It carries a tensor product
\[
	\otimes \colon \Pol(S) \times \Pol(S) \to \Pol(S)
\]
as well as exterior power operators
\[
	\Lambda^n \colon \Pol(S) \to \Pol(S),\quad F \mapsto \Lambda^n F:=\Lambda^n\circ F.
\]
These data turn $K_0(\Pol(S))$ into a pre-$\lambda$-ring.
To prove this, one proceeds as in the proof of Theorem~\ref{main theorem for schemes}.
As there, the category $\Pol(S)$ is in general not split exact, but for every short exact sequence as above, one can construct a natural filtration
\[0 \subset \Lambda^n F \subset F \wedge \ldots \wedge F \wedge G \subset \ldots \subset F \wedge G \wedge \ldots \wedge G \subset \Lambda^n G\]
of $\Lambda^n G$ whose successive quotients are isomorphic to $\Lambda^{n-k} F \otimes \Lambda^k H$, $k=0, \ldots, n$.

Less evident is that for every morphism $f\colon T\to S$ there is a natural \emph{base change functor} $f^\ast: \Pol(S) \to \Pol(T)$.
This can be constructed as follows.
Let $F\colon \uP(S) \to \uP(S)$ be a polynomial functor.
Given $V \in \proj(T)$ one chooses an open cover $(U_i)$ of $T$,
vector bundles $V_i \in \uP(S)$ and isomorphisms $\alpha_i \colon (f^\ast V_i)|_{U_i} \to V|_{U_i}$.
These define glueing data $\alpha_{ij} := \alpha_i^{-1}\alpha_j$
and one constructs the desired $(f^\ast F)V$ by glueing the bundles $f^\ast (FV_i)$ over the $U_{ij}$ using the maps $F(\alpha_{ij})$.
Note that the expression $F(\alpha_{ij})$ makes sense as $F$ is a polynomial functor.
For an alternative description of $f^*$, see Remark~\ref{rmk:base-change}.

Thus, every polynomial functor $F\in \Pol(S)$ induces a family of functors
\[
	F_T \colon \proj(T) \to \proj(T),
\]
indexed by $T\to S$, and that the $F_T$ commute with base change.

The functor $f^\ast$ is exact, and commutes with the operations $\otimes$ and $\Lambda^n$, so that $f^\ast$ induces a morphism
\[
	f^\ast \colon K_0(\Pol(S)) \to K_0(\Pol(T))
\]
of pre-$\lambda$-rings.

\begin{defn}
A polynomial functor $F\in \Pol(S)$ is said to be \emph{homogeneous of degree $d$} if, for every $V\in \proj(S)$, the diagram
\[
\begin{tikzcd}
\bG_{m,S} \arrow{r}{x\;\mapsto\; x^d} \arrow{d} & \bG_{m,S} \arrow{d} \\
\uHom(V,V) \arrow{r}{F} & \uHom(FV,FV)
\end{tikzcd}
\]
commutes; here, the vertical morphisms are given by scalar multiplication. We denote by $\Pol_d(S)$ the category of polynomial functors homogenous of degree $d$, and by $\Pol_{<\infty}(S)$ the category of polynomial functors that are finite direct sums of homogeneous polynomial functors.
\end{defn}

\begin{ex} The polynomial functor $\Lambda^d$ is homogenous of degree $d$. The infinite direct sum $\oplus_{d\geq 0} \Lambda^d$ is well-defined as it becomes finite when applied to any $V$; it is a polynomial functor, but not in $\Pol_{<\infty}(S)$.
\end{ex}

Let $\bZ[s_1,s_2,\ldots ]$ denote the ring of symmetric functions, with $s_i$ the $i^\textrm{th}$ elementary symmetric function. This is a $\lambda$-ring, with $\lambda^i(s_1) = s_i$, also called the universal $\lambda$-ring in one variable, see \cite[\S 1.3]{Yau}. It is also a graded ring with $\deg s_d = d$.

\begin{thm}
\label{Computing Grothendieck group of PolFun}
The ring homomorphism
\[
	\bZ[s_1,s_2,\ldots ] \to K_0(\Pol_{< \infty}(\bZ)),\quad s_i \mapsto [\Lambda^i]
\]
is an isomorphism of pre-$\lambda$-rings.
\end{thm}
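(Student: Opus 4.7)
Both rings are graded: the source by $\deg s_i = i$, the target by the direct sum decomposition $\Pol_{<\infty}(\bZ) = \bigoplus_{d\geq 0} \Pol_d(\bZ)$ (homogeneous components split off canonically). The map respects the grading: $s_\lambda := s_{\lambda_1}\cdots s_{\lambda_k}$ is sent to $[\Lambda^\lambda] := [\Lambda^{\lambda_1}\otimes\cdots\otimes\Lambda^{\lambda_k}] \in K_0(\Pol_d(\bZ))$ for $\lambda = (\lambda_1, \ldots, \lambda_k) \vdash d$. On the source, the degree-$d$ component is freely generated as an abelian group by the $s_\lambda$ with $\lambda \vdash d$. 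So the theorem reduces, degree-wise, to showing that $K_0(\Pol_d(\bZ))$ is free abelian with basis $\{[\Lambda^\lambda]\}_{\lambda \vdash d}$. Compatibility with the $\lambda$-operations will be straightforward once the ring isomorphism is in hand, using that $\bZ[s_1, s_2, \ldots]$ is the universal $\lambda$-ring on one generator ($s_1$) and that the map sends this generator to $[\mathrm{id}]$ with $\lambda^i([\mathrm{id}]) = [\Lambda^i]$ matching the image of $s_i = \lambda^i(s_1)$.

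The first substantial step is to replace $\Pol_d(\bZ)$ with a category of modules over a $\bZ$-order. Following Krause \cite[Section~8.2]{Krause}, for any $n \geq d$ there is an exact equivalence
\[
    \Pol_d(\bZ) \simeq S_\bZ(n,d)\text{-}\mathrm{lat},
\]
where $S_\bZ(n,d) = \End_{\Sigma_d}((\bZ^n)^{\otimes d})$ is the Schur algebra---a $\bZ$-algebra that is free of finite rank as a $\bZ$-module---and $S_\bZ(n,d)\text{-}\mathrm{lat}$ denotes finitely generated $S_\bZ(n,d)$-modules that are free (equivalently, projective) as $\bZ$-modules. This places us in the classical setting of integral representation theory of an order in a finite-dimensional $\bQ$-algebra.

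Next, mimicking Serre's strategy \cite{Serre} from his analogous computation for $\mathrm{GL}_{n,\bZ}$, I would show that the rational base change map
\[
    K_0(\Pol_d(\bZ)) \to K_0(\Pol_d(\bQ))
\]
is an isomorphism. For each $\lambda \vdash d$ there is an integral Weyl module $V_\lambda^\bZ \in \Pol_d(\bZ)$ whose rationalization $V_\lambda^\bQ$ is a simple $S_\bQ(n,d)$-module. Surjectivity of the base change map is classical (every $S_\bQ(n,d)$-module contains a full $S_\bZ(n,d)$-lattice). For injectivity, I would argue by induction on the composition length of the rationalization that every lattice class is a $\bZ$-linear combination of the $[V_\lambda^\bZ]$, using that a short exact sequence in $\Pol_d(\bQ)$ lifts, up to finite-length discrepancies that vanish in $K_0$ by a dévissage argument, to a short exact sequence of lattices. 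Combined with Green's theorem \cite{Gr}---which identifies $K_0(\Pol_d(\bQ))$ as the free abelian group on $\{[V_\lambda^\bQ]\}_{\lambda \vdash d}$ via the semisimplicity of $S_\bQ(n,d)$ for $n\geq d$---this yields that $K_0(\Pol_d(\bZ))$ is free abelian with basis $\{[V_\lambda^\bZ]\}_{\lambda \vdash d}$.

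The final step is a unimodular change of basis from $\{[V_\lambda^\bZ]\}$ to $\{[\Lambda^\lambda]\}$. After rationalization and passing to characters, the characters of $V_\lambda^\bQ$ and $\Lambda^\lambda$ in $\bZ[x_1,\ldots,x_n]$ are respectively the Schur polynomial $s_\lambda$ and the product $e_{\lambda_1}\cdots e_{\lambda_k}$ of elementary symmetric polynomials. The dual Jacobi--Trudi formula expresses each $s_\mu$ as an integral determinant in the $e_j$, making the transition matrix between these two bases of the degree-$d$ symmetric polynomials unitriangular with respect to the dominance order on conjugate partitions---hence unimodular. The principal obstacle throughout will be the Serre-style lifting step, transferring from $K_0$ over $\bQ$ back to $K_0$ over $\bZ$ via integral lattices; Krause's equivalence, Green's theorem, and the Jacobi--Trudi combinatorics all plug in as classical inputs.
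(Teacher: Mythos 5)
Your overall strategy — reduce via Krause's equivalence to modules over the Schur algebra $\Gamma^d\Mat(n,\bZ)$, then mimic Serre to show $K_0$ over $\bZ$ agrees with $K_0$ over $\bQ$, then invoke Green — is essentially the paper's. But the crux of Serre's argument, and the place where your sketch has a genuine gap, is precisely the step you describe as ``finite-length discrepancies vanish in $K_0$ by a d\'evissage argument.'' This vanishing is \emph{not} elementary. The issue is the base case of your induction: given any $\Gamma^d\Mat(n,\bZ)$-lattice $\Lambda$ with $\Lambda\otimes\bQ$ simple, you need $[\Lambda]=[V_\lambda^\bZ]$ in $K_0(\cM_\mathrm{p}(\bZ))$, i.e.\@ any two lattices in the same rational module have equal class. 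This amounts to showing that the class of any $\bZ$-torsion $\Gamma^d\Mat(n,\bZ)$-module vanishes in $K_0(\cM(\bZ))$, and that is exactly the hard step. The paper's route, following Serre, goes through the modular theory: one has the localization sequence $\oplus_\ell K_0(\cM(\bF_\ell)) \to K_0(\cM(\bZ)) \to K_0(\cM(\bQ)) \to 0$ (Lemma~\ref{lemma:localization-sequence}); one shows the composite $K_0(\cM(\bQ)) \xrightarrow{d_\ell} K_0(\cM(\bF_\ell)) \to K_0(\cM(\bZ))$ is zero by the ``multiplication by $\ell$'' trick (Lemma~\ref{lemma:composite-zero}); and then one needs $d_\ell$ to be \emph{surjective} to conclude the whole map $K_0(\cM(\bF_\ell))\to K_0(\cM(\bZ))$ vanishes. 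That surjectivity (Proposition~\ref{prop:dec-iso}) requires Green's theorem \emph{over} $\bF_\ell$, not just over $\bQ$: one needs to know $K_0(\cM(\bF_\ell))$ is itself free on the classes $[\Lambda^{d_1}\otimes\cdots\otimes\Lambda^{d_m}]$ so that $d_\ell$ is a bijection. Your proposal only invokes Green (in fact only semisimplicity) over $\bQ$, and never brings in the modular picture at all, so the key vanishing statement is unsupported.

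Separately, your final step — passing through integral Weyl modules and a Jacobi--Trudi unimodular change of basis — is a detour the paper avoids. Green's theorem as stated in the paper (Theorem~\ref{thm:K0-over-field}) already identifies $K_0(\cM(K))$ with $\bZ[s_1,s_2,\ldots]_d$ via formal characters, and the paper's map sends $s_i\mapsto[\Lambda^i]$ directly, so products of elementary symmetric functions go to tensor products of exterior powers with no translation needed. The Jacobi--Trudi step would work but adds nothing.
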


The proof of this theorem will be given at the end of subsection~\ref{Grothendieck groups over Z}.

\begin{cor}
\label{PolynomialFunctorsFormLambdaRing}
$K_0(\mathrm{Pol}_{< \infty}(\bZ))$ is a $\lambda$-ring. \hspace*{\fill}$\Box$
\end{cor}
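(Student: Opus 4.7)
The plan is to deduce the corollary directly from Theorem~\ref{Computing Grothendieck group of PolFun}. The theorem asserts that the map $\bZ[s_1, s_2, \ldots] \to K_0(\Pol_{<\infty}(\bZ))$ sending $s_i$ to $[\Lambda^i]$ is an isomorphism of pre-$\lambda$-rings. Since the $\lambda$-ring structure on a pre-$\lambda$-ring is a \emph{property} (axioms (2) and (3)) rather than additional data, any pre-$\lambda$-ring isomorphic to a genuine $\lambda$-ring is itself a $\lambda$-ring.

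Therefore, the only thing to verify is that the source $\bZ[s_1, s_2, \ldots]$ is indeed a $\lambda$-ring. This is the standard fact that the ring of symmetric functions in infinitely many variables---equivalently, the universal $\lambda$-ring on one generator---satisfies axioms (2) and (3); I would simply cite \cite[\S 1.3]{Yau} (or \cite{FultonLang}) for this. Concretely, if one presents $\bZ[s_1, s_2, \ldots]$ as the ring of symmetric functions via $s_i \mapsto e_i(x_1, x_2, \ldots)$, then the elementary symmetric functions satisfy $\lambda^k(e_1) = e_k$, and the $\lambda$-ring axioms follow from the splitting principle that identifies $\lambda^r(x)$ with the $r^\mathrm{th}$ elementary symmetric function of the formal roots of $\lambda_t(x)$.

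Given this, the argument is a one-liner: the axioms (2) and (3) are expressed by the polynomial identities
\[
\lambda^r(xy) = P_r(\lambda^1 x, \ldots, \lambda^r x, \lambda^1 y, \ldots, \lambda^r y), \qquad \lambda^r(\lambda^s(x)) = P_{r,s}(\lambda^1 x, \ldots, \lambda^{rs} x),
\]
which are universal integral polynomial relations in the ring operations and the $\lambda^i$. Since the map in Theorem~\ref{Computing Grothendieck group of PolFun} is a bijection of sets that commutes with addition, multiplication, and every $\lambda^i$, these identities transport from $\bZ[s_1, s_2, \ldots]$ to $K_0(\Pol_{<\infty}(\bZ))$.

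There is no real obstacle here: all the content is in Theorem~\ref{Computing Grothendieck group of PolFun}; the corollary is merely the observation that being a $\lambda$-ring is preserved under pre-$\lambda$-ring isomorphism. For this reason the proof in the paper is presumably just one sentence, as indicated by the $\Box$ immediately following the statement.
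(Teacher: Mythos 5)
Your argument is correct and matches the paper's intent exactly: the paper itself notes just before Theorem~\ref{Computing Grothendieck group of PolFun} that $\bZ[s_1,s_2,\ldots]$ carries a unique $\lambda$-ring structure with $\lambda^r(s_1)=s_r$, and the corollary is then treated as an immediate transport of the axioms across the pre-$\lambda$-ring isomorphism, which is precisely what you spell out.
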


\subsection{The Schur algebra}\label{SchurAlgebra}

The object of this subsection is to relate polynomial functors to the Schur algebra, see
\cite[Ch.~I \& Ch.~IV]{Roby} and \cite[\S 2]{Krause} for details.

Throughout this subsection, $R$ is a commutative ring. If $M$ is a locally free $R$-module and $d$ a non-negative integer, then the $R$-module of degree~$d$ divided powers is the module of symmetric degree~$d$ tensors:
\[\Gamma^d M = \Gamma^d_R M = (M^{\otimes d})^{S_d}.\]

If $A$ is an associative and locally free $R$-algebra and $M$ is moreover an $A$-module, then $\Gamma^d_R A$ is a sub-$R$-algebra of $A^{\otimes d}$ and the obvious multiplication of $\Gamma^d_R A$ on $\Gamma^d_R M$ turns $\Gamma^d_R M $ into a $\Gamma^d_R A$-module.

Let $n$ be a positive integer. Consider the \emph{Schur algebra} $\Gamma^d \Mat(n,R)$ of $R$ associated with $n$ and $d$. It is free as an $R$-module. For every $R$-module $V$,
the module $V^n = \Hom(R^n,V)$ is a right $\Mat(n,R)$-module,
hence $\Gamma^d(V^n)$ is a right $\Gamma^d \Mat(n,R)$-module.

\begin{lem}
\label{lemma:projective-schur-module}
If $V$ is a projective $R$-module, then $\Gamma^d(V^n)$ is a projective right $\Gamma^d \Mat(n,R)$-module.
\end{lem}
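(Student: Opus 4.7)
My plan is to reduce, via the functoriality of $\Gamma^d$, to the case where $V$ is free of finite rank, and then to handle the free case using the exponential decomposition of divided powers together with a structural analysis of the Schur algebra.

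The functor $\Gamma^d$ preserves retracts: if $\iota \colon V \hookrightarrow V'$ is split by $\pi \colon V' \to V$, then $\Gamma^d\pi \circ \Gamma^d\iota = \id_{\Gamma^d V}$ by functoriality. When $\iota,\pi$ are moreover right $\Mat(n,R)$-linear, $\Gamma^d\iota$ and $\Gamma^d\pi$ are right $\Gamma^d\Mat(n,R)$-linear, since the Schur-algebra action on $\Gamma^d V$ is by definition the image of the $\Mat(n,R)$-action under $\Gamma^d$. Since $V$ is finitely generated projective, $V$ is a retract of some $R^k$; applying $(-)^n = \Hom_R(R^n,-)$ exhibits $V^n$ as a retract of $(R^k)^n$ as a right $\Mat(n,R)$-module, so $\Gamma^d(V^n)$ is a retract of $\Gamma^d((R^k)^n)$ as a right $\Gamma^d\Mat(n,R)$-module. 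It therefore suffices to treat the case $V = R^k$.

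For $V = R^k$, the isomorphism $(R^k)^n \cong \bigoplus_{i=1}^k R^n$ of right $\Mat(n,R)$-modules combined with the classical exponential decomposition of divided powers of a direct sum gives
\[
  \Gamma^d\!\left(\bigoplus_{i=1}^k R^n\right) \;\cong\; \bigoplus_{d_1+\cdots+d_k=d} \Gamma^{d_1}(R^n)\otimes_R \cdots \otimes_R \Gamma^{d_k}(R^n),
\]
where on each summand the right $\Gamma^d\Mat(n,R)$-action factors through the corresponding component of the iterated comultiplication of the divided-power bialgebra $\bigoplus_d \Gamma^d\Mat(n,R)$ arising from the diagonal $\Mat(n,R) \to \Mat(n,R)^k$.

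\textbf{The main obstacle} is the remaining projectivity claim: each tensor summand $\Gamma^{d_1}(R^n)\otimes\cdots\otimes\Gamma^{d_k}(R^n)$ must be projective over $\Gamma^d\Mat(n,R)$ via this comultiplication action. The subtlety is that this does not reduce formally to the individual $\Gamma^{d_i}(R^n)$'s being projective over $\Gamma^{d_i}\Mat(n,R)$, because the action goes through a comultiplication rather than being component-wise. For tuples $(d_1,\ldots,d_k)$ with at most $n$ nonzero entries, the summand can be realised (after discarding the $\Gamma^0(R^n) = R$ factors) as a direct summand of $\Gamma^d((R^n)^n) = \Gamma^d\Mat(n,R)$, which is free of rank one over itself, giving projectivity by inspection. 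The remaining case of more than $n$ nonzero entries — which can arise only when $V$ has local rank strictly greater than $n$ — requires the identification of polynomial functors of degree $d$ with $\Gamma^d\Mat(n,R)$-modules developed in \cite[Section~8.2]{Krause}, under which the required summands correspond to representable (Yoneda-type) functors in the Schur category and hence to projective Schur-algebra modules.
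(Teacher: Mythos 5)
Your reduction to the free case via retracts and the exponential decomposition of $\Gamma^d$ on a direct sum match the paper's proof exactly. The divergence is in the final step: the paper simply cites \cite[Prop.~2.1]{AkinBuchsbaum} to conclude that each summand $\Gamma^{d_1}(R^n)\otimes_R\cdots\otimes_R\Gamma^{d_k}(R^n)$ is projective over $\Gamma^d\Mat(n,R)$, whereas you attempt a direct argument. Your treatment of tuples with at most $n$ nonzero entries is correct and clean: dropping the $\Gamma^0(R^n)=R$ factors and padding to length $n$ realises the summand as a direct summand of $\Gamma^d((R^n)^n)=\Gamma^d\Mat(n,R)$, which is free of rank one over itself, and the $\Mat(n,R)$-action on $V^n$ is componentwise, so this identification is one of $\Gamma^d\Mat(n,R)$-modules.

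The gap is in the other branch. Since each nonzero $d_i$ is at least $1$, any tuple $(d_1,\ldots,d_k)$ with $\sum d_i = d$ has at most $d$ nonzero entries, so more than $n$ nonzero entries can occur only when $d > n$ (in addition to $k > n$). But the equivalence between $\Pol_d(R)$ and $\Gamma^d\Mat(n,R)$-modules from \cite[Section~8.2]{Krause}, and hence the dictionary you invoke under which representable functors correspond to projective Schur-algebra modules, is established only under the hypothesis $n \ge d$, which is exactly the regime in which your second case is vacuous. The machinery you cite is therefore available precisely where you do not need it and unavailable where you do. The paper sidesteps this by relying on Akin--Buchsbaum, whose result gives projectivity of these divided-power tensor products over the Schur algebra with no constraint relating $n$ and $d$. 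For the one place the lemma is actually applied (inside the proof of Theorem~\ref{PolynomialSchur}, where $n \ge d$ is assumed), your first case together with the observation that nonzero entries number at most $d$ would already suffice; but as a proof of the lemma as stated, without the hypothesis $n \ge d$, the argument has a genuine hole.
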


\begin{proof}
If $V$ is a direct summand of $W$, then $\Gamma^d(V^n)$ is a direct summand of $\Gamma^d(W^n)$, so without loss of generality we may assume that $V$ is a free $R$-module. Then $\Gamma^d (V^n)$ is a direct sum of $\Gamma^d\Mat(n,R)$-modules of the form
\[
	\Gamma^{d_1}(R^n) \otimes_R \cdots \otimes_R \Gamma^{d_i} (R^n)
\]
with $\sum d_i = d$. By  \cite[Prop.~2.1]{AkinBuchsbaum} these are projective over the Schur algebra $\Gamma^d\Mat(n,R)$, and the lemma follows.
\end{proof}

We denote by $\cM(R,n,d)$ the category of finitely generated left modules over the Schur algebra $\Gamma^d\textrm{Mat}(n,R)$, and by $\cM_\textrm{p}(R,n,d)$ the full sub-category consisting of those modules whose underlying $R$-module is projective.

We have a `truncation' functor,
\[
	\Pol_d(R) \to \cM_\textrm{p}(R,n,d),\quad F \mapsto F(R^n),
\]
where the structure of left $\Gamma^d\Mat(n,R)$-module on $F(R^n)$ is defined as follows.
We have a map
\[
	\uEnd(R^n) \overset{F}{\longto} \uEnd(F(R^n))
\]
which is homogeneous of degree $d$. By the universal property of divided powers (see \cite[Proposition~IV.1]{Roby} and \cite[Proposition~2.5.1]{Ferrand}), this map is induced by an $R$-module homomorphism
\[
	\Gamma^d \End(R^n) \to \End(F(R^n)),
\]
which is moreover multiplicative, hence giving $F(R^n)$ the structure of a $\Gamma^d\End(R^n)$-module.

\begin{thm}
\label{PolynomialSchur}
If $n\geq d$, the functor $\Pol_d(R) \to \cM_{\mathrm{p}}(R,n,d)$ is an equivalence of categories.
\end{thm}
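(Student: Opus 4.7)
The plan is to construct an explicit quasi-inverse to the truncation functor and verify directly that the two compositions are isomorphic to the identity; the crucial hypothesis $n\geq d$ will enter only at the very last step.

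I begin by defining a candidate inverse $\Phi\colon \cM_{\mathrm p}(R,n,d)\to \Pol_d(R)$ by the formula
\[
  \Phi(M)(V) \;:=\; \Gamma^d(V^n)\otimes_{\Gamma^d\Mat(n,R)} M,
\]
where $V^n=\Hom(R^n,V)$ carries a right $\Mat(n,R)$-action by precomposition, and hence $\Gamma^d(V^n)$ inherits a right $\Gamma^d\Mat(n,R)=S(n,d)$-module structure. The assignment $V\mapsto \Gamma^d(V^n)$ is manifestly an enriched polynomial functor of degree $d$ from $\uP(R)$ to itself (being a composition of the additive $V\mapsto V^n$ with the homogeneous degree $d$ operation $\Gamma^d$), and tensoring with a fixed module over $S(n,d)$ preserves enrichment and degree. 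That $\Phi(M)(V)$ lies in $\proj(R)$ follows from Lemma~\ref{lemma:projective-schur-module}: $\Gamma^d(V^n)$ is a projective right $S(n,d)$-module, so tensoring over $S(n,d)$ with $M$ (which is itself projective over $R$) yields a direct summand of a finitely generated free $R$-module.

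Next I verify the easier of the two compositions. Since $(R^n)^n=\End(R^n)=\Mat(n,R)$, one has $\Gamma^d((R^n)^n)=S(n,d)$ as an $S(n,d)$-bimodule, where the right action is internal multiplication and the left action comes from the functoriality $V\mapsto \Gamma^d(V^n)$ in $V=R^n$. Consequently
\[
  \Phi(M)(R^n)\;=\;S(n,d)\otimes_{S(n,d)} M\;=\;M
\]
as left $S(n,d)$-modules, naturally in $M$. No hypothesis on $n$ is required here. In the other direction, for $F\in \Pol_d(R)$, the polynomial map $V^n=\Hom(R^n,V)\to \Hom(F(R^n),F(V))$ provided by $F$ is of degree $d$, so by the universal property of divided powers it factors through an $R$-linear map $\Gamma^d(V^n)\to \Hom(F(R^n),F(V))$, which is $S(n,d)$-balanced and hence produces a natural transformation
\[
  \mu_V\colon \Phi(F(R^n))(V)\;=\;\Gamma^d(V^n)\otimes_{S(n,d)} F(R^n)\;\longrightarrow\; F(V).
\]
By construction $\mu_{R^n}$ is the identity map.

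The main obstacle is to show that $\mu_V$ is an isomorphism for every $V\in \proj(R)$, which is exactly where $n\geq d$ is needed. Both source and target commute with Zariski localisation on $\operatorname{Spec} R$ (the source by inspection; the target because $F$ is a polynomial functor), so it suffices to treat $V=R^m$ for arbitrary $m\ge 0$. If $m\leq n$, choose $i\colon R^m\hookrightarrow R^n$ and $p\colon R^n\twoheadrightarrow R^m$ with $pi=\id$; applying $F$ and applying $\Gamma^d((-)^n)\otimes_{S(n,d)}F(R^n)$ to this split idempotent expresses both $F(R^m)$ and $\Phi(F(R^n))(R^m)$ as the image of the same idempotent acting on $F(R^n)=\Phi(F(R^n))(R^n)$, and $\mu_{R^m}$ is identified with the restriction of $\mu_{R^n}=\id$, hence is an isomorphism. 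For $m>n$ the argument uses crucially that $n\geq d$: both $F$ and $\Phi(F(R^n))$ are polynomial functors of degree $\leq d$, and any such functor is determined, together with its natural transformations, by its restriction to free modules of rank $\leq d\leq n$, either via the standard cross-effect description (so that $F(R^m)$ is a canonical direct summand of $F((R^d)^{\oplus k})$ type expressions built from $F(R^d)$, compatibly with the same construction for $\Phi(F(R^n))$), or equivalently via the fact that the representable enriched functor $\Gamma^d\Hom(R^n,-)$ is a projective generator of the enriched functor category $\Pol_{\leq d}(R)$ when $n\geq d$. In either formulation, the already established isomorphism $\mu_{R^m}$ for $m\leq n$ (applied to $m=d$ in particular) propagates to all $m$, completing the proof that $\mu$ is a natural isomorphism and hence that truncation is an equivalence.
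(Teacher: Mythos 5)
Your proof is correct in outline, but it takes a genuinely different route from the paper, which simply cites Krause \cite{Krause} for the equivalence between degree-$d$ polynomial functors valued in arbitrary $R$-modules and all $\Gamma^d\Mat(n,R)$-modules, and then devotes its short argument entirely to the one remaining point: that the inverse functor $M \mapsto \Gamma^d((-)^n)\otimes_{\Gamma^d\Mat(n,R)}M$ lands in $\Pol_d(R)$, i.e.\ preserves ``finitely generated projective''. That preservation step is proved via Lemma~\ref{lemma:projective-schur-module} --- exactly the same lemma you invoke when checking $\Phi(M)(V)\in\proj(R)$. So the two proofs agree on the inverse functor and on the projectivity check, but you attempt to reprove the underlying categorical equivalence from scratch, whereas the paper outsources it.

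Your self-contained argument buys independence from the reference but requires the one nontrivial step that Krause's theorem encapsulates, namely that $\mu_V$ is an isomorphism for $V$ of rank $m>n$. As written this step is only a sketch, and the first alternative you offer (``$F(R^m)$ is a canonical direct summand of $F((R^d)^{\oplus k})$ type expressions built from $F(R^d)$'') conflates the statement with the actual mechanism: what one really uses is that the cross-effect decomposition
\[
F(R^m)\;\cong\;\bigoplus_{j=0}^{d}\;\bigoplus_{\substack{S\subseteq\{1,\dots,m\}\\|S|=j}}\operatorname{cr}_j(F)(R,\ldots,R)
\]
is \emph{natural} in $F$ (the idempotents defining the summands are induced by split mono/epis $R^j\rightleftarrows R^m$), that $\operatorname{cr}_j(F)(R,\ldots,R)$ is a canonical direct summand of $F(R^j)$ with $j\leq d\leq n$, that $\Phi(F(R^n))$ has degree $\leq d$ too (because $\Gamma^d((-)^n)$ does), and that $\mu$ therefore restricts to a map of each cross-effect summand, reducing the claim to the already-handled ranks $\leq n$. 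The second alternative you offer --- projectivity of the representable functor $\Gamma^d\Hom(R^n,-)$ when $n\geq d$ --- is essentially the theorem being proved, so it cannot be cited as a lemma without circularity. With the cross-effect argument spelled out along the lines above, your proof is complete; as it stands, the decisive case is asserted rather than proved.
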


\begin{proof} See \cite{Krause}, where the same result is shown for polynomial functors taking values in arbitrary $R$-modules, and arbitrary $\Gamma^d\Mat(n,R)$-modules. The same argument works in our context, we only need to check that the inverse functor maps $\cM_\mathrm{p}(R,n,d)$ to $\Pol_d(R)$ (that is, that the inverse functor preserves `finite type and projective').

The inverse functor is defined as follows. Let $M$ be a $\Gamma^d\Mat(n,R)$-module. Then we define a functor
\[
	F_M\colon \Mod(R) \to \Mod(R),\, V \mapsto \Gamma^d(V^n) \otimes_{\Gamma^d\Mat(n,R)} M,
\]
where the right $\Gamma^d\Mat(n,R)$-module structure on $\Gamma^d(V^n)$ is inherited from the structure of right $\Mat(n,R)$-module on $V^n = \Hom(R^n,V)$. Formation of $F_M$ commutes with base change.

Now assume that both $M$ and $V$ are finitely generated and projective $R$-modules. Then the module $F_M(V)$ is also finitely generated. We claim that $F_M(V)$ is also projective. By Lemma~\ref{lemma:projective-schur-module} the module $\Gamma^d(V^n)$ is projective, hence a direct summand of a free $\Gamma^d\Mat(n,R)$-module $\oplus_I \Gamma^d\Mat(n,R)$, and hence $F_M(V)$ is a direct summand of a projective $R$-module $\oplus_I M$.
\end{proof}

\begin{rem}
\label{rmk:base-change}
Theorem~\ref{PolynomialSchur} gives an alternative way for producing the base change of a polynomial functor. If $R\to S$ is a map of commutative rings, and if $M$ is a $\Gamma^d\Mat(n,R)$-module, then the base change $M\otimes_R S$ is a $\Gamma^d\Mat(n,S)$-module,
since formation of $\Gamma^d\Mat(n,-)$ commutes with base change.
\end{rem}

\subsection{The Grothendieck group of polynomial functors over \texorpdfstring{$\mb{Z}$}{Z}\ }\label{Grothendieck groups over Z}

We fix $n$ and $d$ satisfying $n\geq d$. For brevity we write $\cM(R) := \cM(R,n,d)$ and $\cM_\mathrm{p}(R) := \cM_\mathrm{p}(R,n,d)$.
Furthermore we write $\bZ[s_1, s_2, \ldots]_d$ for the weighted degree $d$ part of the polynomial ring $\bZ[s_1,s_2,\ldots]$. It is equal to $\bZ[s_1, \ldots, s_n]_d$.

In this subsection, following \cite{Serre}, we compute the Grothendieck group $K_0(\cM_\mathrm{p}(\bZ))$. Together with Theorem~\ref{PolynomialSchur} this then implies Theorem~\ref{Computing Grothendieck group of PolFun}.

If $R$ is an integral domain, there is a natural homomorphism
\[
	K_0(\cM_\mathrm{p}(R))\;\;\stackrel{\textrm{Thm.~\ref{PolynomialSchur}}}{ = } \;\; K_0(\Pol_d(R)) \to \bZ[s_1,s_2,\ldots]_d
\]
given by sending a polynomial functor $F$ to the weights of the action of $\bG_m^n$ on $F(R^n)$.

\begin{thm}
\label{thm:K0-over-field}
For every field $K$ the map $K_0(\cM(K))\to \bZ[s_1,s_2,\ldots]_d$ is an isomorphism.
\end{thm}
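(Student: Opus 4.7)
The plan is to realise the displayed map as a classical character map and then to sandwich it between an easy surjectivity argument (built from exterior powers) and an injectivity argument based on a rank count powered by Green's classification of simple Schur-algebra modules.

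First I would unpack the map concretely. By Theorem~\ref{PolynomialSchur} an object $M \in \cM(K)$ corresponds to a polynomial functor $F \in \Pol_d(K)$, which by evaluation on $K^n$ becomes a degree-$d$ polynomial representation of $\GL_{n,K}$. Restricting to the diagonal torus $T = \bG_m^n$ gives a weight decomposition $M = \bigoplus_\alpha M_\alpha$ indexed by $\alpha \in \bZ^n_{\geq 0}$ with $|\alpha|=d$ by homogeneity. The character $\mathrm{ch}(M) \defeq \sum_\alpha (\dim_K M_\alpha)\, x^\alpha$ is $S_n$-invariant because the full $\GL_n$ acts, hence lies in $\bZ[x_1,\ldots,x_n]^{S_n}_d = \bZ[s_1,\ldots,s_n]_d = \bZ[s_1,s_2,\ldots]_d$ (the last equality using $n \ge d$). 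Additivity on short exact sequences is immediate, so $\mathrm{ch}$ is the homomorphism of the theorem. For surjectivity, a direct weight count gives $\mathrm{ch}(\Lambda^i K^n) = s_i$, and multiplicativity of characters under tensor products yields
\[
    \mathrm{ch}\bigl(\Lambda^{\lambda_1}K^n \otimes \cdots \otimes \Lambda^{\lambda_r}K^n\bigr) = s_{\lambda_1}\cdots s_{\lambda_r}
\]
for every partition $\lambda = (\lambda_1 \ge \cdots \ge \lambda_r)$ of $d$ with $\lambda_i \le n$; these products range over a $\bZ$-basis of $\bZ[s_1,s_2,\ldots]_d$ (as $s_1,s_2,\ldots$ are algebraically independent generators of weighted degrees $1,2,\ldots$), so $\mathrm{ch}$ hits every basis element.

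For injectivity I would compare ranks. The Schur algebra $\Gamma^d\Mat(n,K)$ is a finite-dimensional $K$-algebra, so $K_0(\cM(K))$ is free abelian on the isomorphism classes of simple modules. By Green's theorem, these simples are parametrised by partitions of $d$ with at most $n$ parts; since $n \ge d$, this count equals $p(d)$, matching the rank of $\bZ[s_1,s_2,\ldots]_d$, whose monomial basis is likewise indexed by partitions of $d$. A surjection between free abelian groups of equal finite rank is automatically an isomorphism, which finishes the argument.

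The main obstacle—or rather the serious input—is Green's classification of the simples of the Schur algebra, which is precisely what guarantees that $K_0(\cM(K))$ has the expected rank. An alternative route, which repackages the same content but makes injectivity more transparent, would use the Weyl modules $\Delta(\lambda)$: their characters are the Schur polynomials $\{s_\lambda\}$, a $\bZ$-basis of $\bZ[s_1,\ldots,s_n]_d$, and unitriangularity of the decomposition matrix $\bigl([\Delta(\lambda):L(\mu)]\bigr)$ shows that $\{[\Delta(\lambda)]\}$ is a $\bZ$-basis of $K_0(\cM(K))$ mapped bijectively onto a basis of the target.
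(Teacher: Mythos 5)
Your proof is correct and is in substance exactly what the paper delegates to Green's book (the cited passages in \cite{Gr} prove precisely this: surjectivity of the character map via $\Lambda^{\lambda_1}\otimes\cdots\otimes\Lambda^{\lambda_r}$, and injectivity via the parametrisation of simple Schur-algebra modules by partitions, or equivalently via the Weyl-module basis). One small point worth stating explicitly: the paper's proof flags that Green assumes $K$ infinite, which requires replacing representations of the abstract monoid $\Mat(n,K)$ by representations of the monoid \emph{scheme} $\Mat_{n,K}$ when $K$ is finite; your formulation entirely in terms of $\Gamma^d\Mat(n,K)$-modules sidesteps this issue cleanly, but you should say so rather than leave it implicit. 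Also watch that you use $s_\lambda$ once for the product $s_{\lambda_1}\cdots s_{\lambda_r}$ (the basis element you hit by surjectivity) and once for the Schur polynomial (the character of the Weyl module $\Delta(\lambda)$) — both claims are true and both give bases, but the paper's notation reserves $s_i$ for the elementary symmetric function, so the clash should be disambiguated.
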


\begin{proof} See \cite[2.2 \& 3.5, esp.~Remark (ii)]{Gr}. The book \cite{Gr} assumes the field $K$ to be infinite, but this assumption is only used in relating modules over $\Gamma^d\Mat(n,K)$ to representations of the monoid $\Mat(n,K)$, as opposed to representations of the \emph{monoid scheme} $\Mat_{n,K}$, which would also work over a finite field $K$. See also \cite[II.A]{Jantzen}.
\end{proof}

We will deduce from the cases $K=\bQ$ and $K=\bF_p$ in this theorem that the map
\[
	K_0(\Pol_d(\bZ)) \to \bZ[s_1,s_2,\ldots]_d
\]
is an isomorphism. The proof is essentially identical to Serre's proof that
\[
	K_0(\GL_{n,\bZ}) \to K_0(\GL_{n,\bQ})
\]
is an isomorphism, see \cite{Serre}.

\begin{lem}[Projective resolutions]\label{lemma:projective-resolution}
The canonical map
\[
	K_0(\cM_\mathrm{p}(\bZ))\to K_0(\cM(\bZ))
\]
is an isomorphism.
\end{lem}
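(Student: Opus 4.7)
The plan is to construct, for every object of $\mathcal{M}(\bZ)$, a short resolution by objects of $\mathcal{M}_\mathrm{p}(\bZ)$ and then to apply Quillen's resolution theorem (at the level of $K_0$). The key algebraic input is that $\bZ$ is a PID and that the Schur algebra $A := \Gamma^d \mathrm{Mat}(n,\bZ)$ is free of finite rank as a $\bZ$-module (hence left-Noetherian, and every finitely generated free left $A$-module is $\bZ$-free).

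First I would construct the resolution. Given $M \in \mathcal{M}(\bZ)$, choose a finite generating set and form a surjection $\pi \colon F \twoheadrightarrow M$ from a finitely generated free left $A$-module $F$; then $F \in \mathcal{M}_\mathrm{p}(\bZ)$ because its underlying $\bZ$-module is free. Since $A$ is Noetherian, $K := \ker\pi$ is a finitely generated left $A$-module; since $K$ is a $\bZ$-submodule of the free $\bZ$-module $F$ and $\bZ$ is a PID, $K$ is itself $\bZ$-free, so $K \in \mathcal{M}_\mathrm{p}(\bZ)$. Hence every $M \in \mathcal{M}(\bZ)$ sits in a short exact sequence
\[
  0 \longrightarrow K \longrightarrow F \longrightarrow M \longrightarrow 0
\]
with $K, F \in \mathcal{M}_\mathrm{p}(\bZ)$.

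Next I would verify the two closure properties needed for the resolution theorem: (i) $\mathcal{M}_\mathrm{p}(\bZ)$ is closed under extensions in $\mathcal{M}(\bZ)$, since an extension of two $\bZ$-projective modules is $\bZ$-projective; and (ii) if $0 \to M' \to M \to M'' \to 0$ is exact in $\mathcal{M}(\bZ)$ with $M, M'' \in \mathcal{M}_\mathrm{p}(\bZ)$, then $M' \in \mathcal{M}_\mathrm{p}(\bZ)$ as well, because $M'$ is a finitely generated $\bZ$-submodule of the $\bZ$-free module $M$, hence $\bZ$-free.

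Finally I would conclude. By Quillen's resolution theorem (see \cite{Quil1}), the three inputs above---the existence of finite $\mathcal{M}_\mathrm{p}(\bZ)$-resolutions, together with closure of $\mathcal{M}_\mathrm{p}(\bZ)$ under extensions and under kernels of admissible epimorphisms from $\mathcal{M}_\mathrm{p}(\bZ)$---imply that the inclusion induces an isomorphism $K_0(\mathcal{M}_\mathrm{p}(\bZ)) \xrightarrow{\ \sim\ } K_0(\mathcal{M}(\bZ))$. At the level of $K_0$ this can also be seen directly: surjectivity comes from $[M] = [F] - [K]$, and injectivity from the fact that the assignment $[M] \mapsto [F] - [K]$ is well-defined on $K_0(\mathcal{M}(\bZ))$, as different choices of resolution differ by short exact sequences that can be compared termwise in $\mathcal{M}_\mathrm{p}(\bZ)$. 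There is essentially no obstacle here beyond the observation that $\bZ$ is a PID and $A$ is $\bZ$-free; the result would fail (or require a deeper input) over a base ring for which $A$ had infinite global dimension relative to its base.
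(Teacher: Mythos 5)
Your proof is correct and is essentially the paper's argument: both take the two-term presentation $0 \to K \to F \to M \to 0$ with $F$ a finitely generated free $\Gamma^d\Mat(n,\bZ)$-module, observe that $F$ and $K$ are $\bZ$-free because the Schur algebra is $\bZ$-free and $\bZ$ is a PID (the paper phrases this as ``torsion-free, hence projective''), and then make $M \mapsto [F]-[K]$ into a well-defined two-sided inverse. The paper invokes Serre's Proposition~4 for the well-definedness and inverse step, while you package the same mechanism as an instance of Quillen's resolution theorem after checking the closure conditions on $\cM_\mathrm{p}(\bZ)$; these are two wordings of one argument.
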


\begin{proof} (Compare \cite[\S 2.2 \& \S 2.3]{Serre}.) Let $M$ be a finitely generated module over $\Gamma^d\Mat(n,\bZ)$. If $M$ can be generated by $m$ elements, we obtain a presentation
\[
	0 \to P_0 \to P_1 \to M \to 0
\]
with $P_1=(\Gamma^d \Mat(n,\bZ))^m$. Both $P_1$ and $P_0$ are torsion-free, hence projective as $\bZ$-modules.
The same argument as \cite[Prop.~4]{Serre} shows that $[P_0]-[P_1] \in K_0(\cM_\mathrm{p}(\bZ))$ is independent of the choice of presentation, and that $M \mapsto [P_0]-[P_1]$ defines a two-sided inverse to the map of the proposition.
\end{proof}

\begin{lem}[Localization sequence]\label{lemma:localization-sequence}
The obvious sequence
\[
	\mathop{\oplus}_{ \ell \;\mathrm{ prime}}K_0(\cM(\bF_\ell)) \to K_0(\cM(\bZ)) \to K_0(\cM(\bQ)) \to 0
\]
is exact.
\end{lem}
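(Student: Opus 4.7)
The plan is to present the stated sequence as the localization sequence for $K_0$ of an abelian category relative to a Serre subcategory, following Serre's template \cite{Serre} for $\mathrm{GL}_{n,\bZ}$ verbatim. Because $\Gamma^d\Mat(n,\bZ)$ is a free, finitely generated $\bZ$-algebra, it is left Noetherian, so $\cM(\bZ)=\cM(\bZ,n,d)$ is an abelian category. Inside it, let $\cT$ be the full subcategory of modules whose underlying $\bZ$-module is torsion (equivalently, finite). This $\cT$ is closed under subobjects, quotients and extensions, hence a Serre subcategory.

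The next step is to identify the Serre quotient: I would show that base change $M\mapsto M\otimes_\bZ\bQ$ factors through an equivalence $\cM(\bZ)/\cT \xrightarrow{\sim} \cM(\bQ)$. Exactness of $-\otimes_\bZ\bQ$, together with the base change identity $\Gamma^d\Mat(n,\bZ)\otimes_\bZ\bQ=\Gamma^d\Mat(n,\bQ)$ from Remark~\ref{rmk:base-change}, gives the factorization and vanishing on $\cT$. Essential surjectivity comes from the existence of $\Gamma^d\Mat(n,\bZ)$-stable $\bZ$-lattices: given $N\in\cM(\bQ)$, pick any $\bZ$-lattice in a finite $\Gamma^d\Mat(n,\bQ)$-generating set of $N$ and take the $\Gamma^d\Mat(n,\bZ)$-submodule it generates, using Noetherianity to ensure finite generation. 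Fullness is the usual calculation of $\Hom$ in a Serre quotient, and faithfulness is immediate since the kernel of such a quotient map is torsion. Applying the standard $K_0$-localization sequence for an abelian category and a Serre subcategory (which is essentially formal, see e.g.\ Bass or Weibel) then yields exactness of
\[ K_0(\cT) \longrightarrow K_0(\cM(\bZ)) \longrightarrow K_0(\cM(\bQ)) \longrightarrow 0. \]

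It remains to identify $K_0(\cT)$ with $\bigoplus_\ell K_0(\cM(\bF_\ell))$. Every $M\in\cT$ splits canonically as $M=\bigoplus_\ell M_{(\ell)}$ into its $\ell$-primary parts, and writing $\cT_\ell$ for the Serre subcategory of $\ell$-primary objects this produces $K_0(\cT)=\bigoplus_\ell K_0(\cT_\ell)$. A dévissage using the finite filtration $M\supset \ell M\supset \ell^2 M\supset\cdots$ shows that every object of $\cT_\ell$ is a successive extension of modules annihilated by $\ell$, i.e.\ modules over $\Gamma^d\Mat(n,\bZ)/\ell=\Gamma^d\Mat(n,\bF_\ell)$, giving $K_0(\cT_\ell)\cong K_0(\cM(\bF_\ell))$. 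Substituting back yields the desired exact sequence, and one checks that the resulting map $\bigoplus_\ell K_0(\cM(\bF_\ell))\to K_0(\cM(\bZ))$ is the obvious restriction-of-scalars map.

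The main obstacle is the identification $\cM(\bZ)/\cT\simeq \cM(\bQ)$; once this equivalence is in hand, the rest is routine. The lattice-construction argument is the only genuinely non-formal point, and it reduces to the standard fact that a finitely generated module over a Noetherian $\bZ$-algebra which is finite over $\bZ$ always admits an integral model rationally isomorphic to any prescribed $\bQ$-form.
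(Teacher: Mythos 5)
Your proposal is correct and is essentially the same argument the paper uses: the paper cites Serre's Théorème 1 and isolates the single non-formal verification, namely the existence of a $\Gamma^d\Mat(n,\bZ)$-stable lattice $\Lambda$ with $\bQ\otimes_\bZ\Lambda = V$, constructed exactly as you do (take any $\bZ$-lattice $\Lambda_0$ and saturate by the integral Schur algebra). The Serre-quotient identification $\cM(\bZ)/\cT\simeq\cM(\bQ)$, the $K_0$-localization sequence, and the primary-decomposition-plus-dévissage identification of $K_0(\cT)$ with $\bigoplus_\ell K_0(\cM(\bF_\ell))$ that you spell out are precisely the ingredients of Serre's proof that the paper is invoking.
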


\begin{proof}
The argument is identical to \cite[Th\'eor\`eme 1]{Serre}. The main point is to verify that
every $\Gamma^d\Mat(n,\bQ)$-module $V$ of finite $\bQ$-dimension contains
a $\Gamma^d\Mat(n,\bZ)$-submodule $\Lambda$ with $\bQ\otimes_\bZ \Lambda = V$. To construct such $\Lambda$,
take an arbitrary sub-$\bZ$-module $\Lambda_0$ with $\bQ\otimes_\bZ \Lambda_0 = V$, and take
$\Lambda := \Gamma^d\Mat(n,\bZ) \Lambda_0$.
\end{proof}

\begin{lem}[Decomposition maps] For every prime $\ell$ there is a unique homomorphism
$d_\ell$ making the triangle
\[
\begin{tikzcd}
	K_0(\cM_p(\bZ)) \arrow{r} \arrow{d} & K_0(\cM(\bQ)) \arrow{ld}{d_\ell}\\
	K_0(\cM(\bF_\ell))
\end{tikzcd}
\]
commute.
\end{lem}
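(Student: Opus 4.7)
The plan is to first observe that the horizontal map $K_0(\cM_\mathrm{p}(\bZ)) \to K_0(\cM(\bQ))$ is surjective. Indeed, Lemma~\ref{lemma:projective-resolution} identifies $K_0(\cM_\mathrm{p}(\bZ))$ with $K_0(\cM(\bZ))$, and Lemma~\ref{lemma:localization-sequence} then supplies a surjection onto $K_0(\cM(\bQ))$. Surjectivity of the horizontal map immediately forces uniqueness of $d_\ell$, should it exist.

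For existence, I would first build a map at the level of projective modules, namely
\[ \tilde d_\ell \colon K_0(\cM_\mathrm{p}(\bZ)) \to K_0(\cM(\bF_\ell)), \quad [P] \mapsto [P \otimes_\bZ \bF_\ell]. \]
This is well-defined on $K_0$ because every $P \in \cM_\mathrm{p}(\bZ)$ is $\bZ$-flat (being $\bZ$-projective), so short exact sequences in $\cM_\mathrm{p}(\bZ)$ remain short exact after $- \otimes_\bZ \bF_\ell$ and yield relations in $K_0(\cM(\bF_\ell))$.

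The key step is to show that $\tilde d_\ell$ factors through $K_0(\cM(\bQ))$, i.e., that it vanishes on the kernel $K$ of $K_0(\cM_\mathrm{p}(\bZ)) \twoheadrightarrow K_0(\cM(\bQ))$. Combining the previous two lemmas, $K$ is generated by elements of the form $[P_1] - [P_0]$ arising from a projective presentation $0 \to P_0 \to P_1 \to M \to 0$ of some $M \in \cM(\bF_p)$ for some prime $p$. For such a presentation, the long exact sequence for $\mathrm{Tor}^\bZ_*(-, \bF_\ell)$ collapses (since $P_0, P_1$ are $\bZ$-flat) to
\[ 0 \to \mathrm{Tor}_1^\bZ(M, \bF_\ell) \to P_0 \otimes \bF_\ell \to P_1 \otimes \bF_\ell \to M \otimes \bF_\ell \to 0. \]
If $p \neq \ell$, then multiplication by $\ell$ is an isomorphism on the $p$-torsion module $M$, so both outer terms vanish and $[P_0 \otimes \bF_\ell] = [P_1 \otimes \bF_\ell]$. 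If $p = \ell$, then $\mathrm{Tor}_1^\bZ(M, \bF_\ell) \cong M \cong M \otimes \bF_\ell$ as $\Gamma^d \mathrm{Mat}(n, \bF_\ell)$-modules, so the alternating sum in $K_0(\cM(\bF_\ell))$ gives $[M] - [P_0 \otimes \bF_\ell] + [P_1 \otimes \bF_\ell] - [M] = 0$, again yielding $[P_0 \otimes \bF_\ell] = [P_1 \otimes \bF_\ell]$.

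In both cases $\tilde d_\ell([P_1] - [P_0]) = 0$, so $\tilde d_\ell$ descends to the desired map $d_\ell$. The only real subtlety is the case $p = \ell$, where one must recognize both the kernel $M$ and the cokernel $M$ of the four-term sequence as $\Gamma^d\mathrm{Mat}(n, \bF_\ell)$-modules (since they acquire their module structures from $M$ itself, viewed through the quotient $\Gamma^d \mathrm{Mat}(n,\bZ) \twoheadrightarrow \Gamma^d \mathrm{Mat}(n, \bF_\ell)$); this is automatic from functoriality of the Tor construction.
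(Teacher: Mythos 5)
Your argument is correct, and it is organized differently from the proof the paper invokes. The paper simply cites Serre's Th\'eor\`eme~2, whose standard argument is the ``lattice'' construction: for a $\Gamma^d\Mat(n,\bQ)$-module $V$ one chooses a $\Gamma^d\Mat(n,\bZ)$-stable lattice $\Lambda\subseteq V$, sets $d_\ell([V]):=[\Lambda\otimes\bF_\ell]$, and then proves independence of the choice of $\Lambda$ (by reducing to nested lattices $\ell\Lambda'\subseteq\Lambda\subseteq\Lambda'$ and chasing exact sequences). You instead start from the base-change map $K_0(\cM_\mathrm{p}(\bZ))\to K_0(\cM(\bF_\ell))$ and show it factors through $K_0(\cM(\bQ))$, deriving uniqueness for free from the surjectivity supplied by Lemmas~\ref{lemma:projective-resolution} and~\ref{lemma:localization-sequence}, and deriving existence by a direct $\mathrm{Tor}^\bZ_*(-,\bF_\ell)$ computation on the generators $[P_1]-[P_0]$ of the kernel. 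Your route leans more heavily on the already-established localization sequence, which lets you bypass the ``independence of lattice'' step; Serre's route is a direct construction that does not presuppose Th\'eor\`eme~1. Two small remarks: the well-definedness of $d_\ell$ along Serre's lines is essentially equivalent to your kernel-vanishing computation (two projective presentations $P_0\rightarrowtail P_1$ of a torsion module are exactly two lattices in the same $\bQ$-module), so the arguments are close cousins; and your observation about the $\Gamma^d\Mat(n,\bF_\ell)$-module structure on the $\mathrm{Tor}$ terms in the case $p=\ell$ is indeed the one point that needs care, and you handle it correctly.
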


\begin{proof}The argument is identical to \cite[Thm~2]{Serre}.
\end{proof}

\begin{lem}
\label{lemma:composite-zero}
The composition
\[
	K_0(\cM(\bQ)) \overset{d_\ell}{\longto} K_0(\cM(\bF_\ell))
	\longto K_0(\cM(\bZ))
\]
is the zero map.
\end{lem}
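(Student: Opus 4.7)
The plan is to follow Serre's argument from \cite[\S 2]{Serre} essentially verbatim, adapted from $\GL_{n,\bZ}$-representations to modules over the Schur algebra $\Gamma^d \Mat(n,\bZ)$. The key observation is that the decomposition map $d_\ell$ admits a concrete description in terms of lattices, and the resulting $\bF_\ell$-module, when viewed back in $K_0(\cM(\bZ))$, becomes a difference of two equal classes.

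First I would unwind the construction of $d_\ell$. Given a class $[V] \in K_0(\cM(\bQ))$ with $V$ a finite-dimensional $\Gamma^d\Mat(n,\bQ)$-module, by the argument in the proof of Lemma~\ref{lemma:localization-sequence} one can choose a $\Gamma^d\Mat(n,\bZ)$-stable lattice $\Lambda \subset V$ with $\bQ \otimes_\bZ \Lambda = V$. Such a $\Lambda$ is then finitely generated and torsion-free over $\bZ$, hence an object of $\cM_\mathrm{p}(\bZ)$. Tracing through the definition of $d_\ell$ via the diagram and Lemma~\ref{lemma:projective-resolution}, one finds that
\[
    d_\ell([V]) = [\Lambda / \ell \Lambda] \in K_0(\cM(\bF_\ell)),
\]
where $\Lambda/\ell\Lambda$ is given its induced $\Gamma^d\Mat(n,\bF_\ell) = \Gamma^d\Mat(n,\bZ) \otimes_\bZ \bF_\ell$-module structure.

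Next I would push this class forward along the natural map $K_0(\cM(\bF_\ell)) \to K_0(\cM(\bZ))$, which sends an $\bF_\ell$-module to its underlying $\bZ$-module. The module $\Lambda/\ell\Lambda$ fits into the short exact sequence
\[
    0 \longrightarrow \Lambda \stackrel{\ell}{\longrightarrow} \Lambda \longrightarrow \Lambda/\ell\Lambda \longrightarrow 0
\]
in $\cM(\bZ)$ (multiplication by $\ell$ is $\Gamma^d\Mat(n,\bZ)$-linear since the action factors through an $R$-linear structure for $R=\bZ$). In $K_0(\cM(\bZ))$ this gives $[\Lambda/\ell\Lambda] = [\Lambda] - [\Lambda] = 0$, which is exactly what is required.

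The only non-formal point is well-definedness: one must check that the value $[\Lambda/\ell\Lambda] \in K_0(\cM(\bF_\ell))$ is independent of the choice of lattice $\Lambda$, so that the formula for $d_\ell$ makes sense. This is standard Brauer-type reasoning: any two such lattices $\Lambda,\Lambda'$ can be connected by a chain of inclusions with quotients that are iterated extensions of common subquotients, after possibly scaling by powers of $\ell$. This is exactly the argument given in \cite[Thm.~2]{Serre} which applies here without change, the ambient $\bZ$-algebra $\Gamma^d\Mat(n,\bZ)$ playing the role of the group ring $\bZ[\GL_n]$. I expect this verification (together with setting up $d_\ell$ carefully) to be the only bookkeeping obstacle; the vanishing itself, once $d_\ell$ is described by lattices, is an immediate consequence of the short exact sequence above.
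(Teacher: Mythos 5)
Your argument is correct and is essentially the same as the paper's proof, which cites \cite[Lemme~4]{Serre} and records precisely the key point you use, namely that for a $\bZ$-torsion-free $\Gamma^d\Mat(n,\bZ)$-module $\Lambda$, multiplication by $\ell$ gives an isomorphism $\Lambda\to\ell\Lambda$ of $\Gamma^d\Mat(n,\bZ)$-modules, so that the short exact sequence $0\to\Lambda\to\Lambda\to\Lambda/\ell\Lambda\to 0$ forces $[\Lambda/\ell\Lambda]=0$ in $K_0(\cM(\bZ))$. You have simply spelled out the unwinding of $d_\ell$ via lattices and its well-definedness, which the paper delegates to Serre.
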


\begin{proof}See \cite[Lemme 4]{Serre}. Similarly to there, one uses that for every $\bZ$-torsion-free $\Gamma^d\Mat(n,\bZ)$-module $\Lambda$ the map $\Lambda \to \ell \Lambda$, $x \mapsto \ell x$, is an isomorphism of  $\Gamma^d\Mat(n,\bZ)$-modules.
\end{proof}

\begin{prop}
\label{prop:dec-iso}
For every prime $\ell$, the map $d_\ell$ is an isomorphism.
\end{prop}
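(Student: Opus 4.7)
The plan is to exploit that, by Theorem~\ref{thm:K0-over-field}, both $K_0(\cM(\bQ))$ and $K_0(\cM(\bF_\ell))$ are canonically isomorphic to the same abelian group $\bZ[s_1,s_2,\ldots]_d$ via the character map $\mathrm{ch}$ that records the multiplicities of the weights for the action of the diagonal torus $\bG_m^n \subset \Mat_{n,-}$. It therefore suffices to show that $\mathrm{ch} \circ d_\ell = \mathrm{ch}$; the proposition then follows immediately, since $d_\ell$ will correspond to the identity of $\bZ[s_1,s_2,\ldots]_d$ under the two character identifications.

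Given $V \in \cM(\bQ)$, choose a $\Gamma^d\Mat(n,\bZ)$-stable lattice $\Lambda \subset V$ as in the proof of Lemma~\ref{lemma:localization-sequence}. Being finitely generated and torsion-free over $\bZ$, the module $\Lambda$ is free as a $\bZ$-module. The diagonal embedding $\bG_m^n \hookrightarrow \Mat_{n,\bZ}$, followed by the canonical morphism $g \mapsto g^{\otimes d}$ from $\Mat_{n,\bZ}$ into $\Gamma^d\Mat_{n,\bZ}$, defines a morphism of monoid schemes over $\bZ$; composing with the $\Gamma^d\Mat(n,\bZ)$-action on $\Lambda$ yields a representation $\bG_m^n \to \mathrm{GL}(\Lambda)$ of the diagonalizable $\bZ$-group scheme $\bG_m^n$ on the free module $\Lambda$. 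By the standard correspondence between representations of a diagonalizable group scheme and gradings by its character lattice (see e.g.~\cite{Jantzen}), $\Lambda$ admits a canonical weight decomposition $\Lambda = \bigoplus_\chi \Lambda_\chi$ into free $\bZ$-summands, and this decomposition is preserved under arbitrary base change.

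Extending scalars along $\bZ \to \bQ$ and $\bZ \to \bF_\ell$ gives compatible weight decompositions $V = \bigoplus_\chi (\Lambda_\chi \otimes_\bZ \bQ)$ and $\Lambda/\ell\Lambda = \bigoplus_\chi (\Lambda_\chi \otimes_\bZ \bF_\ell)$; in both the multiplicity of the weight $\chi$ equals $\operatorname{rank}_\bZ \Lambda_\chi$. Hence $\mathrm{ch}(\Lambda/\ell\Lambda) = \mathrm{ch}(V)$, so $\mathrm{ch} \circ d_\ell = \mathrm{ch}$, which completes the argument. The main technical input is the integrality of the weight decomposition, which reduces to the standard representation theory of diagonalizable group schemes over $\bZ$; aside from this, the proof proceeds in parallel with Serre's corresponding $\mathrm{GL}_n$-argument in~\cite{Serre}.
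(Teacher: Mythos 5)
Your proof is correct, but it follows a slightly different path from the paper's. The paper's argument is more terse: by Theorem~\ref{thm:K0-over-field}, the classes of the functors $\Lambda^{d_1}\otimes\cdots\otimes\Lambda^{d_m}$ with $\sum d_i = d$ form a $\bZ$-basis of both $K_0(\cM(\bQ))$ and $K_0(\cM(\bF_\ell))$ (their images under the character map are the monomials $s_{d_1}\cdots s_{d_m}$, which span $\bZ[s_1,\dots]_d$), and since these functors are defined over $\bZ$ they furnish explicit $\Gamma^d\Mat(n,\bZ)$-lattices, whence $d_\ell$ carries the $\bQ$-basis to the $\bF_\ell$-basis. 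You instead avoid naming an explicit basis and show directly that $d_\ell$ is intertwined with the character isomorphisms, by observing that any stable lattice $\Lambda$ carries a $\bG_m^n$-representation (as a diagonalizable $\bZ$-group scheme) and therefore admits a weight decomposition into free $\bZ$-summands that is preserved under base change. This yields $\mathrm{ch}(\Lambda/\ell\Lambda)=\mathrm{ch}(V)$ for arbitrary $V$, not just the tensor-products-of-exterior-powers. The tradeoff: the paper's version is shorter, exploiting the fact that Green's basis already has $\bZ$-lifts; your version invokes a bit more structural input (the integral representation theory of $\bG_m^n$, cf.\ \cite{Jantzen}) but is arguably closer in spirit to Serre's original argument for $\GL_n$ and gives a somewhat more conceptual explanation of why the two character maps are compatible with $d_\ell$. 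Both are valid and both ultimately hinge on Theorem~\ref{thm:K0-over-field}.
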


\begin{proof}
By Theorem~\ref{thm:K0-over-field} the functors
\[
	\Lambda^{d_1} \otimes \Lambda^{d_2} \otimes \cdots \otimes \Lambda^{d_m}
\]
with $\sum d_i=d$ define a basis of $K_0(\cM(\bQ))$ and of $K_0(\cM(\bF_\ell))$. Since the map $d_\ell$ preserves this basis, it is an isomorphism.
\end{proof}

\begin{cor}\label{Grothendieck group integers equals Grothendieck group rationals}The canonical map
\[
	K_0(\cM(\bZ)) \to K_0(\cM(\bQ))
\]
is an isomorphism.
\end{cor}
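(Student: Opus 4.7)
The plan is to combine the three preceding lemmas/propositions---the localization sequence (Lemma~\ref{lemma:localization-sequence}), the vanishing of $i_\ell \circ d_\ell$ (Lemma~\ref{lemma:composite-zero}), and the fact that each $d_\ell$ is an isomorphism (Proposition~\ref{prop:dec-iso})---into a short diagram chase.

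\medskip

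\textbf{Surjectivity.} This is immediate from the rightmost arrow in the localization sequence of Lemma~\ref{lemma:localization-sequence} being surjective.

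\medskip

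\textbf{Injectivity.} Suppose $x \in K_0(\cM(\bZ))$ lies in the kernel of $K_0(\cM(\bZ))\to K_0(\cM(\bQ))$. By exactness at $K_0(\cM(\bZ))$, there exist finitely many primes $\ell_1, \dots, \ell_r$ and elements $y_{\ell_j} \in K_0(\cM(\bF_{\ell_j}))$ with $x = \sum_j i_{\ell_j}(y_{\ell_j})$, where $i_\ell \colon K_0(\cM(\bF_\ell)) \to K_0(\cM(\bZ))$ denotes the natural map. By Proposition~\ref{prop:dec-iso}, for each prime $\ell$ the decomposition map $d_\ell \colon K_0(\cM(\bQ)) \to K_0(\cM(\bF_\ell))$ is an isomorphism, hence surjective; so we may write $y_{\ell_j} = d_{\ell_j}(z_j)$ for some $z_j \in K_0(\cM(\bQ))$. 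Then
\[
    x \;=\; \sum_j i_{\ell_j}(d_{\ell_j}(z_j)) \;=\; 0,
\]
the last equality by Lemma~\ref{lemma:composite-zero}, which asserts exactly that $i_\ell \circ d_\ell = 0$.

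\medskip

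There is essentially no obstacle here: once the localization sequence, the existence and surjectivity of $d_\ell$, and the vanishing of $i_\ell \circ d_\ell$ are in place, the corollary follows by a one-line diagram chase, exactly as in Serre's argument for $\GL_{n,\bZ}$ \cite{Serre}. The real work has been done in establishing Proposition~\ref{prop:dec-iso} (via Green's theorem computing $K_0(\cM(K))$ over a field) and Lemma~\ref{lemma:localization-sequence} (the d\'evissage-style argument).
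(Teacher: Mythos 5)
Your argument is correct and is essentially the paper's: the paper first observes that Proposition~\ref{prop:dec-iso} (surjectivity of $d_\ell$) together with Lemma~\ref{lemma:composite-zero} forces each map $K_0(\cM(\bF_\ell))\to K_0(\cM(\bZ))$ to vanish, and then reads off the isomorphism from the localization sequence. You reach the same conclusion by chasing an element of the kernel rather than first noting that the maps themselves are zero, but this is merely an unfolding of the same reasoning.
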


\begin{proof}
By Proposition~\ref{prop:dec-iso} and Lemma~\ref{lemma:composite-zero}
the maps
\[
	K_0(\cM(\bF_\ell)) \to K_0(\cM(\bZ))
\]
are the zero maps. But then the localization sequence of Lemma~\ref{lemma:localization-sequence} shows that the map $K_0(\cM(\bZ)) \to K_0(\cM(\bQ))$ is an isomorphism.
\end{proof}

\begin{proof}[Proof of Theorem~\ref{Computing Grothendieck group of PolFun}] The degree $d$ part of the homomorphism
\[\bZ[s_1, s_2, \ldots ] \rightarrow K_0(\mathrm{Pol}_{< \infty}(\bZ)), \quad s_i \mapsto [\Lambda^i], \]
is obviously inverse to the composition of the isomorphisms
\[K_0(\mathrm{Pol}_d(\bZ)) \cong K_0(\cM_\mathrm{p}(\bZ)) \cong K_0(\cM(\bZ)) \cong K_0(\cM(\bQ)) \cong \bZ[s_1, s_2, \ldots ]_d\]
given by Theorem~\ref{PolynomialSchur}, Lemma~\ref{lemma:projective-resolution}, Corollary~\ref{Grothendieck group integers equals Grothendieck group rationals} and Theorem~\ref{thm:K0-over-field}, respectively, and is hence bijective and compatible with exterior power operations.
\end{proof}

\begin{rem}
The category $\Pol_{d}(R)$ is equivalent with the category of weight $d$ representations of the monoid $\Mat_{n,R}$, which forms a full subcategory of the category of representations of $\GL_{n,R}$. Rather than translating Serre's argument from the $\GL_n$ to the $\Mat_n$ context, one could also  deduce our result from Serre's. However, some care has to be taken because the right adjoint to the inclusion, mapping a $\GL_{n,\bZ}$-representation $V$ to the largest sub-representation that extends to $\Mat_{n,\bZ}$, is not exact, see \cite[II.A]{Jantzen}.
\end{rem}

\subsection{Proof of the final \texorpdfstring{$\lambda$}{lambda}-ring axiom}
\label{Proof of final axiom}

In this subsection, we derive from Corollary~\ref{PolynomialFunctorsFormLambdaRing} that, for every quasi-compact scheme $X$, the pre-$\lambda$-ring $K_*(X)$ satisfies the final $\lambda$-ring axiom. Together with Theorem~\ref{lambda axiom 2}, this implies that $K_*(X)$ is a $\lambda$-ring and finishes the proof of this paper's main result.

\begin{thm}
\label{lambda axiom 3}
The ring $K_*(X)$ equipped with the exterior power operations defined in
section~\ref{Lambda axiom 4} satisfies axiom~(3) of a $\lambda$-ring.
\end{thm}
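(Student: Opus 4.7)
The plan is to derive axiom~(3) from the corresponding universal identity in $K_0(\Pol_{<\infty}(\bZ))$ supplied by Corollary~\ref{PolynomialFunctorsFormLambdaRing}, transporting it to $K_n(X)$ through polynomial functors and the iterated Dold--Kan construction. By a standard reduction (see the roadmap at the end of subsection~\ref{Proof of final axiom} in the introduction), it suffices to verify axiom~(3) on an additive generating set: for $x \in K_0(X)$ it is classical, and the product structure on $K_*(X)$ kills any mixed contributions coming from positive-degree elements. We are therefore reduced to $x = [P]$ with $P \in (B^{\rm{q}}_{\rm{b}})^n \mc{P}(X)$ and $n > 0$, in which case every monomial of total degree~$\ge 2$ in $P_{r,s}(\lambda^1[P], \dots, \lambda^{rs}[P])$ vanishes in $K_*(X)$ for the same reason.

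By Corollary~\ref{PolynomialFunctorsFormLambdaRing} the universal identity
\[
    [\Lambda^r \circ \Lambda^s] = P_{r,s}([\Lambda^1], \dots, [\Lambda^{rs}])
\]
holds in $K_0(\Pol_{<\infty}(\bZ))$; it is therefore witnessed by a finite collection of short exact sequences $0 \to F'_j \to F_j \to F''_j \to 0$ of polynomial functors over~$\bZ$ whose alternating Euler characteristic produces it. Pulling these back along the structure morphism $X \to \operatorname{Spec}(\bZ)$ via the exact base-change functor $f^*$ of subsection~\ref{PolynomialFunctor}, which commutes with $\otimes$ and $\Lambda^i$, yields analogous short exact sequences of polynomial functors over~$X$.

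Every polynomial functor $F$ of bounded degree over $X$ satisfies the hypotheses of Corollary~\ref{operations restrict} and so induces an endofunctor $F_n$ of $(B^{\rm{q}}_{\rm{b}})^n \mc{P}(X)$. A straightforward induction on $n$, paralleling the construction of the $\wedge_n$-filtration in the proof of Theorem~\ref{main theorem for schemes} and relying on the exactness of $N$ and $\Gamma$, shows that any short exact sequence $0 \to F' \to F \to F'' \to 0$ of polynomial functors lifts, upon evaluation at any object of $(B^{\rm{q}}_{\rm{b}})^n \mc{P}(X)$, to a short exact sequence in $(B^{\rm{q}}_{\rm{b}})^n \mc{P}(X)$. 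Summing the resulting Euler-type relations at $P$ yields an identity
\[
    [(\Lambda^r \circ \Lambda^s)_n(P)] = \sum_\alpha \varepsilon_\alpha [(M_\alpha)_n(P)] \quad \text{in } K_n(X),
\]
where each $M_\alpha = \Lambda^{i_1} \otimes \cdots \otimes \Lambda^{i_k}$ is a tensor product of exterior powers.

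To match this with axiom~(3), we use two natural isomorphisms arising from the Dold--Kan equivalence $\Gamma N \cong \operatorname{id}$: propagated through the iterative definitions it gives $(\Lambda^r \circ \Lambda^s)_n(P) \cong \Lambda^r_n(\Lambda^s_n(P))$, so that the left-hand side represents $\lambda^r\lambda^s[P]$; and it identifies $[(F \otimes G)_n(P)]$ with $[F_n(P) \otimes_{\Delta,n} G_n(P)]$ in $K_n(X)$ for any pair $F, G$ of polynomial functors of bounded degree. Granting these, every $[(M_\alpha)_n(P)]$ with $k \ge 2$ is the class of an iterated simplicial tensor product and vanishes in $K_n(X)$ by Proposition~\ref{tensor product vanishes}, while the single-factor contributions aggregate exactly to the value of $P_{r,s}(\lambda^1[P], \dots, \lambda^{rs}[P])$ in $K_*(X)$. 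The main obstacle is the rigorous verification of these two compatibility identifications; they demand careful recursive bookkeeping of the natural isomorphisms across the $n$-fold Dold--Kan construction and across the two differentials in each direction of a binary multicomplex, but no homotopy-theoretic input beyond what the Dold--Kan equivalence already provides in each single direction.
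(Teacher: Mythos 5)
Your proof follows essentially the same route as the paper: reduce to $x=[P_\cdot]$ a class in $K_0\bigl((B^{\rm q}_{\rm b})^n\mc{P}(X)\bigr)$, import the universal identity $[\Lambda^r\circ\Lambda^s]=P_{r,s}([\Lambda^1],\dots,[\Lambda^{rs}])$ from $K_0(\Pol_{<\infty}(\bZ))$ (Corollary~\ref{PolynomialFunctorsFormLambdaRing}), transport it through base change and the iterated Dold--Kan construction, and kill the mixed tensor terms by Proposition~\ref{tensor product vanishes}; the paper merely packages the transport step more tightly, as a single pre-$\lambda$-ring homomorphism $K_0(\Pol^0_{<\infty}(\bZ))\to K_0\bigl(\End((B^{\rm q}_{\rm b})^n\mc{P}(X))\bigr)$ followed by evaluation at $P_\cdot$, which subsumes exactly the two compatibility identifications you flag as the remaining obstacle. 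Two small corrections to align with the paper: over non-affine $X$ the category $\proj(X)$ is not split exact, so Corollary~\ref{operations restrict} does not apply directly and you must argue locally as in Proposition~\ref{schemes restrict}; and since the Dold--Kan construction requires $F(0)=0$ you should first pass to the split subcategory $\Pol^0_{<\infty}(\bZ)$ of functors killing $0$, as the paper does.
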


\begin{proof}
Let $r,s \ge 1$ and $n\ge 0$. For every $x \in K_n(X)$ we want to show that the identity
\begin{equation}\label{Axiom5}
\lambda^r(\lambda^s(x)) = P_{r,s}(\lambda^1(x), \ldots, \lambda^{rs}(x))
\end{equation}
holds in $K_n(X)$. We recall that, if $n \ge 1$, all products occurring on the right-hand side of (\ref{Axiom5}) are trivial (and hence that the right-hand side of~(\ref{Axiom5}) happens to be just a multiple of $\lambda^{rs}(x)$). We will show the stronger statement that the identity~(\ref{Axiom5}) in fact holds in $K_0\left((B_\mathrm{b}^\mathrm{q})^n \mc{P}(X)\right)$ for all $x \in K_0\left((B^\mathrm{q}_\mathrm{b})^n \mc{P}(X)\right)$. Now the products occurring on the right-hand side of~(\ref{Axiom5}) are induced by the simplicial tensor product introduced in section~\ref{Simplicial tensor products}; these products become trivial in $K_n(X)$ by Proposition~\ref{tensor product vanishes}. By a standard argument (see the proof of Theorem~\ref{lambda axiom 2}) we may assume that $x$ is the class of an object $P_\cdot$ of $(B_\mathrm{b}^\mathrm{q})^n \mc{P}(X)$. One easily checks, for instance using the Gabriel--Quillen embedding theorem \cite[A.7.1, A.7.16]{Thomason}, that for every exact category $\mathcal{P}$ and any skeletally small category $\mathcal I$, the category of functors from $\mathcal I$ to $\mathcal P$ is again an exact category in the obvious way. In particular, the category $\mathrm{End}\left((B_\mathrm{b}^\mathrm{q})^n \mc{P}(X)\right)$ of endo-functors of $(B_\mathrm{b}^\mathrm{q})^n \mc{P}(X)$ is an exact category. Furthermore it carries a tensor product and exterior power operations (given by $F \mapsto \Lambda^d_n \circ F$). Via the homomorphism $K_0\left( \mathrm{End}\left((B_\mathrm{b}^\mathrm{q})^n \mc{P}(X)\right)\right) \rightarrow K_0 \left((B_\mathrm{b}^\mathrm{q})^n \mc{P}(X)\right)$ given by $F \mapsto F(P_\cdot)$, the desired identity now follows from the even stronger identity
\begin{equation}\label{Axiom5strong}
[\Lambda^r_n \circ \Lambda^s_n] = P_{r,s}([\Lambda_n^1], \ldots, [\Lambda_n^{rs}])
\end{equation}
in $K_0\left( \mathrm{End}\left((B_\mathrm{b}^\mathrm{q})^n\mc{P}(X) \right)\right)$, which we now prove. We remember that the identity~(\ref{Axiom5strong}) (with the subscripts~$n$ omitted) holds in the Grothendieck group $K_0 \left(\mathrm{Pol}_{< \infty}(\bZ)\right)$ by Corollary~\ref{PolynomialFunctorsFormLambdaRing}. Then it also holds in $K_0\left( \mathrm{Pol}_{< \infty}^0(\bZ)\right)$ where $\mathrm{Pol}_{< \infty}^0(\bZ)$ denotes the full subcategory of $\mathrm{Pol}_{< \infty}(\bZ)$ consisting of functors $F$ satisfying $F(0)=0$; this follows from the fact that the canonical inclusion $\mathrm{Pol}^0_{< \infty}(\bZ) \hookrightarrow \mathrm{Pol}_{< \infty}(\bZ)$ is split by $F \mapsto (V \mapsto \mathrm{ker}(F(V) \rightarrow F(0)))$. The identity~(\ref{Axiom5strong}) therefore follows from Corollary~\ref{PolynomialFunctorsFormLambdaRing} once we have shown that we have a pre-$\lambda$-ring homomorphism
\begin{equation}\label{homom}
K_0 \left(\mathrm{Pol}^0_{< \infty}(\bZ)\right) \rightarrow K_0 \left(\mathrm{End}\left((B^\mathrm{q}_\mathrm{b})^n \mc{P}(X) \right)\right)
\end{equation}
that sends the class of the identity functor to the class of the identity functor.
By base change (see subsection~\ref{PolynomialFunctor}), every functor in $\mathrm{Pol}(\bZ)$ induces a functor in $\mathrm{Pol}(U)$ for every open subset $U$ of $X$ and this construction is compatible with restriction with respect to any inclusion of open subsets of~$X$. The constructions of sections~\ref{Operations on acyclic complexes}, \ref{Operations on binary multicomplexes} and \ref{proof of main theorem} therefore inductively induce a functor
\[\mathrm{Pol}^0_{< \infty}(\bZ) \rightarrow \mathrm{End}\left((B_\mathrm{b}^\mathrm{q})^n \mc{P}(X)\right);\]
this functor is exact and compatible with tensor products and exterior power operations as one easily verifies by induction on $n$. Thus it induces the desired homomorphism~(\ref{homom}) and the proof of Theorem~\ref{lambda axiom 3} is complete.
\end{proof}

\begin{rem}\label{remark} We have seen in the previous proof that the $\lambda$-ring axiom~(3)
already holds in $K_0((B_\mathrm{b}^\mathrm{q})^n \proj (X))$,
i.e.\@, before dividing out the subgroup generated by classes of diagonal multicomplexes.
The same holds true for the $\lambda$-ring axiom (2).
This can be shown similarly by using Corollary~\ref{PolynomialFunctorsFormLambdaRing} or by using the characteristic-free Cauchy decomposition as constructed in \cite{ABW}.
Whereas Corollary~\ref{PolynomialFunctorsFormLambdaRing} only proves the existence of short exact sequences, Akin, Buchsbaum and Weyman {\em explicitly} construct short exact sequences in \cite{ABW} that prove axiom~(2) of a $\lambda$-ring. The problem of explicitly describing  short exact sequences of polynomial functors that prove axiom~(3) seems however to be even harder than the famous and related plethysm problem in representation theory. Such explicit short exact sequences for the plethysm $\Lambda^2 \circ \Lambda^2$ can be found on page 175 of the paper \cite{AB85} by Akin and Buchsbaum. Although there also exist solutions of the classical plethysm problem for $\Lambda^r \circ \Lambda^2$ and $\Lambda^2 \circ \Lambda^s$, we are not aware of any corresponding characteristic-free short exact sequences.
\end{rem}

\appendix

\section{Proof of Lemma~\ref{iterate lemma}}
\label{acyclic app}

In this appendix we prove Lemma~\ref{iterate lemma}, which states that:
\begin{enumerate}
 \item If $\mc{P}$ is an idempotent complete exact category, then so are $C^n\mc{P}$ and $(C^{\rm{q}})^n\mc{P}$.
 \item If $\mc{P}$ is a split exact category, then so is $(C^{\rm{q}})^n\mc{P}$.
\end{enumerate}
Note that to prove each of these statements it is enough to prove the case $n = 1$.

\begin{proof}[Proof of Lemma~\ref{iterate lemma} (1)]
 Let $e \colon P_{\cd} \rightarrow P_{\cd}$ be an idempotent map of chain complexes.
 Then each map $e_n : P_n \rightarrow P_n$ is an idempotent of $\mc{P}$ and so has a kernel $\operatorname{ker}(e_n)$
 which is an object of $\mc{P}$.
 By the universal property of kernels, the chain map on $P_{\cd}$ induces a map
 $\operatorname{ker}(e_n) \rightarrow \operatorname{ker}(e_{n-1})$ for each $n$, and these assemble to form a chain complex of
 kernels. Thus every idempotent in $C\mc{P}$ has a kernel in $C\mc{P}$, so $C\mc{P}$ is idempotent complete.
 To show that $C^{\rm{q}}\mc{P}$ is idempotent complete as well, we must show
 that this kernel chain complex is acyclic in $\mc{P}$ if $P_{\cd}$ is.
 To do this, it suffices to consider the case when the complex is a short exact sequence;
 the general case then follows because $\mc{P}$ supports long exact sequences.\\
 If $P_{\cd} = (0 \rightarrow P_2 \rightarrow P_1 \rightarrow P_0 \rightarrow 0)$ and if
 $e \colon P_{\cd} \rightarrow P_{\cd}$ is an idempotent chain map, then $P_{\cd}$ is isomorphic to
 a sequence of the form
 \[
  0 \longrightarrow \ke (e_2) \oplus \im (e_2) \longrightarrow \ke (e_1) \oplus \im (e_1) \longrightarrow
  \ke (e_0) \oplus \im (e_0) \longrightarrow 0.
 \]
 Furthermore, as the morphisms in this short exact sequence commute with the idempotents
 $\left(\begin{smallmatrix} 0 & 0 \\ 0 & 1 \end{smallmatrix}\right)$, they split as direct sums of induced morphisms.
 Hence the sequence
 \[
  0 \longrightarrow \ke (e_2) \longrightarrow \ke (e_1) \longrightarrow
  \ke (e_0) \longrightarrow 0
 \]
 is exact as well.
\end{proof}

\begin{proof}[Proof of Lemma~\ref{iterate lemma} (2)]
  We wish to show that every admissible monomorphism $i \colon P_{\cd} \rightarrowtail Q_{\cd}$ in $C^{\rm{q}}\mc{P}$ is split;
 that is, that there exists a chain map $s \colon Q_{\cd} \rightarrow P_{\cd}$ such that each
 $s_ni_n \colon P_n \rightarrow Q_n$ is the identity.
 Let us restrict to the case in which $P_{\cd}$ and $Q_{\cd}$ are short exact sequences of $\mc{P}$.
 Consider the following diagram in a split exact category,
 \[ \xymatrix{
  P'   \ar@{ >->}[r]^{j_P} \ar@{ >->}[d]^{i'} &
  P    \ar@{->>}[r]^{q_P}  \ar@{ >->}[d]^i &
  P''  \ar@{ >->}[d]^{i''} \\
  Q'   \ar@{ >->}[r]^{j_Q} &
  Q    \ar@{->>}[r]^{q_Q}   &
  Q''  \\
  }
 \]
 and fix a splitting $s''$ for $i''$. We claim that there exist splittings $s'$ and $s$ of $i'$ and $i$ such that
 the resulting $s_{\cd} \colon Q_{\cd} \rightarrow P_{\cd}$ is a chain map (and hence a splitting of $i_{\cd}$).
 The general case follows from this claim. Indeed, since acyclic complexes are spliced together from short exact sequences,
 we construct a splitting for a monomorphism of acyclic complexes
 $i \colon P_{\cd} \rightarrowtail Q_{\cd}$ by splitting each monomorphism of short exact sequences separately.
 The part of the claim concerning a fixed splitting $s''$ allows us to choose these splittings of short exact sequences
 in a compatible manner (beginning in degree 0). So it is enough to prove the claim. \\
 We choose compatible splittings $h_P$ and $t_P$ of $j_P$ and $q_P$ respectively, i.e.\@,
 $j_Ph_P + t_Pq_P = 1$. We also choose a splitting $s_0$ for $i$ and now
 set $s = j_Ph_Ps_0 + t_Ps''q_Q$.
 Then we compute:
 \begin{enumerate}
  \item $si = j_Ph_Ps_0i + t_Ps''q_Qi = j_Ph_P + t_Ps''i''q_P = j_Ph_P + t_Pq_P = 1$,
  \item $q_Ps = q_Pj_Ph_Ps_0 + q_Pt_Ps''q_Q = s''q_Q$,
 \end{enumerate}
 so $s$ is a splitting for $i$ and $s, s''$ commute with $q_P, q_Q$. We therefore get an induced map of kernels
 $s' \colon Q' \rightarrow P'$ satisfying $j_Ps' = sj_Q$. Moreover
 $j_Ps'i' = sj_Qi' = sij_P = j_P$, and $j_P$ is monic, so $s'i' =1$.
\end{proof}

\bibliographystyle{amsalpha}

\end{document}